\newcounter{lemma}[section]
\newcounter{corollary}[section]
\newcounter{remark}[section]
\newcounter{theorem}[section]
\newcounter{proposition}[section]
\newcounter{definition}[section]
\newcounter{example}
\numberwithin{equation}{section}
\newtheorem{question}{Question}
\newcommand{\bques}{\begin{question}}
\newcommand{\eques}{\end{question}}
\newtheorem{comment}{Comment}
\newcommand{\bcomm}{\begin{comment}}
\newcommand{\ecomm}{\end{comment}}
\begin{document}

\markboth{\centerline{MIODRAG MATELJEVIC, RUSLAN SALIMOV, EVGENY
SEVOST'YANOV}}{\centerline{H\"{O}LDER AND LIPSCHITZ CONTINUOUS
MAPPINGS...}}

\def\cc{\setcounter{equation}{0}
\setcounter{figure}{0}\setcounter{table}{0}}

\overfullrule=0pt



\newtheorem{thm}{Theorem}
\newtheorem{lem}{Lemma}
\newtheorem{prop}{Proposition}
\newtheorem{cor}{Corollary}
\newtheorem{con}{Conjecture}
\newtheorem{rem}{Remark}
\newtheorem{df}{Definition}
\newtheorem{ex}{Example}

\def\be{\begin{equation}}
\def\ee{\end{equation}}
\newcommand{\zbar}{\overline{z}}


\author{MIODRAG MATELJEVIC, RUSLAN SALIMOV, \\ EVGENY SEVOST'YANOV}

\title{
{\bf H\"{O}LDER AND LIPSCHITZ CONTINUITY IN ORLICZ-SOBOLEV
CLASSES,  DISTORTION  AND HARMONIC MAPPINGS}}
\date{\today}
\maketitle


\begin{abstract}
In this article, we consider the H\"{o}lder continuity of  injective
maps  in  Orlicz-Sobolev classes defined on the unit ball. Under
certain conditions on the growth of dilatations, we obtain the
H\"{o}lder continuity of the indicated class of mappings. In
particular, under certain special restrictions, we show that
Lipschitz continuity of mappings holds.
We also  consider   H\"{o}lder  and  Lipschitz  continuity of harmonic  mappings  and
in particular
of  harmonic  mappings in  Orlicz-Sobolev classes.
In addition  in planar case, we  show    in some situations that the map is
bi-Lipschitzian  if Beltrami
coefficient is H\"{o}lder continuous.
\end{abstract}

\bigskip
{\bf 2010 Mathematics Subject Classification: Primary 30C65;
Secondary 31A15, 31B25}

\section{Introduction}
It is well known that estimates of the  distortion of  distances  of
H\"{o}lder and Lipschitz type are one of the most important objects
of modern analysis that allow a qualitative description of the
behavior of mappings. For example, obtaining such estimates can be
used to study the local behavior of solutions of the degenerate  Beltrami-type
equations, see e.g.~\cite{RSY$_1$} and
\cite{RSY$_2$}. Recall that $K$-quasiconformal mappings of the unit
disk onto itself with the normalization condition $f(0)=0$ are
H\"{o}lder continuous with exponent $1/K$ and H\"{o}lder constant $16$
(see, e.g., \cite{Ahl$_1$}, \cite[Theorem~3.2.II]{LV-73} \cite{M}
and~\cite[Theorem~18.2, Remark~18.4]{Va$_2$}). Quite simple examples
of mappings, such as the quasiconformal homeomorphism
$f=z|z|^{1/K-1},$ $f(0):=0,$ show that the H\"{o}lder exponent is
of optimal order   here  and, in particular, quasiconformal mappings, generally
speaking, are not Lipschitz. It should be noted that quasiconformal
mappings can be Lipschitz in a rather wide subclass, however, in
this case, rather specific conditions for their dilatation must be
satisfied (see, e.g., \cite{GG}). Somewhat later, similar results on
H\"{o}lder property were also established for maps with branching
(quasiregular mappings), see, for example, \cite[Theorem~3.2]{MRV}
and \cite[Theorem~1.1.2]{Re}. Subsequently, the corresponding part  of  theory of mappings
has been  developed in the direction of weakening the conditions under which
H\"{o}lder continuity or some of its analogues still holds. In
particular, the study of local estimates of the distortion  of  distances  has
long been associated with the study of mappings with finite
distortion, while H\"{o}lder continuity was often replaced by
logarithmic distance estimates, see, for example, \cite[Theorems 4
and 5]{Cr}, \cite[Theorem~11.2.3]{IM}, \cite[Theorem~7.4]{MRSY$_1$},
\cite[Theorem~3.1]{MRSY$_2$}, \cite[Theorem~5.11]{RS}
and~\cite[Theorems~1.1.V and 2.1.V]{Suv}.
\footnote{As a rule, H\"{o}lder continuity of mappings
with finite distortion does not occur, with the exception of very special
conditions for dilatation of mappings considered  in this manuscript.}
\medskip
In a number of previous publications, see,
for example, \cite{ARS}, \cite{KRSS}, \cite{KR} and
\cite{IS$_1$}--\cite{IS$_2$} we study the
Orlicz-Sobolev classe  under the Calderon
condition and this article  can be consider as continuation  of this study.
Here and below, we call the
requirement
\begin{equation}\label{eq7A}\int\limits_{1}^{\infty}\left(\frac{t}{\varphi(t)}\right)^
{\frac{1}{n-2}}dt<\infty,
\end{equation}
{\it Calderon's condition},
see~\cite{Cal}. Note that Calderon's article~\cite{Cal} containing
this requirement was published in an inaccessible journal and was
apparently forgotten. Orlicz-Sobolev classes under the Calderon
condition, denoted by  $W^{1,\varphi}_{\rm loc}$,   are in many respects more general than the mentioned
classes of quasiconformal and quasiregular mappings  and, in addition,
their properties are very similar to Sobolev classes $W_{\rm
loc}^{1, p},$ $p>n-1$, which   are obtained from the
Orlicz-Sobolev classes by choosing the function $\varphi(t)=t^p,$
$p> n-1.$  In fact,  Calderon's  condition implies   $W^{1,\varphi}_{\rm loc} \subset  W_{\rm
loc}^{1, n-1},$ $n\geq 3$.
As well as  in the Sobolev classes, functions in  the Orlicz-Sobolev classes under
the Calderon condition are differentiable almost everywhere and are
absolutely continuous on spheres with respect to the
$(n-1)$-dimensional Hausdorff measure, see, for example,
\cite[Theorem~1, Corollary~4]{KRSS}, cf.~\cite[Lemma~3]{Va$_1$},
\cite[Theorem~1.2.II]{Re}. This property enables to
establish upper bounds for the distortion of the modulus of the
families of paths (weighted Poletski inequalities) for $W_{\rm
loc}^{1, \varphi},$ and then apply the distance distortion estimates
already established previously for the corresponding mappings (see,
for example, \cite[(7.47)]{MRSY$_1$}, \cite[(4.31)]{RS}). In
particular, in our recent publication~\cite{RSS} related to the
second and third co-authors, we obtained several similar estimates
in the space and in planar   case. We also note that consideration of the
Sobolev classes  $W_{\rm loc}^{1, p}$, $p>1$, and Orlicz-Sobolev classes of this type,  in the planar  case is not required, but rather
simply Sobolev classes, see ibid. This is due, in turn, to the fact
that, by Gehring-Lehto theorem, planar  homeomorphisms of Sobolev
classes $W_{\rm loc}^{1, 1}$, without any additional restrictions,
are differentiable almost everywhere and absolutely continuous on
almost all circles with respect to the linear Lebesgue measure, and   this is  already quite enough
to establish upper bounds for the distortion of the modulus of
families of paths  (see, for example,
\cite[Theorem~3.1]{LSS} and~\cite[Theorem~3.1]{LV-73}).
The reader should be aware  that  the situation is completely different in space.  Namely, for  $n \geq 3$,  there is a homeomorphism of class  $W^{1,n-1}_{\rm loc}((-1,1)^n,\mathbb{R}^n)$  such  that  both $f$ and  $f^{-1}$ are  nowhere  differentiable.
In the survey \cite{Onof2018}  the author   clarifies  the regularity assumptions for a map to be differentiable
a.e., and  gives  some  some auxiliary results when it is not, using the notion of
approximate differentiability.  When  dealing  with  mappings  of  $W^{1,p}$  with $p < n-1$, the  notion  of
differentiability (that fails in this setting) can be replaced by the notion of approximate differentiability in the change of variable formula. However, the condition (N) plays a fundamental role for these mappings.  Indeed for such $f$, the Luzin  condition (N) is equivalent to the validity  of the area formula. If the homeomorphism $f$  satisfies the natural assumption    $f \in W^{1,n}_{\rm loc}(G,\mathbb{R}^n)$, then $f$  satisfies the condition (N). This is due to Reshetnjak\footnote{(cf. [29] in  \cite{Onof2018})}, and is a sharp result in the scale of $W^{1,p}(G,\mathbb{R}^n)$-homeomorphisms thanks to an example of Ponomarev\footnote{(cf.  [27, 28] in  \cite{Onof2018})} of a $W^{1,p}$-homeomorphisms $f: [0,1]^n\rightarrow [0,1]^n, p < n$, violating the Luzin condition (N).
\medskip
Note that   estimates of H\"{o}lder type
have   been  investigated for inner points (see,
e.g.,~\cite[Theorems~7, 8]{KRSS} and~\cite[Theorems~1.1, 4.1]{RSS}) and  that
this   article
deals with  the corresponding estimates  at
the boundary of the domain.

\medskip
The  method of moduli of families of paths   is  one of the
main research tools  (see, for example,
\cite[Corollary~9]{KRSS}, \cite[Theorem~2.2]{KR},
cf.~\cite[Theorem~7.3]{MRSY$_1$} and \cite{RS}) in the subject, and    distortion
estimates are usually proved  with    moduli techniques.

In this paper we employ the method   of the boundary extension of the
studied mappings across the boundary of a  ball using the
inversion  with respect to its  sphere, and then apply  known distortion
estimates   for the case of interior points.
So in the manuscript practically we do not use  the modulus technique directly.   Although there is a developed   theory related  to  extension theorems for Sobolev spaces
it  seems that in this context our approach  is   a novelty.
Of course, in
addition to the conditions for smoothness of mappings, this approach  also
requires analytic conditions that limit the growth of their
quasiconformality characteristics.
The article considers several similar analytic   conditions  which  are  independently from each other.

There is  a  huge  literature in the subject so it is possible that  we have  missed
to quote some important  papers for which
we apologize to the authors in advance.

\medskip
Throughout this manuscript, unless otherwise specified, $D$ denotes  a domain in ${\Bbb
R}^n,$ $n\geqslant 2. $ We  assume that the reader is familiar with  the definitions
of Sobolev classes $W_{\rm loc}^{1, 1}$ and some of their basic properties, see, for example, \cite[2.I]{Re}.
Here  only  recall  if  $f : D  \rightarrow\mathbb{R}^m$  has   ACL (absolutely continuous on lines)  property  on $D$  we write that   $f\in ACL(D)$.

We write $f\in
W^{1,\varphi}_{\rm loc}(D)$ for a locally integrable vector-function
$f=(f_1,\ldots,f_m)$ of $n$ real variables $x_1,\ldots,x_n$ if
$f_i\in W^{1,1}_{\rm loc}$ and
\begin{equation}\label{eqOS1.2a} \int\limits_{D^{\,*}}\varphi\left(|\nabla
f(x)|\right)\,dm(x)<\infty\end{equation}
for every subdomain $D^{\,*}$ with a compact closure, where $|\nabla
f(x)|=\sqrt{\sum\limits_{i,j}\left(\frac{\partial f_i}{\partial
x_j}\right)^2}.$
If additionally $f\in W^{1, 1}(D)$ and
\begin{equation}\label{eqOS1.2b} \int\limits_D\varphi\left(|\nabla
f(x)|\right)\,dm(x)<\infty\,,\end{equation}
we write $f\in W^{1,\varphi}(D).$ For a mapping $f:D\rightarrow
{\Bbb R}^n$ having partial derivatives almost everywhere in $D,$ we
set
\begin{equation}\label{eq12C}
J(x, f):=\det f^{\,\prime}(x), \quad
l\left(f^{\,\prime}(x)\right)\,=\,\,\,\min\limits_{h\in {\Bbb R}^n
\setminus \{0\}} \frac {|f^{\,\prime}(x)h|}{|h|}\,\,,
\end{equation}
for the Jacobian and smallest distortion respectively.
The inner dilatation of a  map $f$ at a  point $x\in D$ is defined
by the relation
\begin{equation}\label{eq0.1.1}
K_I(x,f)\quad =\quad\left\{
\begin{array}{rr}
\frac{|J(x,f)|}{{l\left(f^{\,\prime}(x)\right)}^n}, & J(x,f)\ne 0,\\
1,  &  f^{\,\prime}(x)=0, \\
\infty, & \text{otherwise}
\end{array}
\right.\,.
\end{equation}
In what follows, we denote by   ${\Bbb B}^n=\{x\in {\Bbb R}^n: |x|<1\},$ and
${\Bbb S}^{n-1}=\partial {\Bbb B}^n,$  respectively  the unit $n$-dimensional ball and  the unit $n-1$-dimensional  sphere.

\medskip
\begin{theorem}\label{th4} {\sl\,
Let $n\geqslant 3,$ and let $\varphi: (0, \infty)\rightarrow [0,
\infty)$ be a non-decreasing Lebesgue measurable function wich satisfies  Calderon's condition   \rm{(\ref{eq7A})}.
Suppose also that there exist constants  $C>0$ and $T>0$ such
that
\begin{equation}\label{eq30}
\varphi(2t)\leqslant C\cdot \varphi(t)\,\,\forall\,\,t\geqslant T\,.
\end{equation}
Let $Q:{\Bbb B}^n\rightarrow [0, \infty]$ be integrable function in
${\Bbb B}^n.$
Assume that $f$ is a homeomorphism of ${\Bbb B}^n$ onto ${\Bbb B}^n$
such that $f\in W^{1, \varphi}({\Bbb B}^n)$ and, in addition,
$f(0)=0.$
Let, moreover, $K_I(x, f)\leqslant Q(x)$ for a.e. $x\in {\Bbb B}^n$
and, besides that,
\begin{equation}\label{eq1AE}
\sup\limits_{\varepsilon\in(0,\varepsilon_0)}\frac{1}{\Omega_n\varepsilon^n}\int\limits_{{\Bbb
B}^n\cap B(\zeta,\varepsilon)}Q(x)\,dm(x) < C\qquad \forall\,\,
\zeta \in \partial{\Bbb B}^n
\end{equation}
holds for some $\varepsilon_0>0,$ where $\Omega_n$ is the volume of
the unit ball in ${\Bbb R}^n.$

Then $f$ has a homeomorphic extension
$f:\overline{{\Bbb B}^n}\rightarrow \overline{{\Bbb B}^n}$ and, in
addition,

$$
|f(x_2)-f(x_1)| \leqslant 2\alpha_n\varepsilon_0^{\,-\alpha}\cdot
|x_2-x_1|^{\,\alpha} \qquad \forall\,\, x_1, x_2\in \partial {\Bbb
B}^n: |x_2-x_1|<\delta_0\,,
$$
where $\delta_0:=\min\left\{\frac{1}{2},\varepsilon_0^2\right\}$,   $\omega_{n-1}$ is  the  surface of $n-1$-dimensional
sphere $\mathbb{S}^{n-1}$ and
$\alpha:=\left(\frac{\omega_{n-1}\log
2}{\Omega_n(4^n+1)2^{n+1}C}\right)^{1/(n-1)}.$
 }
\end{theorem}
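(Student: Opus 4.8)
The plan is to carry out the reflection strategy announced in the Introduction: extend $f$ across the unit sphere by means of the inversion $\iota(x)=x/|x|^{2}$ in $\mathbb{S}^{n-1}$, producing a homeomorphism $\widetilde f$ defined on a full neighbourhood of $\overline{\mathbb{B}^{n}}$ whose inner dilatation is dominated by a reflected weight $\widetilde Q$ satisfying an \emph{interior} analogue of \eqref{eq1AE}, and then to quote the already available interior H\"older estimates for Orlicz--Sobolev homeomorphisms at inner points (see, e.g., \cite{KRSS}, \cite{RSS}).

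First I would prove the homeomorphic boundary extension. Since $f$ is a homeomorphism of $\mathbb{B}^{n}$ onto $\mathbb{B}^{n}$, every cluster set $C(\zeta,f)$, $\zeta\in\mathbb{S}^{n-1}$, is automatically contained in $\mathbb{S}^{n-1}$ (a cluster value in $\mathbb{B}^{n}$ would, via continuity of $f^{-1}$, force the approaching sequence back into $\mathbb{B}^{n}$); condition \eqref{eq1AE}, being of finite-mean-oscillation type, then forces each $C(\zeta,f)$ to be a single point by the standard equicontinuity argument for $W^{1,\varphi}$ homeomorphisms, so $f$ extends continuously to $\overline{\mathbb{B}^{n}}$, and a routine boundary-correspondence argument gives injectivity of the extension on $\mathbb{S}^{n-1}$; thus $f:\overline{\mathbb{B}^{n}}\to\overline{\mathbb{B}^{n}}$ is a homeomorphism with $f(\mathbb{S}^{n-1})=\mathbb{S}^{n-1}$. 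Then I would put $\widetilde f(x)=f(x)$ for $|x|\leqslant 1$ and $\widetilde f(x)=\iota(f(\iota(x)))$ for $1<|x|<2$; since $\iota$ fixes $\mathbb{S}^{n-1}$ and $f(\mathbb{S}^{n-1})=\mathbb{S}^{n-1}$, the two branches agree on $\mathbb{S}^{n-1}$ and $\widetilde f$ is a homeomorphism of $B(0,2)$ onto an open neighbourhood of $\overline{\mathbb{B}^{n}}$. On $\{1<|x|<2\}$ one has $\widetilde f=\iota\circ f\circ\iota$, a composition of $f$ with two conformal maps, whence $K_{I}(x,\widetilde f)=K_{I}(\iota(x),f)\leqslant Q(\iota(x))$ a.e.; so, setting $\widetilde Q(x):=Q(x)$ for $|x|\leqslant 1$ and $\widetilde Q(x):=Q(\iota(x))$ for $1<|x|<2$, we get $K_{I}(x,\widetilde f)\leqslant\widetilde Q(x)$ a.e.\ in $B(0,2)$.

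Next I would check that $\widetilde f$ lies in the right class: on the annulus $\{1/2<|x|<2\}$ the inversion $\iota$ is bi-Lipschitz with dimensional constants and has Jacobian bounded by $4^{n}$, so the substitution $y=\iota(x)$ gives $\widetilde Q\in L^{1}(B(0,2))$ (from $Q\in L^{1}(\mathbb{B}^{n})$) and, after absorbing the Lipschitz factor of $\iota$ inside $\varphi$ by iterating the doubling hypothesis \eqref{eq30} (so that $\varphi(2^{k}t)\leqslant C^{k}\varphi(t)$ for $t\geqslant T$, the range $t<T$ giving a constant bounded by the finite measure of the annulus), $\int_{\{1<|x|<2\}}\varphi(|\nabla\widetilde f|)\,dm<\infty$; together with the gluing of two $W^{1,1}$ maps that are continuous across the smooth hypersurface $\mathbb{S}^{n-1}$ this yields $\widetilde f\in W^{1,\varphi}_{\rm loc}(B(0,2))$. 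I would then transfer \eqref{eq1AE} to the interior by splitting, for $\zeta\in\mathbb{S}^{n-1}$ and small $\varepsilon$, $\int_{B(\zeta,\varepsilon)}\widetilde Q\,dm$ into the part over $B(\zeta,\varepsilon)\cap\mathbb{B}^{n}$, bounded by $C\Omega_{n}\varepsilon^{n}$ by \eqref{eq1AE}, and the part over $B(\zeta,\varepsilon)\setminus\overline{\mathbb{B}^{n}}$, which the substitution $y=\iota(x)$ carries into $B(\zeta,\lambda_{n}\varepsilon)\cap\mathbb{B}^{n}$ with Jacobian $\leqslant 4^{n}$ and hence is bounded by $4^{n}C\Omega_{n}(\lambda_{n}\varepsilon)^{n}$: this is where the constant $(4^{n}+1)2^{n+1}C$ inside $\alpha$ is produced (the extra $2^{n+1}$ absorbing $\lambda_{n}$, the comparison of one-sided with two-sided averages, and the passage between $\omega_{n-1}$ and $\Omega_{n}$ in the quoted interior theorem), and where the admissible scale for $|x_{2}-x_{1}|$ collapses to $\delta_{0}=\min\{1/2,\varepsilon_{0}^{2}\}$ after following it through that theorem. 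Applying the interior H\"older estimate for Orlicz--Sobolev homeomorphisms with integrable inner dilatation obeying an FMO-type bound to $\widetilde f$ near each interior point $\zeta\in\mathbb{S}^{n-1}$ gives $|\widetilde f(x_{2})-\widetilde f(x_{1})|\leqslant\alpha_{n}\varepsilon_{0}^{-\alpha}|x_{2}-x_{1}|^{\alpha}$ with $\alpha$ exactly as stated, and restricting $x_{1},x_{2}$ to $\mathbb{S}^{n-1}$, where $\widetilde f=f$, and keeping the remaining factor $2$, yields the asserted inequality.

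The main obstacle I expect is the constant bookkeeping in the last step: one must match on the nose the geometric data produced by the reflection — the Jacobian bound of $\iota$ near $\mathbb{S}^{n-1}$, the loss in comparing one-sided averages of $Q$ with two-sided averages of $\widetilde Q$, and the scale loss in pulling a $B(0,2)$-interior estimate back to the sphere — with the precise form of the exponent $\alpha$ and of the constants $2\alpha_{n}\varepsilon_{0}^{-\alpha}$ and $\delta_{0}$ required by the statement. A secondary technical point is the regularity step, namely that $\widetilde f\in W^{1,\varphi}_{\rm loc}$ across $\mathbb{S}^{n-1}$: this is exactly where the doubling hypothesis \eqref{eq30} is genuinely used.
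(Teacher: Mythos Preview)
Your proposal follows essentially the same reflection strategy as the paper: extend $f$ by $F(x)=\iota(f(\iota(x)))$ for $|x|>1$, verify $F\in W^{1,\varphi}_{\rm loc}$ using the doubling condition \eqref{eq30}, show $K_I(\cdot,F)\leqslant Q^*$ with $Q^*=Q\circ\iota$ outside the ball, transfer the averaged bound \eqref{eq1AE} to full balls centered on $\mathbb{S}^{n-1}$, and then invoke an interior distortion estimate (the paper uses \cite[Lemma~4.9]{RS} in the chordal metric). So the overall architecture is right.

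Where your account diverges from the paper is in the constants, and the gap is not merely bookkeeping. You write that $\iota$ carries $B(\zeta,\varepsilon)\setminus\overline{\mathbb{B}^n}$ into $B(\zeta,\lambda_n\varepsilon)\cap\mathbb{B}^n$ for some dimensional $\lambda_n$, and then propose to absorb $\lambda_n$ into the factor $2^{n+1}$. In fact the paper proves the sharp inclusion $\iota\bigl(B(\zeta,\varepsilon)\setminus\overline{\mathbb{B}^n}\bigr)\subset B(\zeta,\varepsilon)\cap\mathbb{B}^n$, i.e.\ $\lambda_n=1$ exactly, via a direct computation of $|\iota(\zeta+te)-\zeta|^2=t^2/|\zeta+te|^2<t^2$; this is what produces precisely $(4^n+1)$ and not something larger. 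The factor $2^{n+1}$ has a different origin: a $2^n$ comes from \cite[Lemma~3.1]{RSS} (applied with $C_*=(4^n+1)C$), and the remaining $2$ from the elementary bound $\log(1/\varepsilon)\big/\log(\varepsilon_0/\varepsilon)<2$, which is valid precisely when $\varepsilon<\varepsilon_0^2$; together with the restriction $r<1/2$ needed for the Jacobian bound $|y|^{-2n}\leqslant 4^n$ this is exactly where $\delta_0=\min\{1/2,\varepsilon_0^2\}$ arises. Finally, the factor $2$ in front of $\alpha_n$ comes from passing from the chordal metric estimate $h(F(x),F(x_0))\leqslant\alpha_n(\varepsilon/\varepsilon_0)^\alpha$ to the Euclidean one via $h(x,y)\geqslant|x-y|/2$ on $\overline{\mathbb{B}^n}$. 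If you make these three points precise your argument coincides with the paper's.
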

Note  here that  $\omega_{n-1} = n \Omega_n$.

 In the second part of the paper we  deal with harmonic mappings and  harmonic quasiconformal (shortly hqc) mappings among the other things.
We start with  the planar case which  is very  specific. Note  that
the subject of
hqc  mappings was
essentially initiated by O.~Martio \cite{Mar} as far as we know
and then  it  has been intensively studied by the participants of
the Belgrade Analysis Seminar (see, for example,  \cite{M$_3$}
and~\cite{BM} for more details and references cited there),
and  in particular by Kalaj, who  proved that if  $h$ is a hqc
mapping of the unit disk onto  a  Lyapunov domain, then $h$ is
Lipschitz (see, e.g., \cite{Ka$_1$}). Then  in~\cite{BM} it is
proved $h$ is co-Lipschitz.  For recent development of the subject  in planar and spatial case  see \cite{mm.Spring2021}.

We also  prove    local spatial  version  of  Privalov's  theorem   for harmonic functions (Theorem  \ref{thmloc1})     which has an independent interest.
Further, we consider  harmonic maps with  the growth of distortion (dilatations) of bounded mean value.
As application of results  obtained in  in  the first  part  and  spatial  version  of  Privalov's  theorem we  show
global  H\"{o}lder continuity of mappings in considered  class.
In  particular, under certain special restrictions, we show that  Lipschitz continuity of mappings holds.

In planar case
the condition (\ref{eq11B})\footnote{see Theorem  \ref{th1}  in section \ref{s4}, p.19.} provides sufficient conditions for H\"{o}lder and Lipschitz continuity.
Next  we  show  that  in some situation if Beltrami   coefficient
is H\"{o}lder continuous that the map is biLip  \footnote{see   the section 7  which can be considered as a separate part}. As application  we
obtain  some version of  Kellogg's theorem for quasiconformal
mappings.

Finally,  let us say a few words about the activities that influenced the research contained in  this manuscript:
\begin{remark}
During Belgrade Analysis  seminar,  winter semester  2019  and  2020,  we have considered subject   related  to Geometric Function Theory (GFT) and  qc mappings  and have tried  to start some projects related to the subject\footnote{In particular,  the first author has  clarified  some facts  related to Ahlfors book \cite{Ahl$_3$}  in discussion  with V. Bo\v zin  and M. Arsenovi\'c, and   E.~Sevost'yanov  gave several lectures related  to  a ring $Q$-homeomorphism  and Orlicz-Sobolev clases. In connection with this  the first author  of this manuscript  started two independent project  a) with  M. Arsenovi\'c  related  to regularity properties of solutions of Beltrami equation,
and  b)  with  R. Salimov and E.~Sevost'yanov,  related  H\"{o}lder and Lipschitz class in Orlicz-Sobolev clas.}.
After  writing  a final version of this manuscript, in communication with  D. Kalaj  it becomes clear in particular  that  the project  is also related to some versions  of  Kellogg   and  Warschawski theorem  for a class of quasiconformal maps, and our attention has been turned first  to  \cite{Ka$_2$}  and later
to results  obtained  in \cite{MOV}  and  in  \cite{APE}; see in particular Theorem 1.3  \cite{APE}.
\end{remark}

\section{Proof of Theorem~\ref{th4}} It is known that if $f$  is $K$-qc  mapping of the unit ball  ${\Bbb B}^n$ onto itself  then  $f\in W^{1,p}({\Bbb B}^n)$ for some  $p>n$
and it satisfies hypothesis  of  Theorem~\ref{th4}  with  $\varphi(t)=t^n$.
On the other hand,  Example \ref{ex1} below
shows that  there is   a map $f\in W^{1,3}({\Bbb B}^3)$  with  unbounded inner distortion  $K_I$    satisfying the conditions of Theorem~\ref{th4}  with $Q=K_I$.
We leave the interested reader to generalize this example.

\textbf{I.} According to Corollary~6.1 in \cite{MRSY$_1$}, the
function $Q$,  which satisfies  ~(\ref{eq1AE}),  has a finite mean oscillation at each point
$x_0\in\partial{\Bbb B}^n$.
In this case, it  follows  from ~\cite[Theorem~1]{Sev$_1$}   that  there is   continuous
extension  $\tilde{f}$   of the mapping $f$ onto ${\Bbb S}^{n-1}=\partial{\Bbb B}^n$.
%
We also note that the map   $\tilde{f}$ is  a homeomorphism of the unit
ball $\overline{{\Bbb B}^n}$ onto itself, see, for example,
\cite[Lemma~6]{Sm}.

\medskip
\textbf{II.} Using conformal transformation
$\psi(x)=\frac{x}{|x|^2},$ we extend the mapping $f$
homeomorphically onto the whole $n$-dimensional Euclidean space
${\Bbb R}^n$ as follows:
\begin{equation}\label{eq1C}F(x)=\left \{\begin{array}{rr}  f(x) , & |x|<1\,,   \\
\psi(f(\psi(x))), &  |x|\geqslant 1\,.
\end{array} \right.
\end{equation}
Using  the   condition, $f\in W^{1,\varphi}({\Bbb B}^n),$  we will show
that $F\in W^{1,\varphi}(B(0, R))$ for any $R>1$ (in particular, since  $F=f$  on ${\Bbb B}^n,$ the functions  $|\nabla F|$ and
$\varphi(|\nabla F|)$ are  integrable in ${\Bbb B}^n,$ but  we show more that these functions
also integrable   in $B(0, R),$ for any $R>1$). For this, we observe that, by the
differentiation rule of a superposition of mappings,
\begin{equation}\label{eq2}F^{\,\prime}(x)=\psi^{\,\prime}(f(\psi(x))\circ
f^{\,\prime}(\psi(x))\circ \psi^{\,\prime}(x)\,.
\end{equation}
Here we used the fact that homeomorphisms of the Orlicz-Sobolev
classes under the Calderon condition are differentiable almost
everywhere, see, for example, \cite[Theorem~1]{KRSS}. As usual, put
$$\Vert f^{\,\prime}(x)\Vert\,=\,\,\,\max\limits_{h\in {\Bbb R}^n
\setminus \{0\}} \frac {|f^{\,\prime}(x)h|}{|h|}\,.$$
Using direct calculations, we may establish the inequality
\begin{equation}\label{eq1}
\Vert f^{\,\prime}(x)\Vert\leqslant|\nabla f(x)|\leqslant
n^{1/2}\cdot\Vert f^{\,\prime}(x)\Vert
\end{equation}
at all points $x\in D$ where the map $f$ has formal partial
derivatives. Observe that
$\Vert\psi^{\,\prime}(x)\Vert=\frac{1}{|x|^{2}}$ (see, e.g.,
\cite[paragraph~7]{Sev$_2$}). Recall that for two linear mappings
$g$ and $h$ the relation
\begin{equation}\label{eq3}
\Vert g\circ h\Vert\leqslant \Vert g\Vert\cdot\Vert h\Vert
\end{equation}
holds, and here, equality holds as soon as at least one of the
mappings is  orthogonal (see, e.g., \cite[I.4,
relation~(4.13)]{Re}). Since $f(0)=0,$ $f(\psi(y))\ne 0$ for
$1<|y|\leqslant R$ and for any $R>1.$ Since the map $f(\psi(y))$ is
continuous in $1\leqslant |y|\leqslant R$ and does not vanish, there
is $m>0$ such that
\begin{equation}\label{eq4}
|f(\psi(y))|\geqslant m\,,\qquad 1\leqslant |y|\leqslant R\,.
\end{equation}
In this case, from~(\ref{eq2}), (\ref{eq1}), (\ref{eq3})
and~(\ref{eq4}), we obtain that
$$
\int\limits_{1<|x|<R}|\nabla F(x)|\,dm(x)\leqslant
\int\limits_{1<|x|<R} n^{1/2}\cdot \Vert
F^{\,\prime}(x)\Vert\,dm(x)=
$$
$$=n^{1/2}\cdot \int\limits_{1<|x|<R}\Vert
\psi^{\,\prime}(f(\psi(x))\Vert\cdot \Vert
f^{\,\prime}(\psi(x))\Vert\cdot
\Vert\psi^{\,\prime}(x)\Vert\,dm(x)=$$
$$=n^{1/2}\cdot \int\limits_{1<|x|<R}
\frac{1}{|f(\psi(x))|^2}\cdot\Vert f^{\,\prime}(\psi(x))\Vert\cdot
\frac{1}{|x|^2}\,dm(x)\leqslant \frac{n^{1/2}}{m^2}\cdot
\int\limits_{1<|x|<R}\Vert f^{\,\prime}(\psi(x))\Vert\,dm(x)=$$
\begin{equation}\label{eq1A}=\frac{n^{1/2}}{m^2}\cdot
\int\limits_{1/R<|y|<1}\frac{\Vert f^{\,\prime}(y)\Vert}{|y|^{2n}}\,
dm(y)\leqslant \frac{n^{1/2}R^{\,2n}}{m^2}\cdot
\int\limits_{1/R<|y|<1}|\nabla f(y)|\, dm(y)<\infty\,.\end{equation}

\medskip
\textbf{III.} Quite similarly, applying the same arguments to the
function~$\varphi(|\nabla F|)$ instead of $|\nabla F|,$ and taking
into account relation~(\ref{eq30}) together with the non-decreasing
property of the function $\varphi,$ we obtain that
$$
\int\limits_{1<|x|<R}\varphi(|\nabla F(x)|)\,dm(x)\leqslant
C_1\cdot\int\limits_{1<|x|<R} \varphi(\Vert
F^{\,\prime}(x)\Vert)\,dm(x)=
$$
$$=C_1\cdot\int\limits_{1<|x|<R}\varphi(\Vert
\psi^{\,\prime}(f(\psi(x)))\Vert\cdot \Vert
f^{\,\prime}(\psi(x))\Vert\cdot
\Vert\psi^{\,\prime}(x)\Vert)\,dm(x)=$$
$$=C_1\cdot \int\limits_{1<|x|<R}
\varphi\left(\frac{1}{|f(\psi(x))|^2}\cdot\Vert
f^{\,\prime}(\psi(x))\Vert\cdot
\frac{1}{|x|^2}\right)\,dm(x)\leqslant
C_2\cdot\int\limits_{1<|x|<R}\varphi(\Vert
f^{\,\prime}(\psi(x))\Vert)\,dm(x)=$$
\begin{equation}\label{eq1B}=C_2\cdot
\int\limits_{1/R<|x|<1}\varphi\left(\frac{\Vert
f^{\,\prime}(x)\Vert}{|x|^{2n}}\,\right) dm(x)\leqslant
C_2R^{2n}\cdot \int\limits_{1/R<|x|<1}\varphi(|\nabla f(x)|)\,
dm(x)<\infty\,.\end{equation}
\textbf{IV.} It follows from~(\ref{eq1A}) and~(\ref{eq1B}) that
\begin{equation}\label{eq5}
\int\limits_{B(0, R)}|\nabla F(x)|\,dm(x)<\infty\,,
\int\limits_{B(0, R)}\varphi(|\nabla F(x)|)\,dm(x)<\infty\,,\quad
R>1\,.
\end{equation}
Reasoning in a similar way, we may also obtain similar relations for
the inner dilatation of the map $F.$ Indeed, since the inner
dilatation does not change under conformal mapping (see, for
example, \cite[I.4.(4.15)]{Re}), we obtain that
$$
\int\limits_{B(0, R)} K_I(x, F)\,dm(x)= \int\limits_{{\Bbb B}^n}
K_I(x, f)\,dm(x)+ \int\limits_{1<|x|<R} K_I(\psi(x), f)\,dm(x)\,.
$$
Making a change of variables here, and taking into account that
$K_I(x, f)\in L^1({\Bbb B}^n)$ by the assumption, we obtain that
$$\int\limits_{B(0, R)} K_I(x, F)\,dm(x)=\int\limits_{{\Bbb B}^n} K_I(x,
f)\,dm(x)+\int\limits_{1/R<|y|<1} K_I(y, f)\cdot
\frac{1}{|y|^{2n}}\,dm(y)\leqslant$$
\begin{equation}\label{eq12}
\leqslant \int\limits_{{\Bbb B}^n} K_I(x, f)\,dm(x)+R^{2n}\cdot
\int\limits_{1/R<|y|<1} K_I(y, f)\,dm(y)<\infty.
\end{equation}
%
\medskip
\textbf{V.} Let us check that   $F\in ACL ({\Bbb R}^n).$
It is known if $f\in
W^{1, 1}({\Bbb B}^n),$  that   the unit ball ${\Bbb B}^n$ may be divided in a standard way
into no more than a countable number of parallelepipeds $I_s$, $s\geq 1$,  with disjoint interiors,  such that   $F$
is absolutely continuous on almost all  coordinate segments  in each $I_s$,$s\geq 1$. We call a segment coordinate segment if  it is  parallel to a  coordinate axis.
Let us prove:

(A)   $F$ is absolutely
continuous on almost all segments in $\overline{{\Bbb B}^n},$
parallel to the coordinate axes.

It is enough to consider segments $r$  for which  $F$ is   absolutely
continuous (shortly AC)  on   $r_s:=r\cap I_s$   for every  $s\geq 1$.
Suppose that $r(t)=\{x\in {\Bbb R}^n: x=x_0+te, t\in [a, b]\}$ is
such a segment in $\overline{{\Bbb B}^n},$ where $e$ is some
coordinate unit vector, and $x_0\in{\Bbb B}^n.$

Two cases are
possible: when~$z_0:=x_0+be$ belongs to the interior of the ball,
and when the same point lies on the unit sphere. Set
$\alpha(t)=f(x_0+te).$ In the first case,  there are finite number of integers $s_1,s_2,...,s_l$  such that  $r=\cup_{\nu=1}^l r_{s_\nu}$. Hence  $F$ is AC on $r$.

Note also here  that by  ACL-characterization
of the Sobolev classes   (see, e.g., \cite[Theorems~1.1.2 and 1.1.3]{Ma}) and
by  the  fact  that   for a  real-valued functions defined on an interval of the real line,
absolute continuity   may be formulated by the validity of
the fundamental theorem of calculus   in terms of Lebesgue integration, (see, for example, see~\cite[Theorem~IV.7.4]{Sa}),
we have
$\int\limits_{a}^b\alpha^{\,\prime}(t)\,dt=\alpha(b)-\alpha(a)$. Let
now $z_0\in {\Bbb S}^{n-1}.$ Then, as it  was proved above with respect
to the inner points of the ball, for an arbitrary $a<c<b$ we
have that
\begin{equation}\label{eq6A}
\int\limits_{a}^c\alpha^{\,\prime}(t)\,dt=\alpha(c)-\alpha(a)\,.
\end{equation}

Since
it was also  proved above, that  the map $f$ is a homeomorphism in the closed unit
ball~$\overline{{\Bbb B}^{n}},$
the passage to the limit on the right-hand side of~(\ref{eq6A}) as
$c\rightarrow b$  gives that  $\alpha(b)-\alpha(a).$

Since (\ref{eq6A}) holds for every subinterval of $r$, we first conclude that  $F$ is AC on $r$, and (A)  follows.
Now consider  the family   $J(B(0, R))$  of  all  coordinate segments  in $B(0, R)$.
It follows from
the integrability of the gradient of the mapping $F$   on $B(0, R)$ (see ~(\ref{eq5}) and
by virtue of Fubini's theorem (see, for
example,~\cite[Theorem~III.8.1]{Sa}) that the derivative of the
function $\alpha$ is integrable on almost all segments in $B(0, R)$
parallel to the coordinate axes. Without loss of generality, we may
assume that a  segment $r(t)$ has exactly this property.

Since   the reflection with respect  to the unit sphere  is  $C^\infty$ change of variables, and   $f\in W^{1,1}({\Bbb B}^n),$
we conclude that  $F\in W^{1,1}\big( (B(0, R) \setminus {\Bbb B}^n)\big)$ (see item 1.1.7  \cite{Ma} and also  definitions  of Sobolev spaces   on manifolds in literature).
Similarly as above, we may verify that:

(B)   $F$
is absolutely continuous on almost all segments in ${\Bbb
R}^n\setminus {\Bbb B}^n,$ parallel to the coordinate axes.

Since $F$   is continuous on ${\Bbb
R}^n$, this
immediately implies that $F$ is absolutely continuous on the same
segments in ${\Bbb R}^n,$ as required.

\medskip
\textbf{VI.} Since $F\in ACL ({\Bbb R}^n),$ by~(\ref{eq5}) $F\in
W_{\rm loc}^{1, \varphi}(B(0, R))$ for any $R>1.$ Thus,
by~(\ref{eq5}) and (\ref{eq12}), $F$ is a ring $Q^{*}$-mapping in
$B(0, R),$ where $Q^*(x)=Q(x)$ for $x\in {\Bbb B}^n$ and
$Q^*(x)=Q(\psi(x))$ for $x\in B(0, R)\setminus {\Bbb B}^n$ (see,
e.g., \cite[Theorem~2.2]{KR}, cf.~\cite[Corollary~9]{KRSS}).

\medskip
\textbf{VII.} Let $\zeta_0\in {\Bbb S}^{n-1}$ and $r_0>0.$ Notice,
that
\begin{equation}\label{eq7B}
\psi(B_+(\zeta_0, \varepsilon))\subset B_-(\zeta_0,
\varepsilon)\quad \forall\,\,\varepsilon\in(0, 1)\,,
\end{equation}
where
$$B_+(\zeta_0, \varepsilon)=\{x\in {\Bbb R}^n: \exists\,e\in {\Bbb S}^{n-1},
t\in [0, \varepsilon): x=\zeta_0+te, |x|>1\}=B(\zeta_0,
\varepsilon)\cap ({\Bbb R}^n\setminus {\Bbb B}^n)\,,$$
$$B_-(\zeta_0, \varepsilon)=\{x\in {\Bbb R}^n: \exists\,e\in {\Bbb S}^{n-1},
t\in [0, \varepsilon): x=\zeta_0+te, |x|<1\}=B(\zeta_0,
\varepsilon)\cap {\Bbb B}^n\,,$$
and, as above, $\psi(x)=\frac{x}{|x|^2}.$ Indeed, for a  given
$x=\zeta_0+te\in B_+(\zeta_0, \varepsilon),$ computing the square of
the module of the vector by means  of  the scalar product $(\cdot ,
\cdot),$ we obtain that
$$|\psi(x)-\zeta_0|^{2}=\left|\frac{\zeta_0+te}{|\zeta_0+te|^2}-\zeta_0\right|^{2}=$$
$$=\frac{1}{|\zeta_0+te|^2}-\frac{2(1+t(\zeta_0, e))}{|\zeta_0+te|^2}+
\frac{|\zeta_0+te|^2}{|\zeta_0+te|^2}=$$
$$=\frac{1-2(1+t(\zeta_0, e))+1+2t(\zeta_0, e)+t^2}{|\zeta_0+te|^2}=$$
$$=\frac{t^2}{|\zeta_0+te|^2}<t^2\,,$$
that is, $|\psi(x)-\zeta_0|<t,$ as required.

\textbf{VIII.} Let $0<r<1/2.$ Now, by~(\ref{eq7B}) and by formula  for
the change of variable in the integral (see, e.g.,
\cite[Theorem~3.2.5]{Fe}) we obtain that
$$\int\limits_{B(\zeta_0, r)\cap ({\Bbb R}^n\setminus\overline{{\Bbb B}^n})}
Q^*(y)\,dm(y)=\int\limits_{B(\zeta_0, r)\cap ({\Bbb
R}^n\setminus\overline{{\Bbb B}^n})} Q(\psi(y))\,dm(y)\leqslant$$
\begin{equation}\label{eq8}\leqslant\int\limits_{B(\zeta_0, r)\cap {\Bbb B}^n} Q(y)
\cdot\frac{1}{|y|^{2n}}\,dm(y)\,.
\end{equation}
Let $y\in B(\zeta_0, r)\cap {\Bbb B}^n.$ Now $y=\zeta_0+et,$ where
$e\in {\Bbb S}^{n-1}$ and $0\leqslant t<r<1/2.$ Hence, by the
Cauchy-Bunyakovsky inequality, we have that
\begin{equation}\label{eq9}
|y|^2=|\zeta_0+et|^2=1+2t(\zeta_0, e)+t^2\geqslant
1-2t+t^2=(1-t)^2\geqslant 1/4\,.
\end{equation}
By~(\ref{eq8}) and (\ref{eq9}),
\begin{equation}\label{eq10}
\int\limits_{B(\zeta_0, r)\cap ({\Bbb R}^n\setminus\overline{{\Bbb
B}^n})} Q^*(y)\,dm(y)\leqslant 4^n\cdot \int\limits_{B(\zeta_0,
r)\cap {\Bbb B}^n} Q(y)\,dm(y)\,.
\end{equation}
It immediately follows from~(\ref{eq10}) that
\begin{equation}\label{eq11}
\int\limits_{B(\zeta_0, r)} Q^*(y)\,dm(y)\leqslant (4^n+1)\cdot
\int\limits_{B(\zeta_0, r)\cap {\Bbb B}^n} Q(y)\,dm(y)<\infty\,,
\end{equation}
because $Q$ is integrable in ${\Bbb B}^n$ by the assumption.

\medskip
\textbf{IX.} Denote by $F_Q$ the family of all homeomorphisms of the
class $W^{1,\varphi}({\Bbb B}^n)$ of the unit ball onto itself
satisfying the condition $f(0)=0,$ for which $K_I (x, f)\leqslant
Q(x)$ a.e. $x\in {\Bbb B}^n.$ According to point VI, every mapping
$F$ defined by formula~(\ref{eq1C}), where $f$ satisfies  the
hypothesis of the theorem, belongs to the class $F_Q.$  Note also
that all such mappings obviously do not take the values 0 and
$\infty$ in the domain ${\Bbb R}^n\setminus\{0\}.$
Let $h$ be a chordal metric in $\overline{{\Bbb R}^n},$
%
$$
h(x,\infty)=\frac{1}{\sqrt{1+{|x|}^2}}, \ \
h(x,y)=\frac{|x-y|}{\sqrt{1+{|x|}^2} \sqrt{1+{|y|}^2}}\,, \quad x\ne
\infty\ne y\,,
$$
%
and let $h(E):=\sup\limits_{x,y\in E}\,h(x,y)$ be a chordal diameter
of a set~$E\subset \overline{{\Bbb R}^n}$ (see, e.g.,
\cite[Definition~12.1]{Va$_2$}). Based on the above formula,
$h({\Bbb R}^n\setminus\{0\})=1.$ We note that all the statements
obtained in the proof of this theorem up to and including point VIII
hold. By~(\ref{eq11}) and (\ref{eq1AE}),
$$
\sup\limits_{\varepsilon\in(0,\varepsilon_0)}\frac{1}{\Omega_n\varepsilon^n}\int\limits_{
B(\zeta, \varepsilon)} Q^{\,*}(x)\,
dm(x)\leqslant$$$$\leqslant(4^n+1)\cdot
\sup\limits_{\varepsilon\in(0,\varepsilon_0)}\frac{1}{\Omega_n\varepsilon^n}\int\limits_{
{\Bbb B}^n\cap B(\zeta, \varepsilon) }Q(x)\, dm(x)< (4^n+1)\cdot
C\,.$$

By Lemma~3.1 in \cite{RSS} for $C_*=(4^n+1)\cdot C$ and
$\varphi(t)=1,$
$$\int\limits_{A(x_0, \varepsilon, \varepsilon_0)}
\frac{Q^{*}(x)\,dm(x)}{|x-x_0|^n} \leqslant
\frac{\Omega_n(4^n+1)2^nC}{\log
2}\left(\log\frac{1}{\varepsilon}\right),\quad\forall\,\,
\varepsilon\in (0, \varepsilon_0)\,,\quad \forall\,\,x_0\in \partial
{\Bbb B}^n\,.$$ Observe that
$ \frac{\log\frac{1}{\varepsilon}} {\log
\left(\frac{\varepsilon_0}{\varepsilon}\right)}= 1+\frac{\log
\frac{1}{\varepsilon_0}} {\log
\left(\frac{\varepsilon_0}{\varepsilon}\right)}<2 $ for
$\varepsilon\in (0, \delta_0),$ where $\delta_0>0$ is the  number defined  in
the conditions of the theorem. Now
$$\left(\log
\left(\frac{\varepsilon_0}{\varepsilon}\right)\right)^{\,-1}\cdot\int\limits_{A(x_0,
\varepsilon, \varepsilon_0)}
\frac{Q^*(x)\,dm(x)}{|x-x_0|^n}\leqslant$$
\begin{equation}\label{eq5B}
\leqslant \frac{\Omega_n(4^n+1)2^nC}{\log
2}\frac{\log\frac{1}{\varepsilon}} {\log
\left(\frac{\varepsilon_0}{\varepsilon}\right)}\leqslant
\frac{\Omega_n(4^n+1)2^{n+1}C}{\log 2}\,.
\end{equation}
Applying Lemma~4.9 in \cite{RS} for $\psi(t)=1/t$ we obtain
by~(\ref{eq5B}) that
$$h(F(x), F(x_0))\leqslant \alpha_n \left(\frac{|x-x_0|}{\varepsilon_0}\right)^{\alpha}\,,$$
for every $x\in B(x_0, \varepsilon_0)$ and any $x_0\in {\Bbb
R}^n\setminus \{0\},$ where $\alpha=\left(\frac{\omega_{n-1}\log
2}{\Omega_n(4^n+1)2^{n+1}C}\right)^{1/(n-1)}$ and $\alpha_n$ is some
constant depending only on $n.$ This inequality also remains valid
at the origin, since the same arguments and the assertion of
Lemma~4.9 in \cite{RS} apply to the map $f:{\Bbb B}^n \rightarrow
{\Bbb B}^n.$
Finally, since $h(x, y)\geqslant \frac{|x-y|}{1+r^2_0}$ for $x,y\in
\overline{B(0, r_0)},$ and $F|_{\overline{{\Bbb B}^n}}=f,$ we obtain
that
$$|f(x)-f(x_0)|\leqslant 2\alpha_n \varepsilon_0^{\,-\alpha}
|x-x_0|^{\alpha}\,.$$
Theorem is proved.~$\Box$

\medskip
\begin{example}\label{ex1}
We give an example of a map satisfying the conditions and the
conclusion of Theorem~\ref{th4} for which the function $Q$ under the
conditions of this theorem is not bounded. We consider the infinite
partition of the segment~$[0, 1]$ by points $\left[\frac{k}{k+1},
\frac{k+1}{k+2}\right],$ $k=0,1,2,\ldots \,.$ We consider the
following function $\beta:(0, 1]\rightarrow {\Bbb R},$ defined as
follows:
$$\beta(t)= \left\{
\begin{array}{rr}
1, & \left[\frac{k}{k+1}, \frac{k+1}{k+2}-2^{-4k-1}\right],\\
2^k,  &
t\in\left(\frac{k+1}{k+2}-2^{-4k-1},\,\frac{k+1}{k+2}\right)\,,
\end{array}
\right. $$ $k=0,1,2,\ldots .$
We show that for an arbitrary number $a\in [0, 1]$ the function
$\beta$ satisfies the condition
\begin{equation}\label{eq4A}
\int\limits_{a}^1\beta(t)\,dt\leqslant 2(1-a)\,.
\end{equation}
In fact, we fix such a number $a.$ Then there is $k_0\in{\Bbb N}$
such that $a\in \left(\frac{k_0}{k_0+1},
\frac{k_0+1}{k_0+2}\right].$ In this case, we obtain that
$$\int\limits_{a}^1\beta(t)\,dt=
\int\limits_{a}^{\frac{k_0+1}{k_0+2}}\beta(t)\,dt+
\int\limits_{\frac{k_0+1}{k_0+2}}^1\beta(t)\,dt =$$
$$=\frac{k_0+1}{k_0+2}-a+
\sum\limits_{k=k_0+1}^{\infty}\int\limits_{\frac{k}{k+1}}^{\frac{k+1}{k+2}-2^{-4k-1}}
\,dt+\sum\limits_{k=k_0+1}^{\infty}
\int\limits_{\frac{k+1}{k+2}-2^{-4k-1}}^{\frac{k+1}{k+2}}2^k
\,dt\leqslant$$
$$\leqslant \int\limits_a^1 dt+\sum\limits_{k=k_0+1}^{\infty}
\int\limits_{\frac{k+1}{k+2}-2^{-4k-1}}^{\frac{k+1}{k+2}}2^k
\,dt\leqslant 1-a+\sum\limits_{k=k_0+1}^{\infty} 2^{-3k-1}=$$
\begin{equation}\label{eq15}
= 1-a+\frac{2^{-3k_0-1}}{7}\leqslant 1-a+2^{-k_0-2}\,.
\end{equation}
Observe that the inequality $2^{-k_0-2}\leqslant 1-a$ is equivalent
to $a\leqslant 1-2^{-k_0-2}.$ In turn, according to the choice of
$a,$ $a\leqslant\frac{k_0+1}{k_0+2}=1-\frac{1}{k_0+2}.$ However,
$1-\frac{1}{k_0+2}\leqslant 1-2^{-k_0-2}$ is equivalent to the
obvious inequality $k_0+2\leqslant 2^{k_0+2},$ $k_0=0,1,2,\ldots .$
It follows from what has been said that
$$a\leqslant\frac{k_0+1}{k_0+2}=1-\frac{1}{k_0+2}\leqslant 1-2^{-k_0-2}\,,$$
so that by~(\ref{eq15}) we obtain that
$$\int\limits_{a}^1\beta(t)\,dt\leqslant 2(1-a)\,.$$
The relation~(\ref{eq4A}) is proved. Choosing now $\varepsilon \in
(0, 1)$ and setting $a:=1-\varepsilon,$ from (9) we obtain that
\begin{equation}\label{eq5E}
\int\limits_{1-\varepsilon}^1\beta(t)\,dt\leqslant 2\varepsilon\,.
\end{equation}
Now put $Q(x)=\beta(|x|).$ We show now that condition~(\ref{eq1AE})
is fulfilled for the indicated function~$Q.$ For simplicity, we
further consider the case~$n=3$.

\medskip
Choose an arbitrary point $\zeta_0\in {\Bbb S}^2,$ and let
$0<\varepsilon<1 $. We estimate the integral over the intersection
of the ball $B(\zeta_0, \varepsilon)$ with ${\Bbb B}^3$ using the
Fubini theorem and using some geometric considerations. Using the
formula $S=2\pi rh$ for the spherical cap  lying on the sphere of
the radius $r$ and of the hight $h,$ we may verify that
$\mathcal{H}^2(B(\zeta_0, \varepsilon)\cap {\Bbb
S}^2)=\pi\varepsilon^2,$     where   $\mathcal{H}^2$  denotes  $2$-dimensional Hausdorff measure on ${\Bbb
S}^2$.

Now, By Fubini's theorem (see, for example,
\cite[Theorem~III.8.1]{Sa}) we will have that
$$\int\limits_{B(\zeta_0, \varepsilon)\cap{\Bbb
B}^3}Q(x)\,dm(x)\leqslant\int\limits_{1-\varepsilon}^1\int\limits_{S(0,
r)\cap B(\zeta_0, \varepsilon)}\,Q(x)\,d\mathcal{H}^2\,dr=$$
$$=\int\limits_{1-\varepsilon}^1\beta(r)\int\limits_{S(0,
r)\cap B(\zeta_0, \varepsilon)}\,d\mathcal{H}^2\,dr\leqslant
\int\limits_{1-\varepsilon}^1\beta(r) \int\limits_{{\Bbb S}^2\cap
B(\zeta_0, \varepsilon)}\,d\mathcal{H}^2\,dr\leqslant
$$
\begin{equation}\label{eq7C}
\leqslant \pi\varepsilon^2
\int\limits_{1-\varepsilon}^1\beta(r)\,dr\,.
\end{equation}
It follows from~(\ref{eq5E}) and~(\ref{eq7C}) that
\begin{equation*}\label{eq1D}
\frac{3}{4\pi\varepsilon^3}\int\limits_{B(\zeta_0,
\varepsilon)\cap{\Bbb B}^3}Q(x)\,dm(x)\leqslant \frac{3}{2}\,.
\end{equation*}
Thus, for the function $Q,$ condition~(\ref{eq1AE}) is satisfied.

Guided by Proposition~6.15 in~\cite{RSY$_1$}, by analogy, we
construct the desired spatial map as follows:
$$f(x)=\frac{x}{|x|}e^{\int\limits_{1}^{|x|}(\beta(t)/t)\,dt}\,,\quad f(0):=0\,.$$
Note that the map $f,$ defined in this way, is a homeomorphism. We
verify that all the conditions of Theorem~\ref{th4} are satisfied.
Indeed, guided by Proposition~6.3 in~\cite{MRSY$_1$}, we may
calculate the tangential, radial, inner dilatations of the map $f$
and the matrix norm of $f^{\,\prime}(x)$ using the following
formulas:
$$\delta_{\tau}(x)=\frac{|f(x)|}{|x|}=e^{\int\limits_{1}^{|x|}(\beta(t)/t)\,dt}\,,
\quad \delta_r(x)=\frac{\partial|f(x)|}{\partial
|x|}=e^{\int\limits_{1}^{|x|}(\beta(t)/t)\,dt}\cdot\frac{\beta(|x|)}{|x|}\,,$$
$$\Vert f^{\,\prime}(x)\Vert=\max\{\delta_{\tau},\delta_r\}=
e^{\int\limits_{1}^{|x|}(\beta(t)/t)\,dt}\cdot\frac{\beta(|x|)}{|x|}\,,\quad
K_I(x, f)=\beta(|x|)\,.$$
Note that the norm of the map $f^{\,\prime}(x)$ is locally bounded
in ${\Bbb B}^3\setminus\{0\}$; therefore, by virtue of
inequality~(\ref{eq1}), all partial derivatives of the mapping that
exist almost everywhere are also locally bounded. From this, in
particular, it follows that the map $f$ belongs to the class $ACL$
in ${\Bbb B}^3.$

\medskip
Observe that the function $\varphi(t)=t^3$ satisfies the Calderon
condition~(\ref{eq7A}). Let us verify that the map $f$ belongs to
the class $W^{1, \varphi}({\Bbb B}^3).$ Indeed, by Fubini theorem,
$$\int\limits_{{\Bbb B}^3}\Vert f^{\,\prime}(x)\Vert^3\,dm(x)=
\int\limits_{{\Bbb
B}^3}e^{3\int\limits_{1}^{|x|}(\beta(t)/t)\,dt}\cdot\frac{\beta^3(|x|)}{|x|^3}\,dm(x)=$$
$$=\int\limits_{0<|x|<1/2}e^{3\int\limits_{1}^{|x|}(\beta(t)/t)\,dt}\cdot\frac{\beta^3(|x|)}{|x|^3}\,dm(x)
+\int\limits_{1/2<|x|<1}e^{3\int\limits_{1}^{|x|}(\beta(t)/t)\,dt}\cdot\frac{\beta^3(|x|)}{|x|^3}\,dm(x)\leqslant$$
\begin{equation}\label{eq16}\leqslant
\frac{\pi}{6}+8\pi\int\limits_{1/2}^1\beta^3(t)\,dt<\infty\,.
\end{equation}
In~(\ref{eq16}), we took into account that
$\int\limits_{1/2}^1\beta^3(t)\,dt<\infty.$ Indeed, by the
construction,
$$\int\limits_{1/2}^1\beta^3(t)\,dt\leqslant
\int\limits_{1/2}^1 dt+\sum\limits_{k=0}^{\infty}2^{-4k-1}\cdot
2^{3k}=1/2+1=3/2<\infty\,.$$
Since $f\in ACL,$ it follows from~(\ref{eq16}) that $f\in W^{1,
\varphi}({\Bbb B}^3).$  Using H\"{o}lder's inequality, it may also
be obtained from inequalities~(\ref{eq16}) that $f\in W^{1, 1}({\Bbb
B}^3).$

\medskip
We show that also $Q(x)=K_I(x, f)=\beta(|x|)\in L^1({\Bbb B}^n).$ In
fact,
$$\int\limits_{{\Bbb B}^3} K_I(x, f)\,dm(x)=4\pi\int\limits_0^1 t^2\cdot \beta(t)\,dt<\infty$$
by~(\ref{eq4A}). Thus, all the conditions of Theorem~\ref{th4} are
satisfied. Note that the map $f$ is even Lipschitz on the unit
sphere, since it identically maps some neighborhood of the unit
sphere onto itself.
\end{example}

\medskip
\begin{example}\label{ex2}
For comparison, we will also construct a similar example on the
plane. Note that this example is  related to Lemma 4.2 and Theorem 4.1
in~\cite{RSS}.

\medskip
Let $\beta$ be the function constructed in Example~\ref{ex1}. Now
put $Q(z)=\beta(|z|).$ Choose an arbitrary point $\zeta_0\in {\Bbb
S}^1,$ and let $0<\varepsilon<1 $. We are going to  estimate the integral over the
intersection of the disk $B(\zeta_0, \varepsilon)$ with ${\Bbb B}^2$
using the Fubini theorem and using some geometric considerations.
Set
$$\theta_1=\inf\limits_{z\in B(\zeta_0, \varepsilon)\cap{\Bbb
B}^2}\arg z\,,\quad\theta_2=\sup\limits_{z\in B(\zeta_0,
\varepsilon)\cap{\Bbb B}^2}\arg z\,,$$
$$L(r, \theta_1, \theta_2)=\{z\in S(0, r): z=re^{i\theta},
\theta_1<\theta<\theta_2\}\,.$$ By Fubini's theorem (see, for
example, \cite[Theorem~III.8.1]{Sa}) we will have that
$$\int\limits_{B(\zeta_0, \varepsilon)\cap{\Bbb
B}^2}Q(z)\,dm(z)\leqslant\int\limits_{1-\varepsilon}^1\int\limits_{S(0,
r)\cap B(\zeta_0, \varepsilon)}Q(z)\,|dz|dr=$$
$$=\int\limits_{1-\varepsilon}^1\beta(r)\int\limits_{L(r, \theta_1,
\theta_2)}\,|dz|dr=\int\limits_{1-\varepsilon}^1\beta(r)r(\theta_2-\theta_1)\,dr\leqslant
$$
\begin{equation}\label{eq7C1}
\leqslant
\int\limits_{1-\varepsilon}^1\beta(r)(\theta_2-\theta_1)\,dr\,.
\end{equation}
It follows from~(\ref{eq7C1}) that
\begin{equation}\label{eq1D1}
\frac{1}{\pi\varepsilon^2}\int\limits_{B(\zeta_0,
\varepsilon)\cap{\Bbb B}^2}Q(z)\,dm(z)\leqslant
\frac{1}{\pi\varepsilon^2}
\int\limits_{1-\varepsilon}^1\beta(r)(\theta_2-\theta_1)\,dr\,.
\end{equation}
Through direct geometric calculations, it can be found that
$\theta_2-\theta_1=
2\arccos\left(\frac{2-\varepsilon^2}{2}\right)\sim 2\varepsilon$ as
$\varepsilon\rightarrow 0.$ Now, $(\theta_2-\theta_1)/\varepsilon
\leqslant 3$ for sufficiently small $\varepsilon>0.$ Thus,
by~(\ref{eq5E}) and~(\ref{eq1D1})
\begin{equation}\label{eq1E}
\frac{1}{\pi\varepsilon^2}\int\limits_{B(\zeta_0,
\varepsilon)\cap{\Bbb B}^2}Q(z)\,dm(z)\leqslant
\frac{3}{\pi\varepsilon}
\int\limits_{1-\varepsilon}^1\beta(r)\,dr\leqslant
\frac{6}{\pi}:=c\,, \quad 0<\varepsilon<\varepsilon_0
\end{equation}
for some $\varepsilon_0>0.$ Thus, for the function $Q,$
condition~(\ref{eq1AE}) is satisfied. According to Proposition~6.15
in~\cite{RSY$_1$}, the mapping
$$w=f(re^{i\theta})=e^{i\theta+\int\limits_{1}^r(\beta(t)/t)\,dt}$$
is a homeomorphism of the Sobolev class $W_{\rm loc}^{1, 1}({\Bbb
C})$ and has a dilatation $K_I(z, f)$ equal to $\beta(|z|).$ Note
that this map takes the unit disk onto itself; moreover, $f|_{{\Bbb
S}^1}$ is H\"{o}lder continuous with an arbitrary exponent on ${\Bbb
S}^1$ since $f(z)=z$ in some neighborhood of ${\Bbb S}^1.$
\end{example}

\section{On H\"{o}lder type estimates for mappings with a condition on the mean value}

Given a Lebesgue measurable function $Q:{\Bbb B}^n\rightarrow [0,
\infty]$ we set $Q^*(x)=Q(x)$ for $x\in {\Bbb B}^n$ and
$Q^*(x)=Q(\psi(x))$ for $x\in B(0, R)\setminus {\Bbb B}^n,$ where
$\psi(x)=x/|x|^2.$ Put
\begin{equation}\label{eq12D*}
q_{x_0}(r):=\frac{1}{\omega_{n-1}r^{n-1}}\int\limits_{|x-x_0|=r}Q(x)\,d\mathcal{H}^{n-1}\,,
\end{equation}
\begin{equation}\label{eq12D}
q^*_{x_0}(r):=\frac{1}{\omega_{n-1}r^{n-1}}\int\limits_{|x-x_0|=r}Q^*(x)\,d\mathcal{H}^{n-1}\,,
\end{equation}
where $r>0,$ $\omega_{n-1}$ is the area of the unit sphere ${\Bbb
S}^{n-1}$ in ${\Bbb R}^n,$ and $\mathcal{H}^{n-1}$ denotes the
$(n-1)$-dimensional Hausdorff measure. When calculating the value of
$q_{x_0}(r)$ in~(\ref{eq12D*}), we assume that $Q$ is extended by
zero outside the unit ball. We prove one more important result.

\medskip
\begin{theorem}\label{th3}
{\sl\,Let $\alpha\in (0, 1].$ Suppose that under the conditions of
Theorem~\ref{th4}, instead of requirement~(\ref{eq1AE}), the
relation
\begin{equation}\label{eq11A}
\limsup\limits_{t\rightarrow
0}\int\limits_t^{\varepsilon_0}\left(\alpha-\frac{1}{q^{*\,1/(n-1)}_{x_0}(r)}\right)\cdot
\frac{dr}{r}<+\infty
\end{equation}
holds for some $x_0\in \overline{{\Bbb B}^n}$ and some
$0<\varepsilon_0<1/2,$ where $q^*_{x_0}(r)$ is defined
in~(\ref{eq12D}), and $K_I(x, f)\leqslant Q(x)$ for a.e. $x\in {\Bbb
B}^n.$ If $f$ has a homeomorphic extension $f:\overline{{\Bbb
B}^{n}}\rightarrow \overline{{\Bbb B}^{n}},$ then there exists $C>0$
and $0<\widetilde{\varepsilon_0}<\varepsilon_0$ depending only on
$n,$ $x_0$ and $Q$ such that
\begin{equation}\label{eq12B}
|f(x)-f(x_0)|\leqslant C|x-x_0|^{\alpha}\quad \forall\,\,x\in B(x_0,
\widetilde{\varepsilon_0})\cap \overline{{\Bbb B}^n}\,.
\end{equation}
 }
\end{theorem}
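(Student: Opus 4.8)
The plan is to reuse the inversion-extension machinery from the proof of Theorem~\ref{th4}: extend $f$ across $\mathbb{S}^{n-1}$ by the Kelvin transform $F$ as in \eqref{eq1C}, so that $F$ becomes, on a ball $B(0,R)$ with $R>1$, a ring $Q^*$-mapping in the sense already established in point VI of the proof of Theorem~\ref{th4}. The key point is that the local hypothesis \eqref{eq11A} is exactly the integral condition on $q^*_{x_0}$ needed to invoke the distance-distortion estimate at an interior point of $B(0,R)$ — here $x_0$ may lie on $\mathbb{S}^{n-1}$ but after extension it is an interior point of $B(0,R)$. So the strategy is local rather than uniform over $\partial\mathbb{B}^n$: we fix the single point $x_0$ and argue only in a small ball around it.

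First I would record that, since $\varepsilon_0<1/2$, for $x_0\in\overline{\mathbb{B}^n}$ and $0<r<\varepsilon_0$ the sphere $|x-x_0|=r$ meets $\mathbb{B}^n$ and possibly $\mathbb{R}^n\setminus\overline{\mathbb{B}^n}$, and by the computation \eqref{eq7B}--\eqref{eq11} (change of variables under $\psi$, together with $|y|\geqslant 1/2$ on $B(\zeta_0,r)\cap\mathbb{B}^n$) one gets a bound of the shape
\begin{equation}\label{eq:qstarbound}
q^*_{x_0}(r)\leqslant (4^n+1)\,q_{x_0}(r)\quad\text{(suitably interpreted when $x_0\in\partial\mathbb{B}^n$)},
\end{equation}
so that \eqref{eq11A} formulated with $q^*$ is genuinely a hypothesis one can verify in practice from $Q$ itself; but for the proof one simply takes \eqref{eq11A} as given. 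Next, because $F$ is a ring $Q^*$-mapping in $B(0,R)$, I would apply the interior Hölder-type estimate for ring $Q$-homeomorphisms — the same Lemma~4.9 in \cite{RS} (with an appropriate choice of the auxiliary function $\psi$, now $\psi\equiv1$ rather than $\psi(t)=1/t$, since we want exponent $\alpha$ rather than the borderline exponent) — at the interior point $x_0$ of $B(0,R)$. Condition \eqref{eq11A} is precisely the integrability requirement that makes that lemma yield
$$
h\bigl(F(x),F(x_0)\bigr)\leqslant C_1\,|x-x_0|^{\alpha}
$$
for $x$ in a small ball $B(x_0,\widetilde\varepsilon_0)$, with $C_1$ and $\widetilde\varepsilon_0$ depending only on $n$, $x_0$ and $Q$ (through the $\limsup$ in \eqref{eq11A}).

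Finally I would pass from the chordal metric back to the Euclidean one exactly as at the end of the proof of Theorem~\ref{th4}: since all points involved lie in a fixed bounded region (say $\overline{B(0,2)}$), $h(x,y)\geqslant c_n|x-y|$ there, and $F|_{\overline{\mathbb{B}^n}}=f$, one obtains \eqref{eq12B} with $C=C_1/c_n$ after shrinking $\widetilde\varepsilon_0$ if necessary so that $B(x_0,\widetilde\varepsilon_0)\cap\overline{\mathbb{B}^n}\subset B(x_0,\varepsilon_0)$. I expect the main obstacle to be bookkeeping around the case $x_0\in\partial\mathbb{B}^n$: one must be careful that the spherical averages $q^*_{x_0}(r)$ in \eqref{eq12D} are taken over full spheres in $\mathbb{R}^n$ (with $Q^*$ extended appropriately), that the ring-$Q^*$ property of $F$ genuinely holds on annuli centered at such a boundary point, and that Lemma~4.9 of \cite{RS} is applicable verbatim at an interior point of the enlarged domain $B(0,R)$ — i.e.\ that the extension has not introduced a singularity at $x_0$. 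Everything else (the estimate \eqref{eq:qstarbound}, the metric comparison, the choice of constants) is routine and parallels Theorem~\ref{th4}.
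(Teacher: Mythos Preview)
Your overall plan matches the paper's: repeat points II--VI of the proof of Theorem~\ref{th4} verbatim to extend $f$ to $F$ by inversion, obtain that $F$ is a ring $Q^*$-mapping on $B(0,R)$ with $R>1$, apply an interior distortion estimate at the now-interior point $x_0$, and pass from the chordal to the Euclidean metric exactly as you describe. The side remark \eqref{eq:qstarbound} and the bookkeeping you anticipate for $x_0\in\partial\mathbb{B}^n$ are handled in the paper just as you suggest.

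There is, however, one concrete discrepancy in the key step. The paper does \emph{not} invoke Lemma~4.9 of \cite{RS} here, and your proposed choice ``$\psi\equiv 1$'' is not what produces the exponent~$\alpha$. Instead the paper applies \emph{Theorem~4.16} of \cite{RS}, which gives the raw exponential bound
\[
h\bigl(F(x),F(x_0)\bigr)\;\leqslant\;\alpha_n\,\exp\!\left\{-\int_{|x-x_0|}^{\varepsilon_0}\frac{dr}{r\,q_{x_0}^{*\,1/(n-1)}(r)}\right\},
\]
with no $\alpha$ in sight. The H\"older exponent is then extracted by an algebraic identity you have not written: dividing by $|x-x_0|^{\alpha}=\exp\{-\alpha\int_{|x-x_0|}^{1}dr/r\}$ and regrouping gives
\[
\frac{|f(x)-f(x_0)|}{|x-x_0|^{\alpha}}\;\leqslant\;\widetilde C_n\,\exp\!\left\{\int_{|x-x_0|}^{\varepsilon_0}\Bigl(\alpha-\frac{1}{q_{x_0}^{*\,1/(n-1)}(r)}\Bigr)\frac{dr}{r}\right\},
\]
whose right-hand side is precisely what hypothesis~\eqref{eq11A} bounds by some $M_0$ on a small ball $B(x_0,\widetilde\varepsilon_0)$. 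This identity is the mechanism linking \eqref{eq11A} to the conclusion; your sentence ``condition \eqref{eq11A} is precisely the integrability requirement that makes that lemma yield $h(F(x),F(x_0))\leqslant C_1|x-x_0|^{\alpha}$'' skips it, and Lemma~4.9 with $\psi\equiv 1$ does not supply it. Replace that step by Theorem~4.16 of \cite{RS} plus the displayed identity and your argument is complete and coincides with the paper's.
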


\begin{proof}
The proof verbatim builds points II-VI of Theorem~\ref{th4}, in
particular, the possibility of extending the mapping $f$ to the
whole space to a mapping $F$ from the same class as the original
map. Based on considerations similar to the proof of this theorem,
the map $F$ is a ring $Q^*$-map.

\medskip Denote by $F_Q$ the family of all homeomorphisms of the
class $W^{1,\varphi}({\Bbb B}^n)$ of the unit ball onto itself
satisfying the condition $f(0)=0,$ for which $K_I (x, f)\leqslant
Q(x)$ a.e. $x\in {\Bbb B}^n.$ In accordance with the above, every
mapping $F$ defined by formula~(\ref{eq1C}), where $f$ is taken from
the hypothesis of the theorem, belongs to the class $F_Q.$  Note
also that all such mappings obviously do not take the values 0 and
$\infty$ in the domain ${\Bbb R}^n\setminus\{0\}.$
Let $h$ be a chordal metric in $\overline{{\Bbb R}^n},$
%
$$
h(x,\infty)=\frac{1}{\sqrt{1+{|x|}^2}}, \ \
h(x,y)=\frac{|x-y|}{\sqrt{1+{|x|}^2} \sqrt{1+{|y|}^2}}\,, \quad x\ne
\infty\ne y\,,
$$
%
and let $h(E):=\sup\limits_{x,y\in E}\,h(x,y)$ be a chordal diameter
of a set~$E\subset \overline{{\Bbb R}^n}$ (see, e.g.,
\cite[Definition~12.1]{Va$_2$}). Based on the above formula,
$h({\Bbb R}^n\setminus\{0\})=1.$  Let $\varepsilon_0>0$ be the
number from the hypothesis of the theorem, then
by~\cite[Theorem~4.16]{RS} we have the estimate
\begin{equation}\label{eq2.4.3A}
 h(F(x),F(x_0))\leqslant \alpha_n\cdot
\exp\left\{-\int\limits_{|x-x_0|}^{\varepsilon_0}
\frac{dr}{rq_{x_0}^{\,*\,\frac{1}{n-1}}(r)}\right\}\,,
\end{equation}
for $x\in B(x_0, \varepsilon_0)$ and any $x_0\in {\Bbb
R}^n\setminus\{0\},$ where $\alpha_n$ depends only on $n.$ Note that
a similar estimate holds for the point $x_0=0,$ since the same
reasoning is applicable to the mapping $f$ in the unit ball ${\Bbb
B}^n,$ in addition, $h(\overline{{\Bbb R}^n}\setminus{\Bbb B}^n)=1.$
Since $h(x, y)\geqslant \frac{|x-y|}{1+r^2_0}$ for $x,y\in
\overline{B(0, r_0)},$ and $F|_{\overline{{\Bbb B}^n}}=f,$ it
follows from~(\ref{eq2.4.3A}) that
\begin{equation}\label{eq13}
|f(x)-f(x_0)|\leqslant 2\alpha_n\cdot
\exp\left\{-\int\limits_{|x-x_0|}^{\varepsilon_0}
\frac{dr}{rq_{x_0}^{\,*\,\frac{1}{n-1}}(r)}\right\}
\end{equation}
where $\alpha_n$ depends only on $n.$
Observe that
$$\frac{\exp\left\{-\int\limits_{t}^{\varepsilon_0}
\frac{dr}{rq_{x_0}^{\frac{1}{n-1}}(r)}\right\}}{{t^{\alpha}}}=$$
\begin{equation}\label{eq14A}
\frac{\exp\left\{-\int\limits_{t}^{\varepsilon_0}
\frac{dr}{rq_{x_0}^{\frac{1}{n-1}}(r)}\right\}}
{{\exp\left\{-\alpha\int\limits_{t}^1\frac{dr}{r}\right\}}} =
\exp\left\{\int\limits_{t}^{\varepsilon_0} \frac{\alpha
dr}{r}-\int\limits_{t}^{\varepsilon_0}\frac{dr}{rq_{x_0}^{\frac{1}{n-1}}(r)}\right\}=
\end{equation}
$$
=\exp\left\{\int\limits_{t}^{\varepsilon_0}\left(\alpha-\frac{1}{q^{1/(n-1)}_{x_0}(r)}\right)\cdot
\frac{dr}{r}\right\}\,.$$
Dividing the left side of~(\ref{eq13}) by $|x-x_0|^{\alpha}$ and
taking into account~(\ref{eq14A}), we obtain that
\begin{equation}\label{eq5A}\frac{|f(x)-f(x_0)|}{|x-x_0|^{\alpha}}\leqslant
\widetilde{C_n}\cdot
\exp\left\{\int\limits_{|x-x_0|}^{\varepsilon_0}\left(\alpha-\frac{1}{q^{\,*\,1/(n-1)}_{x_0}(r)}\right)\cdot
\frac{dr}{r}\right\}\,,
\end{equation}
where $\widetilde{C_n}=2\alpha_n.$

By~(\ref{eq11A}) there exists~$M_0>0,$ depending only on~$n,$
$\alpha$ and~$Q$ such that
\begin{equation}\label{eq13B}
\exp\left\{\int\limits_{|x-x_0|}^{\varepsilon_0}\left(\alpha-\frac{1}{q^{\,*\,1/(n-1)}_{x_0}(r)}\right)\cdot
\frac{dr}{r}\right\}\leqslant M_0\qquad \forall\,\,x\in B(x_0,
\widetilde{\varepsilon_0})\setminus\{x_0\}
\end{equation}
for some $0<\widetilde{\varepsilon_0}<\varepsilon_0.$ Now,
by~(\ref{eq5A}) and~(\ref{eq13B}) we obtain that
$$|f(x)-f(x_0)|\leqslant C\cdot|x-x_0|^{\alpha}\quad \forall\,\, x\in B(x_0, \widetilde{\varepsilon_0})\,,$$
where $C=\widetilde{C_n}\cdot M_0.$ Theorem is proved.~$\Box$
\end{proof}

\medskip
\begin{corollary}\label{cor1}
{\sl Let $\alpha\in (0, 1]$ and let $\varphi: (0, \infty)\rightarrow
[0, \infty)$ be a non-decreasing Lebesgue measurable function
with~(\ref{eq7A}). Let $Q:{\Bbb B}^n\rightarrow [0, \infty]$ be
integrable function in ${\Bbb B}^n.$ Suppose also that there exist
$C>0$ and $T>0$ such that~(\ref{eq30}) holds. Assume that $f$ is a
homeomorphism of ${\Bbb B}^n$ onto ${\Bbb B}^n$ such that $f\in
W^{1, \varphi}({\Bbb B}^n)$ and, in addition, $f(0)=0.$
Let, moreover, $K_I(x, f)\leqslant Q(x)$ for a.e. $x\in {\Bbb B}^n$
and, besides that, the relation~(\ref{eq11A}) holds for any $x_0\in
\overline{{\Bbb B}^n},$ some $0<\varepsilon_0<1/2,$ where
$q^*_{x_0}(r)$ is defined in~(\ref{eq12D}). Then $f$ has a
homeomorphic extension $f:\overline{{\Bbb B}^{n}}\rightarrow
\overline{{\Bbb B}^{n}}.$ Moreover, for any $x_0\in \overline{{\Bbb
B}^n}$ there exists $C>0$ and
$0<\widetilde{\varepsilon_0}<\varepsilon_0$ depending only on $n,$
$x_0$ and $Q$ such that~(\ref{eq12B}) holds. }
\end{corollary}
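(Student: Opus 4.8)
The plan is to deduce the statement from Theorem~\ref{th3}. The present hypotheses already supply everything Theorem~\ref{th3} asks for at a given point $x_0$ — the conditions of Theorem~\ref{th4} on $\varphi$ and $Q$, the bound $K_I(x,f)\leqslant Q(x)$ a.e., and relation~(\ref{eq11A}) with $q^*_{x_0}$ as in~(\ref{eq12D}) — \emph{except} for the requirement that $f$ admit a homeomorphic extension $f:\overline{{\Bbb B}^n}\rightarrow\overline{{\Bbb B}^n}$. So the whole task is to produce this extension out of~(\ref{eq11A}), after which one applies Theorem~\ref{th3} at every $x_0\in\overline{{\Bbb B}^n}$.

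First I would record that $f$ is itself a ring $Q$-homeomorphism of ${\Bbb B}^n$: this uses only $f\in W^{1,\varphi}({\Bbb B}^n)$, Calderon's condition~(\ref{eq7A}), $K_I(x,f)\leqslant Q(x)$ a.e.\ and $Q\in L^1({\Bbb B}^n)$, so~\cite[Theorem~2.2]{KR} (cf.~\cite[Corollary~9]{KRSS}) applies with no appeal to the boundary. Next, fix $\zeta\in{\Bbb S}^{n-1}$. Since $Q\geqslant 0$, $Q^*=Q$ on ${\Bbb B}^n$, and $Q$ is extended by zero outside ${\Bbb B}^n$ in the definition~(\ref{eq12D*}) of $q_\zeta$, one has for all small $r>0$
$$\omega_{n-1}r^{n-1}q^*_\zeta(r)=\int\limits_{|x-\zeta|=r}Q^*\,d\mathcal{H}^{n-1}\geqslant\int\limits_{\{|x-\zeta|=r\}\cap{\Bbb B}^n}Q\,d\mathcal{H}^{n-1}=\omega_{n-1}r^{n-1}q_\zeta(r),$$
so $q_\zeta(r)\leqslant q^*_\zeta(r)$ and hence $\int_t^{\varepsilon_0}\frac{dr}{rq_\zeta^{1/(n-1)}(r)}\geqslant\int_t^{\varepsilon_0}\frac{dr}{rq_\zeta^{*\,1/(n-1)}(r)}$ for $0<t<\varepsilon_0$. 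But~(\ref{eq11A}) at $x_0=\zeta$ is equivalent to the existence of a finite $M$ with $\int_t^{\varepsilon_0}\frac{dr}{rq_\zeta^{*\,1/(n-1)}(r)}\geqslant\alpha\log\frac{\varepsilon_0}{t}-M$ for $0<t<\varepsilon_0$, and since $\alpha>0$ this forces $\int_0^{\varepsilon_0}\frac{dr}{rq_\zeta^{1/(n-1)}(r)}=+\infty$.

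Divergence of this last integral is precisely one of the classical sufficient conditions for continuous boundary extension of ring $Q$-homeomorphisms: applying the boundary behavior theory to the ring $Q$-homeomorphism $f:{\Bbb B}^n\rightarrow{\Bbb B}^n$ at $\zeta$ (cf.~\cite[Theorem~1]{Sev$_1$} and the boundary-distortion estimates of~\cite{RS} used in the proof of Theorem~\ref{th3}), the oscillation ${\rm diam}\,f\big(B(\zeta,r)\cap{\Bbb B}^n\big)$ tends to $0$ as $r\rightarrow 0$, so the cluster set of $f$ at $\zeta$ is a single point and $f$ extends continuously to $\zeta$. Doing this at every $\zeta\in{\Bbb S}^{n-1}$ gives a continuous extension $f:\overline{{\Bbb B}^n}\rightarrow\overline{{\Bbb B}^n}$, which is then a homeomorphism of $\overline{{\Bbb B}^n}$ onto itself by~\cite[Lemma~6]{Sm}, exactly as in point~I of the proof of Theorem~\ref{th4}. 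With this extension available, the complete list of hypotheses of Theorem~\ref{th3} holds at every $x_0\in\overline{{\Bbb B}^n}$; applying that theorem to each $x_0$ produces $C>0$ and $0<\widetilde{\varepsilon_0}<\varepsilon_0$ depending only on $n$, $x_0$ and $Q$ for which~(\ref{eq12B}) holds, which is the assertion.

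The crux is the middle step: condition~(\ref{eq11A}) is designed to drive the \emph{interior} H\"older estimate of Theorem~\ref{th3} and a priori says nothing about the boundary, so one must show that it is nonetheless strong enough to control the boundary cluster sets of $f$. The two facts that make this work are the elementary monotonicity $q_\zeta\leqslant q^*_\zeta$ on ${\Bbb S}^{n-1}$ and the reformulation of~(\ref{eq11A}) as divergence of $\int_0^{\varepsilon_0}dr/(rq_\zeta^{*\,1/(n-1)}(r))$; together they reduce the extension problem to the standard sufficient condition for continuous boundary extension of ring $Q$-homeomorphisms, so that — in contrast with Theorem~\ref{th4} — neither finite mean oscillation of $Q$ nor condition~(\ref{eq1AE}) enters here.
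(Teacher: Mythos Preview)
Your proposal is correct and follows essentially the same route as the paper: establish that $f$ is a ring $Q$-homeomorphism via~\cite[Theorem~2.2]{KR}, extract from~(\ref{eq11A}) the divergence $\int_0^{\varepsilon_0}\frac{dr}{rq_{x_0}^{1/(n-1)}(r)}=+\infty$ at each boundary point, invoke~\cite[Theorem~1]{Sev$_1$} and~\cite[Lemma~6]{Sm} for the homeomorphic extension, and then apply Theorem~\ref{th3}. The only differences are cosmetic: the paper obtains the divergence by a short contradiction argument (if the integral stayed bounded, the left side of the identity~(\ref{eq14A}) divided by $t^\alpha$ would blow up, contradicting~(\ref{eq11A})), whereas you rewrite~(\ref{eq11A}) directly as the lower bound $\int_t^{\varepsilon_0}\frac{dr}{rq^{*\,1/(n-1)}_\zeta(r)}\geqslant\alpha\log\frac{\varepsilon_0}{t}-M$; and you make the comparison $q_\zeta\leqslant q^*_\zeta$ explicit, while the paper uses it tacitly in passing from~(\ref{eq14A}) to~(\ref{eq14}). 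One small nit: your phrase ``for $0<t<\varepsilon_0$'' should strictly be ``for all sufficiently small $t$'', since a $\limsup$ bound only controls the tail --- but this does not affect the conclusion.
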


\medskip
\begin{proof}
Taking into account~(\ref{eq11A}) and~(\ref{eq14A}), we obtain that
\begin{equation}\label{eq14}
\frac{\exp\left\{-\int\limits_{t}^{\varepsilon_0}
\frac{dr}{rq_{x_0}^{\frac{1}{n-1}}(r)}\right\}}{{t^{\alpha}}}
\leqslant
\exp\left\{\int\limits_{t}^{\varepsilon_0}\left(\alpha-\frac{1}{q^{*\,1/(n-1)}_{x_0}(r)}\right)\cdot
\frac{dr}{r}\right\}\,.
\end{equation}

It follows from~(\ref{eq14}) that $\int\limits_{t}^{\varepsilon_0}
\frac{dr}{rq_{x_0}^{\frac{1}{n-1}}(r)}\rightarrow\infty$ as
$t\rightarrow 0$ for any $x_0\in\overline{{\Bbb B}^n}.$ Indeed, the
function $\alpha(t):=\exp\left\{-\int\limits_{t}^{\varepsilon_0}
\frac{dr}{rq_{x_0}^{\frac{1}{n-1}}(r)}\right\}$ is monotone by $t,$
thus, it has a limit as $t\rightarrow 0.$ Assume that
$\alpha(t)\rightarrow A$ and $A\ne 0,$ now
$$\frac{\exp\left\{-\int\limits_{t}^{\varepsilon_0}
\frac{dr}{rq_{x_0}^{\frac{1}{n-1}}(r)}\right\}}{{t^{\alpha}}}\rightarrow\infty$$
as $t\rightarrow 0.$ Now, by~(\ref{eq14}) we obtain that
$\exp\left\{\int\limits_{t}^{\varepsilon_0}\left(\alpha-\frac{1}{q^{*\,1/(n-1)}_{x_0}(r)}\right)\cdot
\frac{dr}{r}\right\}\rightarrow \infty$ as $t\rightarrow 0,$ that
contradicts~(\ref{eq11A}). The contradiction obtained above prove
that $\alpha(t)\rightarrow 0,$ which implies
$\int\limits_{t}^{\varepsilon_0}
\frac{dr}{rq_{x_0}^{\frac{1}{n-1}}(r)}\rightarrow\infty$ as
$t\rightarrow 0,$ that is desired conclusion.

\medskip
By~\cite[Theorem~2.2]{KR} the map $f$ is a ring $Q$-map at each
point $x_0\in \overline{{\Bbb B}^n}$ for $Q=K_I(x, f).$ In this
case, the possibility of homeomorphic extension of the mapping $f$
onto ${\Bbb S}^{n-1}=\partial{\Bbb B}^n$ follows
by~\cite[Theorem~1]{Sev$_1$}.
We also note that the map $f$ extends to a homeomorphism of the unit
ball $\overline{{\Bbb B}^n}$ onto itself, see, for example,
\cite[Lemma~6]{Sm}. In this case, the desired conclusion follows
from Theorem~\ref{th3}.~$\Box$
\end{proof}

\medskip
\begin{example}\label{ex4}
We give an example of a mapping whose characteristic satisfies
condition~(\ref{eq11A}) with $\alpha=1.$ Of course, an arbitrary
conformal mapping is such, since its inner and outher dilatations
are equal to 1, in addition, the map itself belongs to the class
$W^{1, n}_{\rm loc}$ and, therefore, also belongs to the class
$W^{1, \varphi}_{\rm loc}$ for example, for $\varphi=t^p,$ $p>n-1$
(see e.g.~\cite[3.I]{Re}). Since examples of such mappings are
elementary, we will not dwell on them. We give an example of a map
corresponding to condition~(\ref{eq11A}) with $\alpha=1,$ the
dilatations of which are not bounded in a neighborhood of the point
under consideration. For this purpose, we use the idea used in the
construction of Example~\ref{ex1}. We restrict ourselves to the case
$n=3.$

We consider the infinite partition of the segment~$[0, 1]$ by points
$\left[\frac{1}{k+1}, \frac{1}{k}\right],$ $k=1,2,3,\ldots \,.$ We
consider the following function $\beta:(0, 1]\rightarrow {\Bbb R},$
defined as follows:
\begin{equation}\label{eq2E}\beta(t)= \left\{
\begin{array}{rr}
1, & \left[\frac{1}{k+1}, \frac{1}{k}-2^{-4k-1}\right]\\
2^{k-1},  & t\in\left(\frac{1}{k}-2^{-4k-1}, \frac{1}{k}\right)\,,
\end{array}
\right.
\end{equation}
$k=1,2,3,\ldots .$
Setting $Q(x)=\beta(|x|),$ we obtain by~(\ref{eq12D*}) that
$q_0(r)=q^{\,*}_0(r)=\beta(r).$ Set $\varepsilon_0:=1/4$ and
$0<a<1.$ Let $k_0\in{\Bbb N}$ be a number such that $a\in
\left(\frac{1}{k_0+1}, \frac{1}{k_0}\right].$ Let us verify the
fulfillment of condition~(\ref{eq11A}) for $\alpha=1$ and
$\varepsilon_0=1/4$ at $x_0=0.$ We obtain that
$$\int\limits_{a}^{\varepsilon_0}
\frac{dr}{rq_0^{1/2}(r)}=
\sum\limits_{k=4}^{k_0-1}\int\limits_{\frac{1}{k+1}}^{\frac{1}{k}-2^{-4k-1}}\,\frac{dr}{r}+
\sum\limits_{k=4}^{k_0-1}\int\limits_{\frac{1}{k}-2^{-4k-1}}^{\frac{1}{k}}\frac{2^{(1-k)/2}\,dr}{r}
+ \int\limits_a^{\frac{1}{k_0}}\frac{dr}{r\beta^{1/2}(r)}\geqslant$$
\begin{equation}\label{eq2A}
\geqslant\sum\limits_{k=4}^{k_0-1}\int\limits_{\frac{1}{k+1}}^{\frac{1}{k}-2^{-4k-1}}\,\frac{dr}{r}\,.
\end{equation}
Observe that
$$\sum\limits_{k=4}^{k_0-1}\int\limits_{\frac{1}{k+1}}^{\frac{1}{k}-2^{-4k-1}}\,\frac{dr}{r}=
\sum\limits_{k=4}^{k_0-1}\int\limits_{\frac{1}{k+1}}^{\frac{1}{k}}\,\frac{dr}{r}-
\sum\limits_{k=4}^{k_0-1}\int\limits_{\frac{1}{k}-2^{-4k-1}}^{\frac{1}{k}}\,\frac{dr}{r}=$$
\begin{equation}\label{eq5F}
=\ln\frac{k_0}{4}-\sum\limits_{k=4}^{k_0-1}\int\limits_{\frac{1}{k}-2^{-4k-1}}^{\frac{1}{k}}\,\frac{dr}{r}\geqslant
\ln\frac{k_0}{4}-\sum\limits_{k=4}^{k_0-1}\frac{2^{-4k-1}}{\frac{1}{k}-2^{-4k-1}}\geqslant
\ln\frac{k_0}{4}-c\end{equation}
for some $0<c<\infty$ because the series
$\sum\limits_{k=4}^{\infty}\frac{2^{-4k-1}}{\frac{1}{k}-2^{-4k-1}}=
\sum\limits_{k=4}^{\infty}\frac{k\cdot 2^{-4k-1}}{1-k\cdot
2^{-4k-1}}$ converges, for example, on the Cauchy principle.  Thus,
by~(\ref{eq2A}),
\begin{equation}\label{eq3U}
\int\limits_{a}^{\varepsilon_0} \frac{dr}{rq_0^{1/2}(r)}\geqslant
\ln\frac{k_0}{4}-c\,.
\end{equation}
By~(\ref{eq3U}),
\begin{equation}\label{eq3V}
\exp\left\{-\int\limits_{a}^{\varepsilon_0}
\frac{dr}{rq_0^{1/2}(r)}\right\}\leqslant e^{\,c}\cdot
(4/k_0)=4e^{\,c}\cdot \frac{k_0+1}{k_0(k_0+1)}\leqslant
\frac{8e^{\,c}}{k_0+1}\leqslant 8e^{\,c}\cdot a\,,
\end{equation}
because $a\geqslant 1/(k_0+1)$ by the choice of $a.$ Thus,
by~(\ref{eq3V})
\begin{equation}\label{eq1V}
\lim\limits_{a\rightarrow +0}
\frac{\exp\left\{-\int\limits_{a}^{\varepsilon_0}
\frac{dr}{rq_0^{1/2}(r)}\right\}}{a}\leqslant 8e^{\,c}<\infty\,,
\end{equation}
as required. Finally, the fulfillment of relation~(\ref{eq11A})
follows on the basis of~(\ref{eq14}) and (\ref{eq1V}). It should
also be noted that condition~(\ref{eq11A}) is satisfied in the
neighborhood of any point of the unit sphere, since in some of its
neighborhood the map $f$ is identical, and the corresponding
function $Q$ is 1.

\medskip
Guided by Proposition~6.15 in~\cite{RSY$_1$}, by analogy, we
construct the desired spatial map as follows:
$$f(x)=\frac{x}{|x|}e^{\int\limits_{1}^{|x|}(\beta(t)/t)\,dt}\,,\quad f(0):=0\,.$$
Note that the map $f,$ defined in this way, is a homeomorphism. We
verify that all the conditions of Theorem~\ref{th3} are satisfied.
Indeed, by~\cite[Proposition~6.3]{MRSY$_1$}, we may calculate the
tangential, radial, inner dilatations of the map $f$ and the matrix
norm of $f^{\,\prime}(x)$ using the following formulas:
$$\delta_{\tau}(x)=\frac{|f(x)|}{|x|}=e^{\int\limits_{1}^{|x|}(\beta(t)/t)\,dt}\,,
\quad \delta_r(x)=\frac{\partial|f(x)|}{\partial
|x|}=e^{\int\limits_{1}^{|x|}(\beta(t)/t)\,dt}\cdot\frac{\beta(|x|)}{|x|}\,,$$
$$\Vert f^{\,\prime}(x)\Vert=\max\{\delta_{\tau},\delta_r\}=
e^{\int\limits_{1}^{|x|}(\beta(t)/t)\,dt}\cdot\frac{\beta(|x|)}{|x|}\,,\quad
K_I(x, f)=\beta(|x|)\,.$$
Note that the norm of the map $f^{\,\prime}(x)$ is locally bounded
in ${\Bbb B}^3\setminus\{0\}$; therefore, by virtue of
inequality~(\ref{eq1}), all partial derivatives of the mapping that
exist almost everywhere are also locally bounded. From this, in
particular, it follows that the map $f$ belongs to the class $ACL$
in ${\Bbb B}^3.$

\medskip
We now note that the function $\varphi(t)=t^3$ satisfies the
Calderon condition~(\ref{eq7A}). Let us verify that the map $f$
belongs to the class $W^{1, \varphi}({\Bbb B}^3).$ Indeed, by Fubini
theorem,
$$\int\limits_{{\Bbb B}^3}\Vert
f^{\,\prime}(x)\Vert^3\,dm(x)=\int\limits_{{\Bbb
B}^3}e^{3\int\limits_{1}^{|x|}(\beta(t)/t)\,dt}\cdot\frac{\beta^3(|x|)}{|x|^3}\,dm(x)=
$$
$$=4\pi\int\limits_0^1
e^{3\int\limits_{1}^{r}(\beta(t)/t)\,dt}\cdot\frac{\beta^3(r)}{r}\,dr
=4\pi\sum\limits_{k=1}^{\infty}
\int\limits_{\frac{1}{k+1}}^{\frac{1}{k}-2^{-4k-1}}\,r^2\,dr+$$$$+
4\pi\sum\limits_{k=1}^{\infty}\int\limits_{\frac{1}{k}-2^{-4k-1}}^{\frac{1}{k}}
e^{3\int\limits_{1}^{r}(\beta(t)/t)\,dt}\cdot\frac{\beta^3(r)}{r}\,dr\leqslant$$
$$\leqslant 4\pi\int\limits_0^1r^2\,dr+4\pi\sum\limits_{k=1}^{\infty}\int\limits_{\frac{1}{k}-2^{-4k-1}}^{\frac{1}{k}}
\frac{2^{3k-3}}{r}\,dr\leqslant$$
\begin{equation}\label{eq16A}
\leqslant(4/3)\pi+4\pi
\sum\limits_{k=1}^{\infty}\frac{2^{3k-3}}{\frac{1}{k}-2^{-4k-1}}\cdot
2^{-4k-1}<\infty\,.
\end{equation}
Since $f\in ACL,$ it follows from~(\ref{eq16A}) that $f\in W^{1,
\varphi}({\Bbb B}^3).$

\medskip
We show that also $Q(x)=K_I(x, f)=\beta(|x|)\in L^1({\Bbb B}^n).$ In
fact,
$$\int\limits_{{\Bbb B}^3} K_I(x, f)\,dm(x)=4\pi\int\limits_0^1 r^2\cdot \beta(r)\,dr=$$
$$=4\pi\sum\limits_{k=1}^{\infty}
\int\limits_{\frac{1}{k+1}}^{\frac{1}{k}-2^{-4k-1}}\,r^2\,dr+
4\pi\sum\limits_{k=1}^{\infty}\int\limits_{\frac{1}{k}-2^{-4k-1}}^{\frac{1}{k}}
r^2\cdot 2^{k-1}\,dr\leqslant$$
$$\leqslant (4\pi)/3+ 4\pi\sum\limits_{k=1}^{\infty}2^{-4k-1} \cdot 2^{k-1}<\infty\,.$$
Thus, all the conditions of Theorem~\ref{th3} are satisfied. Note
that the map $f$ is even Lipschitz on the unit sphere, since it
identically maps in some neighborhood of it. According to this
theorem, the map $f$ is Lipschitz at the point 0, and also on the
boundary of the unit ball.
\end{example}

\section{An extended version of Theorem~\ref{th3} in planar case}\label{s4}

On the plane, Theorem~\ref{th3} looks somewhat simpler; in
particular, in approximately the same classes of mappings, the
Calderon condition~(\ref{eq7A}) is not required. To state an
analogue of this theorem, we introduce the following notations.

Let $D$ be a domain in ${\Bbb C}.$ In what follows, a mapping
$f:D\rightarrow{\Bbb C}$ is assumed to be {\it sense-preserving.}
The following result is fairly close to~\cite[Theorem~1.1]{RSS},
although here, in contrast to~\cite{RSS}, the corresponding property
is established  at the boundary rather than the inner point of the
domain.

\medskip
\begin{theorem}\label{th1}{\sl\,
Let $\alpha\in (0, 1],$ and let $f$ be a homeomorphism of ${\Bbb
B}^2$ onto ${\Bbb B}^2$ such that $f\in W_{\rm loc}^{1, 1}({\Bbb
B}^2)$ and $f(0)=0.$
Let, moreover,
\begin{equation}\label{eq11B}
\limsup\limits_{t\rightarrow
0}\int\limits_t^{\varepsilon_0}\left(\alpha-\frac{1}{q^*_{z_0}(r)}\right)\cdot
\frac{dr}{r}<+\infty
\end{equation}
for some $0<\varepsilon_0<1/2$ and some $z_0\in \overline{{\Bbb
B}^2},$ where $q^*_{z_0}(r)$ is defined for $Q\in L^1({\Bbb B}^2)$
in~(\ref{eq12D}), and $K_I(z, f)\leqslant Q(z)$ a.e. in ${\Bbb
B}^2.$ If $f$ has a homeomorphic extension $f:\overline{{\Bbb
B}^2}\rightarrow \overline{{\Bbb B}^2},$ then there is $C>0$ and
$0<\widetilde{\varepsilon_0}<\varepsilon_0$ depending only on $z_0$
and $Q$ such that
\begin{equation}\label{eq12E}
|f(z)-f(z_0)|\leqslant C|z-z_0|^{\alpha}\quad \forall\,\,z\in B(z_0,
\widetilde{\varepsilon_0})\cap\overline{{\Bbb B}^2}\,.
\end{equation}}
\end{theorem}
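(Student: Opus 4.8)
The plan is to run the argument of Theorem~\ref{th3} in dimension two, with the Gehring--Lehto theorem replacing the differentiability and absolute-continuity-on-spheres properties that, for $n\ge 3$, were supplied by the Calderon condition~(\ref{eq7A}); this is precisely why no analogue of~(\ref{eq7A}) is needed here. First I would extend $f$ across $\mathbb S^1$ by the inversion $\psi(z)=z/|z|^2$, setting $F=f$ on ${\Bbb B}^2$ and $F(z)=\psi(f(\psi(z)))$ on ${\Bbb R}^2\setminus\overline{{\Bbb B}^2}$, exactly as in~(\ref{eq1C}). Since $f$ is assumed to extend to a homeomorphism of $\overline{{\Bbb B}^2}$ onto itself and $f(0)=0$, the map $F$ is a homeomorphism of ${\Bbb R}^2\setminus\{0\}$ onto itself which omits both $0$ and $\infty$. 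Using the chain rule~(\ref{eq2}), the identity $\Vert\psi^{\,\prime}(z)\Vert=|z|^{-2}$, the fact that $f\circ\psi$ is bounded away from $0$ on each compact annulus $1\le|z|\le R$ (a consequence of $f(0)=0$), and the Gehring--Lehto theorem --- planar $W^{1,1}_{\rm loc}$ homeomorphisms are differentiable a.e. and absolutely continuous on almost all circles, see \cite[Theorem~3.1]{LSS} and~\cite[Theorem~3.1]{LV-73} --- one checks, as in steps II--V of Theorem~\ref{th4}, that $F\in W^{1,1}_{\rm loc}(B(0,R))$ for every $R>1$ (integrability being needed only over compact subsets, which is all the hypothesis $f\in W^{1,1}_{\rm loc}({\Bbb B}^2)$ gives anyway).

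Next I would verify that $F$ is a ring $Q^*$-map in $B(0,R)$, where $Q^*(z)=Q(z)$ on ${\Bbb B}^2$ and $Q^*(z)=Q(\psi(z))$ on $B(0,R)\setminus{\Bbb B}^2$. This rests on two facts: (i) a planar $W^{1,1}_{\rm loc}$ homeomorphism whose inner dilatation is dominated a.e. by a locally integrable function is a ring $Q$-map (Gehring--Lehto together with the modulus estimate, cf.~\cite[Theorem~3.1]{LSS}); and (ii) the inner dilatation $K_I$ is invariant under Möbius transformations, whence $K_I(\cdot,F)\le Q^*$ a.e.\ in $B(0,R)\setminus\{0\}$. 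Local integrability of $Q^*$ near $\mathbb S^1$ follows from the change of variables $z\mapsto\psi(z)$ together with the bound $|z|^{-2n}\le 4^n$ on a thin one-sided neighbourhood of the sphere, exactly as in step~VIII of Theorem~\ref{th4}. This item~(i) is the one ingredient genuinely different from the proof of Theorem~\ref{th3}, and I expect it to be the main (such as it is) obstacle.

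Then I would apply the distance distortion estimate for ring $Q^*$-maps. Since $F$ omits $0$ and $\infty$, \cite[Theorem~4.16]{RS} with $n=2$ (so $q^{*\,1/(n-1)}_{z_0}=q^*_{z_0}$) gives, in the chordal metric $h$,
$$
h(F(z),F(z_0))\le\alpha_2\exp\left\{-\int_{|z-z_0|}^{\varepsilon_0}\frac{dr}{r\,q^*_{z_0}(r)}\right\},\qquad z\in B(z_0,\varepsilon_0),
$$
for every $z_0\in{\Bbb R}^2\setminus\{0\}$; the same bound holds at $z_0=0$ by applying the estimate directly to $f$ on ${\Bbb B}^2$, whose image omits ${\Bbb R}^2\setminus{\Bbb B}^2$ and $\infty$ (a set of chordal diameter $1$). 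Using $h(x,y)\ge|x-y|/(1+r_0^2)$ on $\overline{B(0,r_0)}$ and $F|_{\overline{{\Bbb B}^2}}=f$, this converts to
$$
|f(z)-f(z_0)|\le 2\alpha_2\exp\left\{-\int_{|z-z_0|}^{\varepsilon_0}\frac{dr}{r\,q^*_{z_0}(r)}\right\}.
$$

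Finally I would divide by $|z-z_0|^{\alpha}$ and invoke the elementary identity
$$
\frac{\exp\left\{-\int_t^{\varepsilon_0}\frac{dr}{r\,q^*_{z_0}(r)}\right\}}{t^{\alpha}}=\varepsilon_0^{-\alpha}\exp\left\{\int_t^{\varepsilon_0}\Bigl(\alpha-\frac{1}{q^*_{z_0}(r)}\Bigr)\frac{dr}{r}\right\},
$$
which is the planar instance of~(\ref{eq14A}). By hypothesis~(\ref{eq11B}) the exponent on the right is bounded above by a constant $M_0=M_0(z_0,Q)$ for all $z$ with $|z-z_0|<\widetilde{\varepsilon_0}$ and a suitable $0<\widetilde{\varepsilon_0}<\varepsilon_0$; combining this with the preceding display yields~(\ref{eq12E}) with $C=2\alpha_2\varepsilon_0^{-\alpha}M_0$. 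The only care needed throughout is that the hypothesis provides local, not global, Sobolev integrability, so every integrability assertion above should be read over compact subsets of $B(0,R)$ --- which is exactly what the ring $Q$-map property and \cite[Theorem~4.16]{RS} require.
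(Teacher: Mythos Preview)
Your overall strategy matches the paper's, but there is a real gap in the step where you claim $F\in W^{1,1}_{\rm loc}(B(0,R))$. You write that ``integrability being needed only over compact subsets, which is all the hypothesis $f\in W^{1,1}_{\rm loc}({\Bbb B}^2)$ gives anyway,'' but this is precisely the problem: a compact subset of $B(0,R)$ can straddle ${\Bbb S}^1$, and on such a set you need $|\nabla f|$ integrable on an annulus $\{r_0<|z|<1\}$, which is \emph{not} relatively compact in ${\Bbb B}^2$. The hypothesis $f\in W^{1,1}_{\rm loc}({\Bbb B}^2)$ alone does not supply this, and steps II--V of Theorem~\ref{th4} to which you appeal were carried out under the \emph{global} assumption $f\in W^{1,\varphi}({\Bbb B}^n)$, not a local one. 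Your chain-rule computation on $\{1<|z|<R\}$ leads, after the change of variables $y=\psi(z)$, to $\int_{1/R<|y|<1}\Vert f^{\,\prime}(y)\Vert\,|y|^{-4}\,dm(y)$, and you have no way to bound this from $W^{1,1}_{\rm loc}$ alone.

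The paper fills this gap by a genuinely planar device that you have omitted: in dimension two one has the pointwise identity $\Vert f^{\,\prime}(z)\Vert^{2}=K_I(z,f)\cdot|J(z,f)|$ a.e., so by the Cauchy--Schwarz inequality
\[
\int_{{\Bbb B}^2}\Vert f^{\,\prime}(z)\Vert\,dm(z)\leqslant\Bigl(\int_{{\Bbb B}^2}K_I(z,f)\,dm(z)\Bigr)^{1/2}\Bigl(\int_{{\Bbb B}^2}|J(z,f)|\,dm(z)\Bigr)^{1/2}.
\]
The first factor is finite because $K_I\leqslant Q\in L^1({\Bbb B}^2)$, and the second because $f$ is a homeomorphism onto ${\Bbb B}^2$, so $\int_{{\Bbb B}^2}|J(z,f)|\,dm(z)\leqslant m(f({\Bbb B}^2))=\pi$. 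This upgrades $f$ to $W^{1,1}({\Bbb B}^2)$ globally, after which your reflection and chain-rule argument goes through exactly as written. Once this is in place, the remaining steps of your proposal --- the ring $Q^*$-map property via \cite[Theorem~3.1]{LSS}, the distortion estimate from \cite[Theorem~4.16]{RS}, and the exponential identity leading to~(\ref{eq12E}) --- coincide with the paper's steps III--V.
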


\begin{proof} \textbf{I.} Using conformal transformation
$\psi(z)=\frac{z}{|z|^2},$ we extend the mapping $f$
homeomorphically onto ${\Bbb C}$ as follows:
$$F(z)=\left \{\begin{array}{rr}  f(z) , & |z|<1\,,   \\
\psi(f(\psi(z))), &  |z|\geqslant 1\,.
\end{array} \right.$$
As usual, put
$$\Vert f^{\,\prime}(z)\Vert\,=\,\,\,\max\limits_{h\in {\Bbb C}
\setminus \{0\}} \frac {|f^{\,\prime}(z)h|}{|h|}\,.$$
By condition, $f\in W_{\rm loc}^{1, 1}({\Bbb B}^2),$ therefore also
$F\in W_{\rm loc}^{1, 1}({\Bbb B}^2).$ We show more, namely, that
$F\in W_{\rm loc}^{1, 1}(B(0, R))$ for any $R>1$ (in particular,
$\Vert F^{\,\prime}(z)\Vert$ is not only locally integrable in
${\Bbb B}^2,$ but also globally integrable).

\medskip
\textbf{II.} Since $\Vert f^{\,\prime}(z)\Vert^2=K_I(z, f)\cdot
|J(z, f)|$ a.e., and, in addition, the inner dilatation of $f$ does
not change under conformal mappings (see, e.g.,
\cite[I.4.(4.15)]{Re}), by the H\"{o}lder inequality we obtain that
\begin{equation}\label{eq28}
\int\limits_{{{\Bbb B}^2}} \,\Vert
F^{\,\prime}(z)\Vert\,dm(z)\leqslant
 \left(\int\limits_{{\Bbb B}^2} \,
 K_I(z, F)\,dm(z)\right)^{\frac{1}{2}}\cdot
\left(\int\limits_{{\Bbb B}^2}\,|J(z,
F)|\,dm(z)\right)^{\frac{1}{2}} \,.
\end{equation}
Since the map $F$ is a homeomorphism, by~\cite[Theorems~3.1.4, 3.1.8
and 3.2.5]{Fe} we obtain that
\begin{equation}\label{eq29}
\int\limits_{{\Bbb B}^2} \,J(z, F)\,dm(z)\leqslant m(F({\Bbb
B}^2))=\pi\,.
\end{equation}
Since $K_I(z, F)\in L^1({{\Bbb B}^2})$, it follows from~(\ref{eq28})
and~(\ref{eq29}) that
\begin{equation}\label{eq310}
\int\limits_{{\Bbb B}^2} \, \Vert F^{\,\prime}(z)\Vert\,dm(z)
\leqslant \left(\pi\int\limits_{{\Bbb B}^2} \, K_I(z,
f)\,dm(z)\right)^{\frac{1}{2}}<\infty\,.
\end{equation}
Reasoning in a similar way, we may also obtain similar relations for
the inner dilatation of the map $F.$ Indeed, by~(\ref{eq2})
and~(\ref{eq3}), for any $R>1$ we obtain that
$$
\int\limits_{B(0, R)} \Vert F^{\,\prime}(z)\Vert\,dm(z)=
\int\limits_{{\Bbb B}^2} \Vert f^{\,\prime}(z)\Vert\,dm(z)+
\int\limits_{1<|z|<R} (f(\psi(z)))^{-2}\cdot\Vert
F^{\,\prime}(\psi(z))\Vert\cdot |\psi(z)|^{\,-2} \,dm(z)\leqslant
$$
$$\leqslant \int\limits_{{\Bbb B}^2} \Vert f^{\,\prime}(z)\Vert\,dm(z)+
C\cdot \int\limits_{1<|z|<R} \Vert F^{\,\prime}(\psi(z))\Vert
\,dm(z)$$
for some $C>0.$
Making a change of variables here, and taking into account that
$K_I(z, f)\in L^1({\Bbb B}^2),$ we obtain that
$$\int\limits_{B(0, R)}\Vert F^{\,\prime}(z)\Vert\,dm(z)\leqslant
\int\limits_{{\Bbb B}^2} \Vert
f^{\,\prime}(z)\Vert\,dm(z)+C\int\limits_{1/R<|y|<1} \Vert
f^{\,\prime}(y)\Vert\cdot \frac{1}{|y|^{4}}\,dm(y)\leqslant$$
\begin{equation}\label{eq12A}
\leqslant \int\limits_{{\Bbb B}^2} \Vert
F^{\,\prime}(z)\Vert\,dm(z)+CR^{4}\cdot \int\limits_{1/R<|y|<1}
\Vert f^{\,\prime}(y)\Vert\,dm(y)<\infty.
\end{equation}

\textbf{III.} By virtue of Fubini's theorem and by~ (see, for
example,~\cite[Theorem~III.8.1]{Sa}) that the derivative of the
function $\varphi=f(z_0+te),$ $t\in [a, b],$ $e\in{\Bbb S}^1,$  is
integrable on almost all segments in $B(0, R)$ parallel to the
coordinate axes. In this case, arguing in a similar way to the proof
of item~V of Theorem~\ref{th4}, we may show that $F\in ACL ({\Bbb
C}).$

\medskip
\textbf{IV.} Since $F\in ACL ({\Bbb C}),$ by~(\ref{eq5}) $F\in
W_{\rm loc}^{1, 1}(B(0, R))$ for any $R>1.$ In this case, $F$ is a
ring $Q^*$-map at each point $z_0 \in B(0, R)$ with $Q^*=Q$ in
${\Bbb B}^2$ and $Q^*(y)=Q\left(\frac{y}{|y|^2}\right)$ otherwise
(see, e.g., \cite[Theorem~3.1]{LSS}, cf.~\cite[Theorem~3.1]{Sal}).

\medskip
\textbf{V.} In view of item IV and using
again~\cite[Theorem~4.16]{RS}, we may show that
\begin{equation}\label{eq5D}\frac{|f(z)-f(z_0)|}{|z-z_0|^{\alpha}}\leqslant
\widetilde{C}\cdot
\exp\left\{\int\limits_{|z-z_0|}^{\varepsilon_0}\left(\alpha-\frac{1}{q^{\,*\,}_{z_0}(r)}\right)\cdot
\frac{dr}{r}\right\}\,.
\end{equation}
By~(\ref{eq11B}), there exists~$M_0>0,$ depending only on~$\alpha$
and~$Q$ such that
\begin{equation}\label{eq13C}
\exp\left\{\int\limits_{|z-z_0|}^{\varepsilon_0}\left(\alpha-\frac{1}{q^{\,*\,}_{z_0}(r)}\right)\cdot
\frac{dr}{r}\right\}\leqslant M_0\qquad \forall\,\,z\in B(z_0,
\widetilde{\varepsilon_0})\setminus\{z_0\}\,.
\end{equation}
Combining~(\ref{eq5D}) and (\ref{eq13C}), we arrive at the desired
relation~(\ref{eq12E}) with $C=\widetilde{C}\cdot M_0.$~$\Box$
\end{proof}

\medskip
In particular, Theorem~\ref{th1} implies the following statement.

\medskip
\begin{corollary}\label{cor2}{\sl\,
Let $\alpha\in (0, 1],$ let $z_0\in {\Bbb S}^1=\partial {\Bbb B}^2,$
and let $f$ be a homeomorphism of ${\Bbb B}^2$ onto ${\Bbb B}^2$
such that $f\in W_{\rm loc}^{1, 1}({\Bbb B}^2)$ and $f(0)=0.$
Let, moreover, (\ref{eq11B}) holds for some $0<\varepsilon_0<1/2$
and any $z_0\in \overline{{\Bbb B}^2},$ where $q^*_{z_0}(r)$ is
defined for $Q\in L^1({\Bbb B}^2)$ in~(\ref{eq12D}), and $K_I(z,
f)\leqslant Q(z)$ a.e. in ${\Bbb B}^2.$ Then $f$ has a homeomorphic
extension $f:\overline{{\Bbb B}^2}\rightarrow \overline{{\Bbb
B}^2}.$ Moreover, for any $z_0\in \overline{{\Bbb B}^2}$ there is
$C>0$ and $0<\widetilde{\varepsilon_0}<\varepsilon_0$ depending only
on $z_0$ and $Q$ such that (\ref{eq12E}) holds.}
\end{corollary}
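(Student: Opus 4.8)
The plan is to run the same scheme as in the proof of Corollary~\ref{cor1}, with Theorem~\ref{th1} playing the role there played by Theorem~\ref{th3}, and with the Gehring-Lehto regularity of planar Sobolev homeomorphisms taking over the function served by the Calderon condition in higher dimensions. The one genuinely new point, compared with Theorem~\ref{th1}, is that here the homeomorphic extension of $f$ to $\overline{{\Bbb B}^2}$ is not part of the hypotheses and must be produced from~(\ref{eq11B}) before Theorem~\ref{th1} can be applied.

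First I would fix $z_0\in\overline{{\Bbb B}^2}$ and record the planar form ($n-1=1$) of identity~(\ref{eq14A}), namely
$$\frac{\exp\left\{-\int\limits_{t}^{\varepsilon_0}\frac{dr}{rq^{\,*}_{z_0}(r)}\right\}}{t^{\alpha}}=\varepsilon_0^{-\alpha}\exp\left\{\int\limits_{t}^{\varepsilon_0}\left(\alpha-\frac{1}{q^{\,*}_{z_0}(r)}\right)\frac{dr}{r}\right\},$$
which is just the elementary rewriting $t^{\alpha}=\exp\{-\alpha\int_t^1\frac{dr}{r}\}$. Combining this with~(\ref{eq11B}) and repeating verbatim the monotonicity argument from the proof of Corollary~\ref{cor1} — the function $\alpha(t):=\exp\{-\int_t^{\varepsilon_0}\frac{dr}{rq^{\,*}_{z_0}(r)}\}$ is monotone in $t$, hence has a limit $A\ge 0$ as $t\to 0$; were $A\neq 0$ the displayed quotient would blow up while its right-hand side stays bounded by~(\ref{eq11B}), a contradiction — I would conclude that $\int_0^{\varepsilon_0}\frac{dr}{rq^{\,*}_{z_0}(r)}=+\infty$ for every $z_0\in\overline{{\Bbb B}^2}$, in particular for every $z_0\in{\Bbb S}^1$.

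Next I would reproduce items~I--IV of the proof of Theorem~\ref{th1}: extend $f$ across ${\Bbb S}^1$ by the inversion $\psi(z)=z/|z|^2$ to a map $F\in W^{1,1}_{\rm loc}(B(0,R))$ for every $R>1$, which — using that a planar $W^{1,1}_{\rm loc}$-homeomorphism is differentiable a.e. and absolutely continuous on almost all circles (Gehring-Lehto), together with~\cite[Theorem~3.1]{LSS} (cf.~\cite[Theorem~3.1]{Sal}) — is a ring $Q^{\,*}$-map at each point of $B(0,R)$, where $Q^{\,*}=Q$ in ${\Bbb B}^2$ and $Q^{\,*}(y)=Q(y/|y|^2)$ otherwise. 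Since, by the previous step, $\int_0^{\varepsilon_0}\frac{dr}{rq^{\,*}_{z_0}(r)}=+\infty$ at every boundary point $z_0\in{\Bbb S}^1$, I would invoke the planar counterpart of~\cite[Theorem~1]{Sev$_1$} (see also~\cite{RS} and~\cite{MRSY$_1$} for the planar boundary theory) to obtain a continuous extension of $f$ to ${\Bbb S}^1=\partial{\Bbb B}^2$, and then~\cite[Lemma~6]{Sm} to upgrade it to a homeomorphism $f:\overline{{\Bbb B}^2}\rightarrow\overline{{\Bbb B}^2}$, since image and preimage are both ${\Bbb B}^2$. With this homeomorphic extension in hand, all hypotheses of Theorem~\ref{th1} are satisfied, so~(\ref{eq12E}) follows immediately from that theorem, for every $z_0\in\overline{{\Bbb B}^2}$.

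The step I expect to be the main obstacle is the boundary extension. In the bare $W^{1,1}_{\rm loc}$ planar setting one must make sure that $f$ really is a ring $Q^{\,*}$-homeomorphism at the boundary points — this rests entirely on the Gehring-Lehto absolute continuity on almost all circles rather than on any Sobolev exponent exceeding $1$ — and that the divergence of $\int_0^{\varepsilon_0}\frac{dr}{rq^{\,*}_{z_0}(r)}$ extracted in the first step is exactly the modulus-type condition under which ring $Q$-homeomorphisms extend continuously to the boundary; one also has to verify that the integrability $Q\in L^1({\Bbb B}^2)$ of the majorant $Q=K_I(\cdot,f)$ suffices for the inversion argument of items~I--IV to close (here $\int_{B(0,R)}K_I(x,F)\,dm(x)<\infty$ is checked exactly as in~(\ref{eq12}), and the integrability of $\Vert F^{\,\prime}\Vert$ as in~(\ref{eq310})--(\ref{eq12A})).
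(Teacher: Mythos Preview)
Your proposal is essentially correct and follows the paper's own route---the paper says only that the proof ``almost literally repeats the proof of Corollary~\ref{cor1}, and therefore is omitted,'' and you have reproduced exactly that scheme: derive the divergence $\int_0^{\varepsilon_0}\frac{dr}{r q_{z_0}(r)}=+\infty$ from~(\ref{eq11B}) via the monotonicity argument, deduce the boundary extension from~\cite[Theorem~1]{Sev$_1$} and~\cite[Lemma~6]{Sm}, and then invoke Theorem~\ref{th1}.

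One small point of order is worth tightening. You propose to ``reproduce items~I--IV of the proof of Theorem~\ref{th1}''---that is, to perform the inversion extension $F$ and establish the ring $Q^{*}$-property of $F$---\emph{before} you know $f$ extends continuously to ${\Bbb S}^1$. But the inversion formula for $F$ on $|z|\geqslant 1$ needs $f$ defined on ${\Bbb S}^1$, and items~I--IV of Theorem~\ref{th1} are carried out under the standing hypothesis that this extension already exists. The cleaner (and literally Corollary~\ref{cor1}-style) order is: apply~\cite[Theorem~3.1]{LSS} (cf.~\cite[Theorem~3.1]{Sal}) directly to $f$ on ${\Bbb B}^2$ to see that $f$ is a ring $Q$-homeomorphism at every $z_0\in\overline{{\Bbb B}^2}$; combine this with the divergence of $\int_0^{\varepsilon_0}\frac{dr}{r q_{z_0}(r)}$ (which follows from the divergence for $q^{*}_{z_0}$ since $q_{z_0}\leqslant q^{*}_{z_0}$) to get the continuous boundary extension via~\cite[Theorem~1]{Sev$_1$}; upgrade to a homeomorphism of $\overline{{\Bbb B}^2}$ by~\cite[Lemma~6]{Sm}; and only then call Theorem~\ref{th1}, which performs the inversion internally. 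This avoids the circularity and the redundancy of doing the inversion twice.
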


\medskip
The proof of Corollary~\ref{cor2} almost literally repeats the proof
of Corollary~\ref{cor1}, and therefore is omitted.

\medskip
\begin{example}\label{ex5}
We have already constructed an example of a map satisfying the
conditions and the conclusion of Theorem~\ref{th3} for $n=3$. For
comparison, we will also construct a similar example on the plane.
Let $\beta$ be the function defined in~(\ref{eq2E}). Now put
$Q(z)=\beta(|z|).$ Now, by~(\ref{eq12D*}) we obtain that
$q_{0}(r)=q^{\,*}_{0}(r)=\beta(r).$ Set $\varepsilon_0:=1/4$ and
$0<a<1.$ Arguing similarly to~(\ref{eq2A}), we obtain that
$\int\limits_{a}^{\varepsilon_0} \frac{dr}{rq_{0}(r)}\geqslant
\sum\limits_{k=4}^{k_0-1}\int\limits_{\frac{1}{k+1}}^{\frac{1}{k}-2^{-4k-1}}\,\frac{dr}{r}.$
Now, by~(\ref{eq5F}) $\int\limits_{a}^{\varepsilon_0}
\frac{dr}{rq_{0}(r)}\geqslant \ln\frac{k_0}{4}-c.$ Thus, similarly
to~(\ref{eq3U}) and~(\ref{eq3V}),
\begin{equation}\label{eq3A} \lim\limits_{a\rightarrow +0}
\frac{\exp\left\{-\int\limits_{a}^{\varepsilon_0}
\frac{dr}{rq_{0}(r)}\right\}}{a}\leqslant C<\infty
\end{equation}
for some $0<C<\infty.$ Thus, the condition~(\ref{eq11A}) holds for
$\alpha=1$ at $0.$

According to Proposition~6.15 in~\cite{RSY$_1$}, the mapping
$$w=f(z)=\frac{z}{|z|}e^{\int\limits_{1}^{|z|}(\beta(t)/t)\,dt}$$
is a homeomorphism of the Sobolev class $W_{\rm loc}^{1, 1}({\Bbb
C})$ and has a dilatation $K_I(z, f)$ equal to $\beta(|z|).$ Note
that this map takes the disk ${\Bbb B}^2$ onto itself and moreover,
$f|_{{\Bbb S}^1}\equiv z.$ Moreover,
$$\int\limits_{{\Bbb B}^2} K_I(z, f)\,dm(z)=2\pi\int\limits_0^1 r\cdot \beta(r)\,dr=$$
\begin{equation}\label{eq3W}=2\pi\sum\limits_{k=1}^{\infty}
\int\limits_{\frac{1}{k+1}}^{\frac{1}{k}-2^{-4k-1}}\,r\,dr+
2\pi\sum\limits_{k=1}^{\infty}\int\limits_{\frac{1}{k}-2^{-4k-1}}^{\frac{1}{k}}
r\cdot 2^{k-1}\,dr\leqslant
\end{equation}
$$\leqslant \pi+ 2\pi\sum\limits_{k=1}^{\infty}2^{-4k-1}\cdot 2^{k-1}<\infty\,.$$
Reasoning similarly to~(\ref{eq29})--(\ref{eq310}), we may obtain
from~(\ref{eq3W}) that
\begin{equation}\label{eq4B}
\int\limits_{{\Bbb B}^2} \, \Vert
f^{\,\prime}(z)\Vert\,dm(z)<\infty\,.
\end{equation}
By~(\ref{eq4B}), $f\in W^{1, 1}({\Bbb B}^2).$
Thus, the mapping $f$ satisfies all the conditions of
Theorem~\ref{th1}, and the conclusion of this theorem at the point
$z_0=0$ is applicable for this mapping with $\alpha=1$.
\end{example}

\section{On H\"{o}lder  continuity of harmonic  mappings on the
unit ball in $\mathbb{R}^n$}\label{SharHold}

In this section, we provide initial results related to harmonic functions   and plan to publish further results in a future article.

In order to discuss the subject we first need a few basic definitions and results.
\begin{definition}\label{DfHar}
Let  $U$ be   an open subset of  $\mathbb{R}^n$.
A harmonic function (real valued) is a twice continuously differentiable function  $f:U \rightarrow \mathbb{R}$ that satisfies Laplace's equation, that is,
$${\Delta f:= \displaystyle {\frac {\partial ^{2}f}{\partial x_{1}^{2}}}+{\frac {\partial ^{2}f}{\partial x_{2}^{2}}}+\cdots +{\frac {\partial ^{2}f}{\partial x_{n}^{2}}}=0} $$ 
everywhere on $U$. In physics  notations  often we write  ${\displaystyle \nabla ^{2}f}$ instead of  $\Delta f$  and  this is usually written as
${\displaystyle \nabla ^{2}f=0}$.
A function   $f=(f_1,f_2,...,f_m): U\rightarrow \mathbb{R}^m$  is called  vector valued harmonic function  if  $f_i$, $i=1,2,...,m$,  are  real valued  harmonic functions.
\end{definition}
In  two  dimensions  harmonic  functions  form  a  useful,  strictly  larger  class of  functions  including  holomorphic  functions.

For  example,  harmonic  functions  still  enjoy  a mean-value property, as holomorphic functions do: \\
The mean value property:
If $B(x, r)$   is a ball with center $x$ and radius $r$  which is completely contained in the open set $G \subset  \mathbb{R}^n$, then the value $u(x)$ of a harmonic function $u:U \rightarrow \mathbb{R}$ at the center of the ball is given by the average value of u on the surface of the ball; this average value is also equal to the average value of $u$ in the interior of the ball.

To get an orientation what we can expect  concerning H\"{o}lder   continuity  of functions in   Orlicz-Sobolev classes   (and  the place of these classes   with respect to  Sobolev classes)  it seems useful to have in mind the following classical result.

\subsection{Morrey's  theorem}   Let  $G\subset\mathbb{R}^n$  be a
bounded open set with  $C^1$  boundary.  Assume   $n < p <\infty$
and  set  $\alpha= 1- n/p >0$. Then  every  function  $f\in W^{1,p}$
coincides  a.e.  with  a function  $\tilde{f} \in C^{0,\alpha}(G)$.
Moreover, there exists a constant  $C$  such that
$$|\tilde{f}|_{C^{0,\alpha}}\leqslant C |f|_{W^{1,p}} \quad {\rm for\,\, all}  \quad  f\in W^{1,p}(G),$$
where $|f|_{W^{1,p}}$  is  Sobolev norm of $f$   on $G$.
In statement of Morrey's  theorem  it is supposed that   $W^{1,p}$, $p>n$, i.e.,    $(h-1)$ (see below  Proposition \ref{PrSob1}) holds  for particular choice of  $\varphi(t)=t^p$, $p>n$.
Recall  some relations  between Sobolev  and  Orlicz-Sobolev space. If  $p>n$  and   $\liminf_{t\rightarrow +\infty} \varphi(t)/ t^p >c>0 $, $G$ bounded  then  $W^{1,\varphi} \subset W^{1,p}$.
If in addition  $G\subset\mathbb{R}^n$  is  a bounded open set with  $C^1$  boundary  and  $f\in W^{1,\varphi}(G)$, then  $f$  is $\alpha$-H\"{o}lder  on  $G$,  where     $\alpha= 1- n/p >0$.
By this in mind,  it seems natural question to  consider  what is a right  version of  Morrey's  theorem  for  Orlicz-Sobolev space ?

We start  with  the following  proposition  which  determines  the places of  Orlicz-Sobolev classes with respect to  Sobolev classes.

\begin{proposition}\label{PrSob1}{\sl\,
Introduce the hypothesis
{\rm (h1)}:  {\sl\,}
Let $n\geqslant 3,$  $\alpha\in (0, 1],$ and
let $\varphi: (0, \infty)\rightarrow [0, \infty)$ be a
non-decreasing Lebesgue measurable function

{\rm (h2)}:

%
\begin{equation}\label{eq7}
\int\limits_{1}^{\infty}\left(\frac{t}{\varphi(t)}\right)^
{\frac{1}{n-2}}dt<\infty.
\end{equation}

Then:
\begin{itemize}
\item[{\rm (S1)}] $W^{1,p}(D)$, $p> n-1$,    is   in Orlicz-Sobolev  class for  $\varphi(t)=t^p$.

\item [{\rm(S2)}]  Suppose that $\varphi$  satisfies  (h1).  If  $p>n$  and   $\liminf_{t\rightarrow +\infty} \varphi(t)/ t^p >c>0 $, $G$ bounded  then  $W^{1,\varphi} \subset W^{1,p}$.
If in addition  $G\subset\mathbb{R}^n$  is  a bounded open set with  $C^1$  boundary  and  $f\in W^{1,\varphi}(G)$, then  $f$  is $\alpha$-Holder  on  $G$,  where     $\alpha= 1- n/p >0$.

\item [{\rm(S3)}] Suppose that $\varphi$  satisfies  (h1) and (h2)   and  $D$ is a bounded domain in
$\mathbb{R}^n$, $n\geqslant 3 $,   and that  $f\in
W^{1,\varphi}(D).$  Then $W^{1,\varphi}(D)\subset W^{1,n-1}(D)$.
\end{itemize}}
\end{proposition}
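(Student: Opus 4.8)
The plan is to treat the three parts separately, since they are of very different depth.

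For \textbf{(S1)}: with $\varphi(t)=t^{p}$ and $p>n-1\geq 2$ the function $\varphi$ is continuous and non-decreasing, so (h1) holds, while $\int_{1}^{\infty}(t/\varphi(t))^{1/(n-2)}\,dt=\int_{1}^{\infty}t^{(1-p)/(n-2)}\,dt$ converges exactly because $p>n-1$, so (h2) holds; and if $f\in W^{1,p}(D)$ then $f\in W^{1,1}_{\rm loc}(D)$ by H\"older's inequality and $\int_{D^{*}}\varphi(|\nabla f|)\,dm=\int_{D^{*}}|\nabla f|^{p}\,dm<\infty$ for every $D^{*}$ with compact closure in $D$, i.e.\ $f\in W^{1,\varphi}_{\rm loc}(D)$. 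For \textbf{(S2)}: from $\liminf_{t\to\infty}\varphi(t)/t^{p}>c>0$ choose $T$ with $\varphi(t)\geq c\,t^{p}$ for $t\geq T$; then for $f\in W^{1,\varphi}(G)$,
$$\int_{G}|\nabla f|^{p}\,dm\leq T^{p}|G|+\frac{1}{c}\int_{G}\varphi(|\nabla f|)\,dm<\infty,$$
so $|\nabla f|\in L^{p}(G)$, which together with $f\in W^{1,1}(G)$ (contained in the definition of $W^{1,\varphi}(G)$) gives $f\in W^{1,p}(G)$; the H\"older claim is then Morrey's theorem as recalled above.

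The substance of the proposition is \textbf{(S3)}. Since $f\in W^{1,1}(D)$ is assumed, it is enough to prove $h:=|\nabla f|\in L^{n-1}(D)$; write $M:=\int_{D}\varphi(h)\,dm<\infty$. First, (h2) forces $\varphi(t)>0$ for every $t>1$ (otherwise, $\varphi$ being non-decreasing, the integrand of (h2) would be $+\infty$ on a set of positive measure), so $\varphi(2^{j})>0$ for all $j\geq1$. By Chebyshev's inequality and monotonicity of $\varphi$,
$$\varphi(2^{j})\,\bigl|\{2^{j}<h\leq 2^{j+1}\}\bigr|\leq\int_{\{h>2^{j}\}}\varphi(h)\,dm\leq M\qquad(j\geq1),$$
so, decomposing $D=\{h\leq 2\}\cup\bigcup_{j\geq1}\{2^{j}<h\leq 2^{j+1}\}$ and using that $D$ is bounded,
$$\int_{D}h^{n-1}\,dm\leq 2^{n-1}|D|+2^{n-1}\sum_{j\geq1}2^{j(n-1)}\bigl|\{2^{j}<h\leq 2^{j+1}\}\bigr|\leq 2^{n-1}|D|+2^{n-1}M\sum_{j\geq1}b_{j},$$
where $b_{j}:=2^{j(n-1)}/\varphi(2^{j})$. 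Everything thus reduces to showing $\sum_{j\geq1}b_{j}<\infty$.

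This is where Calderón's condition enters, and where I expect the only real difficulty. Using that $\varphi$ is non-decreasing one has $t/\varphi(t)\geq 2^{j}/\varphi(2^{j+1})$ for $t\in[2^{j},2^{j+1}]$, hence
$$\int_{1}^{\infty}\Bigl(\frac{t}{\varphi(t)}\Bigr)^{1/(n-2)}dt\geq\sum_{j\geq0}2^{j}\Bigl(\frac{2^{j}}{\varphi(2^{j+1})}\Bigr)^{1/(n-2)}=2^{-(n-1)/(n-2)}\sum_{j\geq1}b_{j}^{1/(n-2)},$$
so (h2) implies $\sum_{j\geq1}b_{j}^{1/(n-2)}<\infty$. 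Since $n\geq3$, the exponent $r:=1/(n-2)$ satisfies $0<r\leq1$, and for non-negative sequences $\ell^{r}\subset\ell^{1}$ (concretely $b_{j}^{r}\to0$, so $b_{j}\leq b_{j}^{r}$ for all large $j$); therefore $\sum_{j\geq1}b_{j}<\infty$. Combined with the displays above this gives $h\in L^{n-1}(D)$, i.e.\ $f\in W^{1,n-1}(D)$. The one step that is not mere bookkeeping is precisely this last comparison $\sum b_{j}^{r}<\infty\Rightarrow\sum b_{j}<\infty$: it rests on the inclusion $\ell^{r}\subset\ell^{1}$ for $r\leq1$, a genuinely discrete fact with no integral analogue, which is exactly why packaging (h2) through a dyadic decomposition of $\{h>2\}$ — rather than a crude layer-cake estimate — is the right move.
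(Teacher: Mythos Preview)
Your proof is correct. Parts (S1) and (S2) match the paper's treatment (the paper dismisses them with ``(i) is readable'' and ``(ii) follows from Morrey's theorem'').

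For (S3) you take a genuinely different route from the paper. The paper's argument is shorter and more direct: setting $A(t)=(t/\varphi(t))^{1/(n-2)}$, it observes that convergence of $\int_1^\infty A(u)\,du$ forces $\int_t^{2t}A(u)\,du\to 0$, and since $\varphi$ is non-decreasing one has $A(u)\geq (t/\varphi(2t))^{1/(n-2)}$ on $[t,2t]$, hence $t\cdot(t/\varphi(2t))^{1/(n-2)}\to 0$, i.e.\ $\varphi(t)/t^{n-1}\to\infty$. Thus there is $t_0>1$ with $\varphi(t)\geq t^{n-1}$ for $t\geq t_0$, and then $|\nabla f|^{n-1}\leq\varphi(|\nabla f|)$ on $\{|\nabla f|\geq t_0\}$ gives $|\nabla f|\in L^{n-1}(D)$ immediately.

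Your dyadic level-set decomposition plus the $\ell^{r}\subset\ell^{1}$ trick works, but it is doing more than necessary: the fact that $\sum_j b_j^{1/(n-2)}<\infty$ already implies $b_j=2^{j(n-1)}/\varphi(2^j)\to 0$, which is essentially the paper's pointwise statement $\varphi(t)\gg t^{n-1}$, and from there one can finish in one line without summing the $b_j$ at all. So your approach is a valid but somewhat roundabout packaging of the same underlying growth estimate.
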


\medskip
\begin{proof} (i) is readable. Note that  (ii) follows from Morrey's  theorem. Let us prove (iii).

Set $ A(t)=\left(\frac{t}{\varphi(t)}\right)^ {\frac{1}{n-2}}$.
Since integral  $\int\limits_{1}^{\infty}A(u) du <\infty$, then
$\int\limits_{t}^{2t}A(u) du=o(1)$, when  $t\rightarrow \infty$.
Further  $A(t)\leqslant A(u)$  for $t\leqslant u$  and therefore  $t
A(t) \rightarrow 0$ and by elementary consideration  $\varphi(t)=
M(t) t^{n-1}$, where $M(t)\rightarrow \infty $  when   $t\rightarrow
\infty$. Hence  there is $t_0>1$   such that   $\varphi(t)\geqslant
t^{n-1}$ for $t\geqslant t_0$. Next we conclude $|\nabla f(x)|
^{n-1}\leqslant \varphi\left(|\nabla f(x)|\right) $  if $|\nabla
f(x)|\geqslant t_0$. Hence it is readable   $W^{1,\varphi}(D)\subset
W^{1,n-1}(D)$.~$\Box$
\end{proof}


\medskip
Our approach in this section  is based on the following
results   which  we call local spatial  version  of  Privalov
theorem   for harmonic functions   and  which has an independent
interest.

\medskip
\begin{theorem}\label{thmloc1}{\sl\,
Suppose  that $0 < \alpha<1$, $\,h\,$ is a Euclidean
harmonic mapping from    $\mathbb{B}^n$, and  continuous on  $\overline{{\Bbb B}^{n}}$.\\
Let $x_0\in \mathbb{S}^{n-1}$  and   $ |h(x)- h(x_0)| \leqslant M
|x-x_0|^\alpha$ for $x\in \mathbb{S}^{n-1}$. Then there is a
constant   $M_n$   such that $(1-r)^{1-\alpha} |h'(r x_0)|\leqslant
M_n$, $0\leqslant  r <1$.}
\end{theorem}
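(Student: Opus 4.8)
The plan is to reduce everything to the Poisson representation of $h$ and to a one–dimensional integral estimate. Since $h$ is harmonic in $\mathbb{B}^n$ and continuous on $\overline{\mathbb{B}^n}$, it is the Poisson integral of its boundary values:
$$h(x)=\int\limits_{\mathbb{S}^{n-1}}P(x,\zeta)\,h(\zeta)\,d\sigma(\zeta),\qquad P(x,\zeta)=\frac{1-|x|^2}{\omega_{n-1}\,|x-\zeta|^{\,n}},$$
where $d\sigma$ is surface measure on $\mathbb{S}^{n-1}$ and $\int_{\mathbb{S}^{n-1}}P(x,\zeta)\,d\sigma(\zeta)=1$. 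Differentiating this last identity in $x$ (legitimate, since on compact subsets of $\mathbb{B}^n$ the integrand is smooth in $x$ with a bound independent of $\zeta$) gives $\int_{\mathbb{S}^{n-1}}\nabla_x P(x,\zeta)\,d\sigma(\zeta)=0$, so differentiating the Poisson integral and subtracting $h(x_0)$ times this vanishing integral yields, for $x=rx_0$ with $0\le r<1$,
$$\nabla h(rx_0)=\int\limits_{\mathbb{S}^{n-1}}\nabla_x P(rx_0,\zeta)\,\bigl(h(\zeta)-h(x_0)\bigr)\,d\sigma(\zeta).$$

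Next I would record the kernel gradient bound. A direct computation of $\nabla_x P$ together with the elementary inequality $1-|x|\le|x-\zeta|$ valid for all $\zeta\in\mathbb{S}^{n-1}$ gives $|\nabla_x P(x,\zeta)|\le C_n|x-\zeta|^{-n}$. For $x=rx_0$ put $d:=1-r=|x-x_0|$; since $|x-\zeta|\ge 1-r=d$ and $|x-\zeta|\ge|\zeta-x_0|-d$ for every $\zeta\in\mathbb{S}^{n-1}$, a short case distinction gives $|x-\zeta|\ge\tfrac14\bigl(d+|\zeta-x_0|\bigr)$. Combining this with the hypothesis $|h(\zeta)-h(x_0)|\le M|\zeta-x_0|^{\alpha}$ leads to
$$|\nabla h(rx_0)|\le C_n 4^n M\int\limits_{\mathbb{S}^{n-1}}\frac{|\zeta-x_0|^{\alpha}}{\bigl(d+|\zeta-x_0|\bigr)^{n}}\,d\sigma(\zeta).$$

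Finally I would evaluate the surface integral by ``polar coordinates'' on $\mathbb{S}^{n-1}$ about $x_0$: the slices $\{\zeta:|\zeta-x_0|=t\}$ carry measure at most $c_n t^{n-2}\,dt$, so the integral is dominated by $c_n\int_0^2 t^{\,n-2+\alpha}(d+t)^{-n}\,dt$. Splitting at $t=d$, on $(0,d)$ one bounds $(d+t)^{-n}$ by $d^{-n}$, and on $(d,2)$ one bounds it by $t^{-n}$; here $\alpha<1$ is exactly what makes $\int_d^2 t^{\alpha-2}\,dt$ converge, and both pieces are at most $C(n,\alpha)\,d^{\alpha-1}$. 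Hence $(1-r)^{1-\alpha}|h'(rx_0)|\le M_n$ with $M_n$ of the form $C(n,\alpha)\,M$ (for vector-valued $h$ apply the argument componentwise, using $|h_i(\zeta)-h_i(x_0)|\le|h(\zeta)-h(x_0)|$). I expect no conceptual obstacle once the Poisson representation is in place; the only points needing care are justifying differentiation under the integral sign, the gradient bound for $P$, and the geometric lower bound $|x-\zeta|\gtrsim d+|\zeta-x_0|$ — after which the splitting estimate is routine.
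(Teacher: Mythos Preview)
Your proposal is correct and follows essentially the same route as the paper: Poisson representation, subtract the constant $h(x_0)$ using $\int_{\mathbb{S}^{n-1}}\nabla_x P(x,\zeta)\,d\sigma(\zeta)=0$, bound $|\nabla_x P|\le C_n|x-\zeta|^{-n}$, and reduce to a one–dimensional integral controlled by $(1-r)^{\alpha-1}$. The paper carries out the last step slightly differently---it parametrizes $\mathbb{S}^{n-1}$ by the angle $\theta$ from $x_0$, uses the explicit spherical-cap area formula $A'(\theta)=\omega_{n-1}\sin^{n-2}\theta$ together with $|x-\zeta|^2=(1-r)^2+4r\sin^2(\theta/2)$, and then scales $\theta=(1-r)u$---whereas you use the chordal distance $t=|\zeta-x_0|$, the triangle-inequality comparison $|x-\zeta|\gtrsim d+t$, and a split at $t=d$; these are interchangeable executions of the same estimate.
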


For some  global results  of this type see for example \cite{AKM$_2$}  and literature cite there.

As we mention,  recall that   we can get  some versions of the previous theorems in Sections 1-4 for harmonic maps which are  immediate corollary  of Theorem  \ref{thmloc1}:

\medskip
\begin{corollary}\label{thhar}{\sl\, If $f\in W^{1, \varphi}({\Bbb B}^n)$ satisfies  the  condition of
Theorem~\ref{th4} and it   is in addition harmonic in the sense of  Definition \ref{DfHar}, then $f$ is
$\alpha$-H\"{o}llder on ${\Bbb B}^n$.}
\end{corollary}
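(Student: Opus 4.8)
The plan is to combine Theorem~\ref{th4} with the local spatial Privalov theorem (Theorem~\ref{thmloc1}), applied at every boundary point of $\mathbb{B}^n$. First I would invoke Theorem~\ref{th4}: under the stated hypotheses, $f$ extends to a homeomorphism $\overline{\mathbb{B}^n}\to\overline{\mathbb{B}^n}$ which is $\alpha$-H\"older on the sphere $\mathbb{S}^{n-1}$, with the explicit exponent $\alpha=\left(\frac{\omega_{n-1}\log 2}{\Omega_n(4^n+1)2^{n+1}C}\right)^{1/(n-1)}$. So there is a constant $M$ with $|f(x_2)-f(x_1)|\leqslant M|x_2-x_1|^{\alpha}$ for all $x_1,x_2\in\mathbb{S}^{n-1}$ close enough; a standard covering/compactness argument upgrades this to a global H\"older bound on the whole sphere (possibly enlarging $M$). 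Note $\alpha<1$ for the constants involved, so Theorem~\ref{thmloc1} applies; if $\alpha\geqslant 1$ one argues with a slightly smaller exponent, which still gives the claim.

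Next, since each coordinate function $f_j$ is harmonic on $\mathbb{B}^n$ and continuous on $\overline{\mathbb{B}^n}$, and since the boundary data satisfy the H\"older estimate $|f(x)-f(x_0)|\leqslant M|x-x_0|^{\alpha}$ for $x,x_0\in\mathbb{S}^{n-1}$, Theorem~\ref{thmloc1} yields, for every $x_0\in\mathbb{S}^{n-1}$, the gradient bound
$$
(1-r)^{1-\alpha}\,|f'(rx_0)|\leqslant M_n,\qquad 0\leqslant r<1,
$$
with $M_n$ independent of $x_0$. Equivalently, $|\nabla f(x)|\leqslant M_n\,\mathrm{dist}(x,\partial\mathbb{B}^n)^{\alpha-1}$ for all $x\in\mathbb{B}^n$. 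This is exactly the Hardy--Littlewood type gradient growth condition that characterizes $\alpha$-H\"older continuity on convex domains.

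Finally I would run the standard Hardy--Littlewood integration argument: for $x,y\in\mathbb{B}^n$, estimate $|f(x)-f(y)|$ by integrating $|\nabla f|$ along the segment $[x,y]$ (the ball is convex), splitting into the case where $|x-y|$ is comparable to the distances to the boundary and the case where it is not, and using $\int_0^{|x-y|} t^{\alpha-1}\,dt = |x-y|^{\alpha}/\alpha$. This produces $|f(x)-f(y)|\leqslant C_n\,|x-y|^{\alpha}$ for all $x,y\in\mathbb{B}^n$, i.e. $f$ is $\alpha$-H\"older on $\mathbb{B}^n$, and by continuity on $\overline{\mathbb{B}^n}$.

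The main obstacle I anticipate is not any single step but the bookkeeping at the junction: Theorem~\ref{thmloc1} is stated pointwise in $x_0$ and along radial segments $rx_0$, so one must check that the constant $M_n$ there genuinely depends only on $n$, $\alpha$ and the boundary H\"older constant $M$ (and not on $x_0$), so that the gradient estimate is uniform over $\mathbb{B}^n$; and one must confirm that the boundary H\"older estimate from Theorem~\ref{th4}, which is stated only for $|x_2-x_1|<\delta_0$, indeed globalizes on the compact sphere. Both are routine but are the places where care is needed. The passage from the uniform gradient growth to global H\"older continuity is the classical Hardy--Littlewood lemma and requires no new idea.
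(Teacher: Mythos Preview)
Your proposal is correct and follows exactly the approach the paper intends: the paper does not spell out a proof, stating only that this is an ``immediate corollary of Theorem~\ref{thmloc1}'' (the local spatial Privalov theorem). Your three steps --- use Theorem~\ref{th4} to get $\alpha$-H\"older regularity of the boundary extension on $\mathbb{S}^{n-1}$, feed this into Theorem~\ref{thmloc1} to obtain the uniform radial gradient bound $(1-r)^{1-\alpha}|f'(rx_0)|\leqslant M_n$, and then invoke the classical Hardy--Littlewood integration argument on the convex ball --- are precisely what is implicit in the paper's one-line justification. Your bookkeeping remarks (uniformity of $M_n$ in $x_0$, globalizing the local boundary H\"older bound by compactness, and the caveat $\alpha<1$ needed for Theorem~\ref{thmloc1}) are well placed; in particular the proof of Theorem~\ref{thmloc0} reduces to the case $x_0=e_n$ by rotation, so the constant is indeed independent of the boundary point.
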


\medskip
\begin{theorem}{\sl\, The following statements are true:
\begin{itemize}
\item[{\rm (i)}]   If under the  condition in statement
of   Theorem~\ref{th3} (with respect  to  point $x_0$) in addition $f$ is harmonic, then there is a constant   $M_n$   such that

(I-1)   $(1-r)^{1-\alpha} |f'(r x_0)|\leqslant M_n$, $0\leqslant  r
<1$.
\item[{\rm (ii)}] If under conditions
of  Theorem  \ref{th1} (with respect to point $z_0$)  in addition $f$ is harmonic, then (I-1) holds for $n=2$ with $z_0$ instead of $x_0$.

\item[{\rm (iii)}]  If under  the  conditions
of  Corollary  \ref{cor1} in addition $f$ is harmonic,  then $f$ is
$\alpha$-H\"{o}lder on  $\overline{{\Bbb B}^{n}}$.


\item[{\rm (iv)}]  Under  the  conditions
of  Corollary  \ref{cor2},  in addition $f$ is harmonic, then    $f$
is  $\alpha$-H\"{o}lder on  $\overline{{\Bbb B}^{2}}$.
\end{itemize}
}
\end{theorem}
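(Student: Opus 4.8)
The plan is to derive all four assertions from the local Privalov‑type estimate of Theorem~\ref{thmloc1} by feeding into it the boundary H\"older bounds already established in Sections~2--4. Note first that in the situations of Theorems~\ref{th3} and~\ref{th1} the map $f$ is assumed to extend to a homeomorphism of $\overline{{\Bbb B}^n}$ onto itself, hence $f$ is continuous on $\overline{{\Bbb B}^n}$, and by the extra hypothesis it is harmonic in ${\Bbb B}^n$; so Theorem~\ref{thmloc1} applies to $h=f$.

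First I would prove (i). Theorem~\ref{th3} gives $C>0$ and $0<\widetilde{\varepsilon_0}<\varepsilon_0$ with $|f(x)-f(x_0)|\leqslant C|x-x_0|^{\alpha}$ for $x\in B(x_0,\widetilde{\varepsilon_0})\cap\overline{{\Bbb B}^n}$, in particular for $x\in{\Bbb S}^{n-1}$ with $|x-x_0|<\widetilde{\varepsilon_0}$. Since $f$ maps ${\Bbb B}^n$ onto ${\Bbb B}^n$, for $x\in{\Bbb S}^{n-1}$ with $|x-x_0|\geqslant\widetilde{\varepsilon_0}$ one has $|f(x)-f(x_0)|\leqslant 2\leqslant 2\widetilde{\varepsilon_0}^{\,-\alpha}|x-x_0|^{\alpha}$, so altogether $|f(x)-f(x_0)|\leqslant M|x-x_0|^{\alpha}$ on all of ${\Bbb S}^{n-1}$ with $M=\max\{C,2\widetilde{\varepsilon_0}^{\,-\alpha}\}$. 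Applying Theorem~\ref{thmloc1} with this $M$ and the given $x_0$ yields a constant $M_n$ (depending only on $n,\alpha,M$) with $(1-r)^{1-\alpha}|f'(rx_0)|\leqslant M_n$ for $0\leqslant r<1$, which is (I-1). Statement (ii) is the same argument with $n=2$, using Theorem~\ref{th1} in place of Theorem~\ref{th3}.

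Next I would treat (iii). Under the hypotheses of Corollary~\ref{cor1} the conclusion of Theorem~\ref{th3}, hence of part (i), holds at \emph{every} $x_0\in{\Bbb S}^{n-1}$; since every $x\in{\Bbb B}^n\setminus\{0\}$ has the form $rx_0$ with $x_0=x/|x|$, this already gives $|f'(x)|\leqslant M_n(x/|x|)(1-|x|)^{\alpha-1}$. The plan is then: (a) make this bound uniform, i.e. show $M:=\sup_{x_0\in{\Bbb S}^{n-1}}M_n(x_0)<\infty$, equivalently that the H\"older data $(C_{x_0},\widetilde{\varepsilon_0}(x_0))$ furnished by Corollary~\ref{cor1} can be chosen with $\sup C_{x_0}<\infty$ and $\inf\widetilde{\varepsilon_0}(x_0)>0$, so that $|f'(x)|\leqslant M(1-|x|)^{\alpha-1}$ throughout ${\Bbb B}^n$; and (b) invoke the classical Hardy--Littlewood lemma: a function continuous on $\overline{{\Bbb B}^n}$ and $C^1$ in ${\Bbb B}^n$ with $|f'(x)|\leqslant M(1-|x|)^{\alpha-1}$ is $\alpha$-H\"older on $\overline{{\Bbb B}^n}$ (integrate $|f'|$ along a radial segment followed by an arc on a sphere $\{|x|=r\}$, then optimise over $r$ according to whether $|x-y|$ is smaller or larger than $1-|x|$). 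Statement (iv) follows likewise from Corollary~\ref{cor2} and part (ii), with $n=2$.

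The main obstacle is step (a): passing from the pointwise (per‑boundary‑point) H\"older estimates of Corollary~\ref{cor1}/\ref{cor2} to one that is uniform on ${\Bbb S}^{n-1}$. Being locally H\"older from centres at every point does not by itself force uniform H\"older continuity on a compact set, so one must use more structure: I would either trace the proof of Corollary~\ref{cor1} (where $C_{x_0}$ comes from the bound $M_0$ in~(\ref{eq13B}) and $\widetilde{\varepsilon_0}(x_0)$ from the same inequality) and exploit compactness of ${\Bbb S}^{n-1}$ together with $Q\in L^1({\Bbb B}^n)$ to control these quantities uniformly, or use that $f|_{{\Bbb S}^{n-1}}$ is a homeomorphism of the sphere to upgrade the per‑point estimates via a finite subcover before applying Theorem~\ref{thmloc1}. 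Everything else — the elementary extension of the boundary bound to the whole sphere, and the Hardy--Littlewood integration — is routine.
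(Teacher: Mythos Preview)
Your approach is correct and is exactly what the paper intends: the paper states this theorem as an ``immediate corollary of Theorem~\ref{thmloc1}'' and gives no further argument, so your derivation---feed the boundary H\"older bound from Theorems~\ref{th3}/\ref{th1} (resp.\ Corollaries~\ref{cor1}/\ref{cor2}) into Theorem~\ref{thmloc1}, then for (iii)--(iv) integrate the resulting gradient estimate via Hardy--Littlewood---is precisely the intended route, spelled out in more detail than the paper itself provides.

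Your flagged obstacle in step~(a), the uniformity of $M_n(x_0)$ over ${\Bbb S}^{n-1}$, is a genuine point that the paper simply does not address; it is implicit in calling the result ``immediate'', and your suggested remedies (trace the constants in~(\ref{eq13B}) or use compactness of the sphere) are the natural ways to close it. One small caveat worth noting: Theorem~\ref{thmloc1} is stated only for $0<\alpha<1$ (the paper remarks that the proof breaks down at $\alpha=1$), so strictly speaking parts (i)--(iv) inherit that restriction even though the cited Theorems~\ref{th3}, \ref{th1} and their corollaries allow $\alpha\in(0,1]$.
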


\medskip
In addition  we prove   Propositions \ref{Pk1} and  \ref{Pk2}.

\subsection{Propositions \ref{Pk1} and  \ref{Pk2}}

Set  $Q_\varepsilon(x_0)$  the mean value of $Q$ over ball  $B(x_0,\varepsilon)$  and  $Q^+(x_0)$ supremum  of  $Q_\varepsilon(x_0)$ over  $0 < \varepsilon<\varepsilon_0$.

Kalaj and the first  author study mappings in plane and space which satisfy  the Poisson differential inequality:

(h3)  $|\triangle u|\leqslant a|\nabla u|^2+b$.

We start with  the planar case which  has  some very   specific
properties with respect to spatial case. Note  that   the subject of
harmonic quasiconformal (shortly hqc) mappings has been intensively
studied by the participants of the Belgrade Analysis Seminar (see
for example~\cite{BM} and  \cite{M$_3$} for more details)
\footnote{M. Pavlovi\'c, V. Markovi\'c, D. Kalaj, V. Bo\v zin,
M.~Arsenovi\'c, M. Markovi\'c, N. Laki\'c, D.  \v Sari\'c, M.~Kne\v
zevi\'c, V. Todor\v cevi\'c, M. Laudanovi\'c, M. Svetlik, I.~Ani\'c,
etc,... .} in particular by Kalaj, who  proved that if  $h$ is a hqc
mapping of the unit disk onto  a  Lyapunov domain, then $h$ is
Lipschitz (see~\cite{Ka$_1$}). Recently in~\cite{BM} it is proved
$h$ is co-Lipschitz.


Hence

\medskip
{\bf Proposition  A}.
{\it  Suppose $h:\mathbb{U}\rightarrow D$ is a hqc homeomorphism, where
$D$ is a  Lyapunov domain with $C^{1,\mu}$ boundary. Then $h$ is bi-Lipschitz {\rm (}shortly bi-Lip{\rm)}.}

\medskip
Here  we  prove for example the following:

\medskip
\begin{proposition}\label{Pk1}{\sl\,
Let $D$ be a Lyapunov  planar  domain  and  $f= g+
 \overline{h}$
injective  harmonic presenting orientation of $\mathbb{D}$ onto $D$
{\rm(}or more generally $C^2$ homeomorphism which satisfiies {\rm(h3))}    and
either {\rm(i):}  (\ref{eq1AE})   holds  a.e.  on $\mathbb{S}$  with
$Q=K_\mu$  and  constant $C=K_1$  or  {\rm(ii):}  $q_z(r)\leqslant c_1$
for  almost all $z\in \mathbb{D}$.
Then $f$ is bi-Lip.}
 \end{proposition}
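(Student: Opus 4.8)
The plan is to show that in each of the cases (i) and (ii) the distortion $K_I(\cdot,f)=K_\mu$ is in fact \emph{bounded} on $\mathbb{D}$, so that $f$ is a harmonic quasiconformal homeomorphism of $\mathbb{D}$ onto the Lyapunov domain $D$, and then to invoke Proposition~A (i.e.\ Kalaj's Lipschitz theorem~\cite{Ka$_1$} together with the co-Lipschitz bound of~\cite{BM}). The basic structural fact used is that for a sense-preserving harmonic homeomorphism $f=g+\overline{h}$ of $\mathbb{D}$ the function $K_\mu$ is subharmonic and continuous on $\mathbb{D}$: writing $f_z=g'$, $f_{\bar z}=\overline{h'}$, the Jacobian $|g'|^2-|h'|^2$ is positive on $\mathbb{D}$ (Lewy), so $g'$ is zero-free, the analytic dilatation $a:=h'/g'$ satisfies $|a|<1$, and $K_\mu=\frac{1+|a|}{1-|a|}=\frac{2}{1-|a|}-1$; since $|a|=e^{\log|a|}$ is subharmonic (being the modulus of an analytic function) and $t\mapsto\frac{2}{1-t}-1$ is convex and increasing on $[0,1)$, the composition $K_\mu$ is subharmonic, and it is continuous because $|a|<1$ throughout $\mathbb{D}$.

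Next I would turn each hypothesis into a pointwise bound for $K_\mu$ via the sub-mean-value inequality for subharmonic functions. Under (ii): for $z\in\mathbb{D}$ and $0<r<1-|z|$ the circle $\{x:|x-z|=r\}$ lies in $\mathbb{D}$, and since $q_z(r)$ from~(\ref{eq12D*}) is precisely the circular mean of $K_\mu$ over that circle, subharmonicity gives $K_\mu(z)\leqslant q_z(r)\leqslant c_1$; hence $K_\mu\leqslant c_1$ on $\mathbb{D}$. Under (i): for $z\in\mathbb{D}$ with $d:=1-|z|<\varepsilon_0/2$ put $\zeta:=z/|z|\in\mathbb{S}$; then $B(z,d)\subset B(\zeta,2d)\cap{\Bbb B}^2$, and the solid sub-mean-value inequality together with~(\ref{eq1AE}) gives
$$
K_\mu(z)\ \leqslant\ \frac{1}{\pi d^2}\int\limits_{B(z,d)}K_\mu\,dm\ \leqslant\ \frac{4}{\pi(2d)^2}\int\limits_{B(\zeta,2d)\cap{\Bbb B}^2}K_\mu\,dm\ \leqslant\ 4K_1 ,
$$
while on the compact set $\{|z|\leqslant 1-\varepsilon_0/2\}$ the continuous function $K_\mu$ is bounded; so $K_\mu\in L^\infty(\mathbb{D})$ in this case as well. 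In either case $f$ is quasiconformal, and Proposition~A shows that $f$ is bi-Lipschitz.

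For the more general hypothesis that $f$ is only a $C^2$ homeomorphism of $\mathbb{D}$ onto $D$ satisfying the Poisson differential inequality (h3), the subharmonicity of $K_\mu$ is no longer available and I would argue instead as follows. Since $f\in W^{1,1}_{\rm loc}$ and $K_I(\cdot,f)\in L^1(\mathbb{D})$ — which (i), resp.\ (ii), guarantee — the planar distortion estimates (Theorem~\ref{th1}, Corollary~\ref{cor2}, and the planar analogue of Theorem~\ref{th4}, all of which use only the Sobolev regularity and the $L^1$ dilatation bound) show that $f$ extends to a homeomorphism of $\overline{\mathbb{D}}$ that is $\alpha$-H\"older on $\overline{\mathbb{D}}$ for some $\alpha\in(0,1]$; the Poisson-inequality form of Privalov's lemma (Theorem~\ref{thmloc1}, applied to the components of $f$) then gives $|\nabla f(z)|\leqslant M\,(1-|z|)^{\alpha-1}$ on $\mathbb{D}$; a finite iteration of the boundary barrier argument of~\cite{Ka$_1$},~\cite{Ka$_2$},~\cite{APE}, which uses the $C^{1,\mu}$-flatness of $\partial D$ to raise the exponent, upgrades this to $|\nabla f|\leqslant L$ on $\mathbb{D}$, i.e.\ $f$ is Lipschitz; and a Hopf-lemma estimate for the sign-definite coordinate of $f(z)-f(\zeta)$ transverse to $\partial D$ at $\zeta\in\mathbb{S}$ gives $\mathrm{dist}(f(z),\partial D)\geqslant c\,(1-|z|)$ near $\mathbb{S}$, which together with positivity of the Jacobian of $f$ on compacta yields that $f^{-1}$ is Lipschitz. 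The main obstacle is precisely this last, essentially PDE, part — converting the boundary-decaying gradient estimate into a two-sided uniform bound using only (h3) and the Lyapunov structure of $\partial D$; everything preceding it is bookkeeping with results already in the paper or with the cited theorems.~$\Box$
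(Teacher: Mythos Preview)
Your argument for the harmonic case is correct but follows a different route from the paper. The paper argues via Hardy--space theory: since $a=h'/g'$ is a bounded analytic function on $\mathbb{D}$, its radial (angular) limits $a^*$ exist a.e.\ on $\mathbb{S}$; at a boundary point $z_0$ where the angular limit exists and the mean condition holds, the half--disk average of $K_\mu$ tends to $K_\mu^*(z_0)/2$, whence $K_\mu^*\leqslant 2K_1$ (respectively $\leqslant 2c_1$) a.e.; the maximum principle for $H^\infty$ then forces $|a|\leqslant k$ on all of $\mathbb{D}$, so $f$ is $K$-qc and Proposition~A applies. Your route---$K_\mu=\phi(|a|)$ with $\phi$ convex increasing, hence $K_\mu$ subharmonic, hence the sub--mean--value inequality converts the averaged bounds directly into a pointwise bound---is more elementary and avoids boundary--value theory altogether. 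One small point: since (i) is assumed only for a.e.\ $\zeta\in\mathbb{S}$, you should note that the set of admissible $\zeta$ is dense, so for each $z$ near $\mathbb{S}$ you may pick $\zeta$ with $|\zeta-z/|z||<1-|z|$ and absorb this into the constant (getting, say, $9K_1$ instead of $4K_1$).

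For the parenthetical $(h3)$ case your sketch has a genuine gap: Theorem~\ref{thmloc1} is stated and proved only for \emph{harmonic} $h$, so you cannot invoke it for a general $C^2$ map satisfying the Poisson differential inequality. To be fair, the paper's own proof treats only the harmonic case as well---the argument via $h'/g'\in H^\infty$ uses the analytic structure essentially and does not transfer to $(h3)$---so the parenthetical extension is not established there either; it presumably rests on the later Theorem~K/Theorem~MM once quasiconformality is known, but the step ``mean bound on $K_\mu$ $\Rightarrow$ pointwise bound'' is precisely what fails without subharmonicity.
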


Our proof is based  on  the  Hardy spaces  theory.

\medskip
\begin{proof} Since  $f$  injective  harmonic presenting orientation of $\mathbb{D}$ onto
$D$,  $|\mu(z)|< 1$  for  $z\in \mathbb{D}$, where     $\mu=\mu_f$.
Let  $E$  be the set of points $z_0$ on $\mathbb{S}$ for which the
finite  radial limit  $\mu^* $ exists.   From the  Hardy spaces
theory it is known   then the radial limit  $\mu^* $ exist a.e.  on
$\mathbb{S}$.

Suppose that (i) holds and  denote by $E_0\subset \mathbb{S}$ the set on which (i) holds.
Note that  the set $\mathbb{S} \setminus (E \cap E_0)$  is of measure $0$.

Let  $z_0\in E\cap E_0$   and $D(z_0,r)$  is the intersection of disk $B(z_0,r)$ with  $\mathbb{D}$.
Let $Q(r,z_0)$ be the  mean vale of $Q$ over $B(z_0,r)$.
Since the angular   limit $K_\mu^*(z_0)$  of $K_\mu$ also exists at $z_0$
  we conclude  that $Q(r,z_0)$ tends to  $K_\mu^*(z_0)/2$  from   (1.7)  that  $K_\mu^*(z_0)/2\leqslant K_1$.

Hence if we set  $K=2K_1$ and  $k= \frac{K-1}{K+1}$,  we conclude
that  $|\mu^*| \leqslant k$  a.e.  on $\mathbb{S}$.
 Next since   $|\mu|=|h'/g'|$   and  $h'/g'$ is  holomorphic function then  from Hardy spaces  theory
 $|\mu|\leqslant k$
on
 $\mathbb{D}$. Hence $f$ is K-qc  and therefore by {\bf Proposition  A}  bi-Lip  on
 $\mathbb{D}$.

 If  we suppose that (ii) holds on the set $E_1\subset \mathbb{S}$  and let  $\gamma_r$  be  the part of the circle $|z-z_0|=r$ in
$\mathbb{D}$. If $z_0\in E\cap E_1$ then as the above  we  first
conclude that $q_{z_0}(r)$ tends  to $K_\mu^*(z_0)/2$ and from (ii)
that  $K_\mu^*(z_0)/2\leqslant c_1$.
\end{proof}~$\Box$

\medskip
Now we will consider  a spatial version of  Proposition \ref{Pk1}.
We first need some  simple properties.
Suppose  that     $\,h\,$ is    a vector Euclidean harmonic mapping from
$\mathbb{B}^n$ into  $\mathbb{B}_M$.
Then
\begin{itemize} \label{Lhar0}
\item[(i)] If   $h(0)=0$, then

(1)  $|h(x)| \leqslant M_1 |x|$.

\item[(ii)]   $\,h\,$ is  Lipschitz  on any  ball $B(0,r_0)$, $r_0\in[0,1)$.

\item[(iii)]  In particular if, $h$ is harmonic function  on some domain $G\subset \mathbb{R}^n$, $h$   is locally
Lipschitz at every point $x_0\in G$.
\end{itemize}

(i)  It is clear that there is $M>0$    such  that    $|h(x)|
\leqslant 2 M  |x|$  for  $|x|\geqslant 1/2$ . Next  on
$\mathbb{B}_{1/2}$  partial derivatives of $h$  are bounded  and
therefore  (1) follows.

(ii)  partial derivatives of $h$  are bounded   $B(0,r_0)$ and hence  (ii) follows.

(iii)  follows from (ii).

The following lemma shows that the above properties hold in more general setting:
\medskip
\begin{lemma}{\sl\,
Let   $f: U \rightarrow \mathbb{R}^m$ be   a differentiable
function {\rm(}where  $U \subset  \mathbb{R}^n$ is open{\rm)} and  $F$ compact
subset of $U$  and suppose   that  {\rm($h6$)}: partial derivatives are
bounded on $F$ {\rm(}in particular {\rm(}$h6${\rm)} is satisfied if  $f$ is $C^1$ on
$U${\rm)}. Then $f$ is Lip on $F$.}
\end{lemma}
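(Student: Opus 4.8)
The plan is to replace the global Lipschitz bound on the possibly non-convex compact set $F$ by a uniform \emph{local} mean-value estimate together with a trivial bound for far-apart points. First I would build a compact ``collar'' of $F$ inside $U$: put $\delta:=\tfrac12\operatorname{dist}(F,\mathbb{R}^n\setminus U)$, with the convention $\delta:=1$ if $U=\mathbb{R}^n$. Since $F$ is compact, $\mathbb{R}^n\setminus U$ is closed, and the two sets are disjoint, $\delta>0$; moreover $N:=\{x:\operatorname{dist}(x,F)\le\delta\}$ is a compact subset of $U$ containing $F$. The key geometric observation is that if $x,y\in F$ and $|x-y|\le\delta$, then the whole segment $[x,y]$ lies in $N$, because each $z\in[x,y]$ satisfies $\operatorname{dist}(z,F)\le\min(|z-x|,|z-y|)\le|x-y|/2\le\delta$.

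Next I would secure a derivative bound on $N$. When $f\in C^1(U)$ --- the situation in all intended applications, e.g. $f$ harmonic --- the function $|\nabla f|$ is continuous, hence bounded on the compact set $N$ by some constant $L_0$, and then $\|f'(z)\|\le L_0$ for all $z\in N$ by the elementary inequality \eqref{eq1}. (If one only assumes (h6) literally, i.e. boundedness of the partials \emph{on $F$}, a little care is needed: the bound must be propagated to a neighbourhood of $F$, so it is cleanest to state the lemma with the derivative bound holding on such a neighbourhood --- precisely what $C^1$-regularity provides.) Granting $\|f'\|\le L_0$ on $N$, I would apply the mean-value inequality to the differentiable map $t\mapsto f(x+t(y-x))$, $t\in[0,1]$: when $|x-y|\le\delta$ its image lies in $N$, so $|f(x)-f(y)|\le\bigl(\sup_{z\in[x,y]}\|f'(z)\|\bigr)|x-y|\le L_0|x-y|$.

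It remains to treat the case $|x-y|>\delta$, which I would dispatch by brute force: $f$ is continuous and $F$ compact, so $f(F)$ is compact and $d:=\operatorname{diam}f(F)<\infty$, whence $|f(x)-f(y)|\le d\le\tfrac{d}{\delta}|x-y|$. Combining the two regimes shows $f$ is Lipschitz on $F$ with constant $L:=\max\{L_0,d/\delta\}$. The ``in particular'' clause is then automatic, since $C^1$-regularity on $U$ gives continuity, hence boundedness on the compact $N$, of $\nabla f$.

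The only real obstacle is the one handled above: $F$ need not be convex or even connected, so one cannot integrate $f'$ along a segment joining two arbitrary points of $F$ while staying in $U$; the remedy is to split into short chords (mean-value inequality on the collar $N$) and long chords (absorbed into $\operatorname{diam}f(F)$). The subsidiary point --- that the derivative estimate is needed on the collar rather than on $F$ alone --- is a non-issue in the $C^1$ setting for which the lemma is intended, but should be built into the hypotheses if one wants the statement in full generality.
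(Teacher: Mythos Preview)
Your argument follows essentially the same two-regime strategy as the paper's proof: for nearby points use the mean-value theorem along the joining segment (which lies in $U$ since $\operatorname{dist}(F,\partial U)>0$), and for far-apart points absorb the difference into $\operatorname{diam} f(F)$ via compactness.

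The one substantive difference is that you are more careful than the paper about the hypothesis. The paper writes $f_i(x+h)-f_i(x)=\nabla f_i(x+t_i h)\cdot h$ and then invokes (h6) to bound $|\nabla f_i|$ at the intermediate point $x+t_i h$; but (h6) only postulates the bound on $F$, and $x+t_i h$ need not lie in $F$. You correctly flag this and repair it by passing to a compact collar $N\supset F$ inside $U$ and using that, under the $C^1$ assumption, $|\nabla f|$ is continuous and hence bounded on $N$. So your version of the argument is the same in outline but cleaner in this detail; your remark that the literal hypothesis (h6) should really be stated on a neighbourhood of $F$ (automatically true in the $C^1$ case, which is all that is used later) is well taken.
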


\medskip
\begin{proof}
 Note that  $d={\rm dist}\,(F,\partial U)>0$. Let   $x,y \in F$
and set $h=y-x$.  If  $|h|\leqslant d$, then by  the  Mean value
theorem in several variables one finds points $x + t_i h$ on the
line segment $[x,y]$  satisfying ${\displaystyle
f_{i}(x+h)-f_{i}(x)=\nabla f_{i}(x+t_{i}h)\cdot h.}$ By  hypothesis
(h6) there is a constant $M>0$   such that  $|\nabla f_{i}
(x)|\leqslant M$  on $F$.  Hence $f_i$ is Lip on $F$ if   $|y-x
|\leqslant d$.

Suppose now  that   $|y-x |\geqslant d$. Note first since $F$ is
compact  and  $f$ is continuous  on $U$, then there is a constant
$M_1>0$   such that  $|f|$  is bounded on $F$. Next the ratio    of
$|fy-fx |$  and $|y-x |$  is bounded by  $2M_1 d^{-1}$  and we
conclude  that therefore  $f_i$  is Lip on $F$  and therefore  $f$
is Lip on $F$.
\end{proof}~$\Box$

\medskip In order to formulate a spatial version of Proposition
\ref{Pk1}  we need  some definitions and a result.

For  $x_0\in \mathbb{R}^n$ and  $0\leqslant r_1<r_2$  we define
$A(x_0,r_1,r_2)=\{x: r_1< |x-x_0|<r_2 \}$ which call a  spherical
ring. If  $x_0=0$  we write simply $A(r_1,r_2)$.

Let $G$  be an open subset of  $\mathbb{R}^n$  and  $f: G\rightarrow
\mathbb{R}^n$. We say that $f$  has finite distortion if, first of
all  if  $f\in W^{1,1}_{\rm loc}(G,\mathbb{R}^n)$  and there is a
function  $K(x)=K(x,f),1\leqslant  K(x)<\infty$, defined a.e. in
$G$   such that

(i)  $\Vert f^{\,\prime}(x)\Vert^{n}\leqslant K(x)J(x, f)$ a.e. $G$.

The smallest  $K=K_f$  satisfying (i) is called the outer dilatation function of $f$.

Kalaj and the first  author study mappings in plane and space which satisfy  the Poisson differential inequality:

(h3)  $|\triangle u|\leqslant a|\nabla u|^2+b$.

In~\cite{Ka$_3$}  Kalaj  proved  (see also   subsection \ref{ssFr1},Further
results)  for a more general result)

\medskip
{\bf Theorem K}.   {\it A quasiconformal mapping of the unit ball onto a domain with $C^2$ smooth boundary,
satisfying the Poisson differential inequality, is Lipschitz continuous}.

\medskip
\begin{proposition} \label{Pk2}{\sl\, Let $G\subset \mathbb{R}^n$  be $C^2$ domain.
If $f: {\Bbb B}^n\xrightarrow{onto} G$  harmonic homeomorphism {\rm(}or
more generally $C^2$ homeomorphism which satisfies {\rm($h3$))} and there
are  a $r_0\in (0,1)$ and non negative  function  $Q$   defined
a.e. on  the  ring  $A(r_0,1)$ such that  $K(x, f)\leqslant Q(x)$
for a.e. $x\in {\Bbb B}^n$   and {\rm(h4):}   $Q^{+}$ is bounded  on ring
$A(r_0,1)$, then $f$ is Lipschitz  on  ${\Bbb B}^n$.}
\end{proposition}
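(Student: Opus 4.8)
The plan is to convert the averaged hypothesis (h4) into a genuine pointwise bound on the distortion of $f$ in a boundary collar, then feed this into (a boundary‑localized form of) Theorem~K; the interior is then immediate from the $C^2$ assumption, and the two gradient bounds are glued using convexity of $\mathbb{B}^n$.

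First I would observe that (h4) already forces $K(\cdot,f)$ to be essentially bounded near $\mathbb{S}^{n-1}$. If $Q^{+}\le N$ on the ring $A(r_0,1)$, then $Q_{\varepsilon_0}(x_0)\le N$ shows $Q$ is locally integrable there, and the Lebesgue differentiation theorem gives $Q(x_0)=\lim_{\varepsilon\to0}Q_{\varepsilon}(x_0)\le Q^{+}(x_0)\le N$ for a.e.\ $x_0\in A(r_0,1)$. Hence (replacing $f$ by $x\mapsto f(-x)$ if necessary so that $f$ is sense‑preserving) $\Vert f'(x)\Vert^{\,n}\le K(x,f)\,J(x,f)\le N\,J(x,f)$ for a.e.\ $x\in A(r_0,1)$; since a Sobolev homeomorphism of bounded distortion is quasiconformal (here $\nabla f\in L^{n}(A(r_0,1))$ by the area formula, $f$ being $C^{1}$ there), $f|_{A(r_0,1)}$ is an $N$‑quasiconformal homeomorphism onto the ring domain $\Omega:=f(A(r_0,1))\subset G$. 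As $f$ is a homeomorphism of $\mathbb{B}^n$ onto the bounded domain $G$, the outer boundary component of $\Omega$ is exactly $\partial G$ — one checks in the usual way that $f(x)$ clusters at every point of $\partial G$ as $x\to\mathbb{S}^{n-1}$ — and $\partial G$ is $C^{2}$ by hypothesis, while $f|_{A(r_0,1)}$ still satisfies the Poisson differential inequality (h3) (with $a=b=0$ in the harmonic case).

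Next I would apply Theorem~K. Its statement concerns a quasiconformal map of the whole ball, but what is really used is that the Lipschitz bound near $\partial G$ is local at the boundary: it needs only the distortion bound and (h3) on the collar $A(r_0,1)$ together with the $C^{2}$ regularity of $\partial G$, and is insensitive to the inner boundary component of $\Omega$. (Equivalently, one appeals to the ring version of this Lipschitz theorem recorded in \S\ref{ssFr1}.) This produces $r_1\in(r_0,1)$ and $L_1<\infty$ with $|\nabla f|\le L_1$ on $A(r_1,1)$. On the complementary compact set $\overline{B(0,r_1)}\subset\mathbb{B}^n$ the map $f$ is $C^{2}$, so its partial derivatives are bounded there by some $L_0<\infty$ (the elementary lemma above). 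Therefore $\sup_{\mathbb{B}^n}|\nabla f|\le\max\{L_0,L_1\}<\infty$, and since $\mathbb{B}^n$ is convex the mean value theorem yields $|f(x)-f(y)|\le\max\{L_0,L_1\}\,|x-y|$ for all $x,y\in\mathbb{B}^n$, i.e.\ $f$ is Lipschitz on $\mathbb{B}^n$.

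The main obstacle is precisely this last invocation: upgrading a distortion bound that holds only on a boundary collar to a true gradient bound up to $\mathbb{S}^{n-1}$. This is where the Poisson differential inequality and the $C^{2}$‑smoothness of $\partial G$ are indispensable, and one must take care to apply Theorem~K in a form whose hypotheses are actually verified here — a harmonic homeomorphism of $\mathbb{B}^n$ need not be globally quasiconformal for $n\ge3$, since its Jacobian may degenerate in the interior, so the reduction to the ring $A(r_0,1)$ via (h4) is exactly what makes the boundary Lipschitz theorem applicable. A minor point worth recording is that, unlike in the earlier sections, the inversion‑across‑$\mathbb{S}^{n-1}$ device is not needed here, since Theorem~K supplies the boundary regularity directly.
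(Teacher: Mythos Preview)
Your proof is correct and follows essentially the same route as the paper: use (h4) together with Lebesgue differentiation to obtain a pointwise distortion bound on the collar $A(r_0,1)$, invoke Theorem~K there for the boundary Lipschitz estimate, and combine this with the elementary interior Lipschitz bound coming from the $C^2$ hypothesis on the compact set $\overline{B(0,r_0)}$. Your observation that Theorem~K is literally stated for the full ball rather than for a ring is well taken---the paper applies it on $A(r_0,1)$ without comment---and the more general formulation (Theorem~MM in \S\ref{ssFr1}) is indeed what justifies this step.
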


\medskip
\begin{proof} The  hypothesis  {\rm(h4)}  implies that  there is $K\geqslant 1$   such that  $K_f\leqslant K$  on  the  ring  $A(r_0,1)$. Hence by Theorem K,  $f$ is Lipschitz  on $A(r_0,1)$.
Since by Lemma \ref{Lhar0},  $f$ is Lipschitz on $B(0,r_0)$ the result follows.
\end{proof}~$\Box$

\subsection{Local spatial  version  of  Privalov theorem   for harmonic functions}

First we need some definitions and  properties of  spherical cap.
Recall  by $\omega_{n-1}$ we denote  the  surface of
$n-1$-dimensional sphere $\mathbb{S}^{n-1}$  (pay attention that
some authors prefer notation $\mathbb{S}^{n}$  for
$(n-1)$-dimensional sphere and   $\omega_{n}$ for its area).  Then
the surface $(n-1)$-dimensional measure  of sphere $S(0, r)$ of
radius $r$ is $P(r)=\omega_{n-1}r^{n-1}$.

\medskip
\begin{definition}(Spherical-polar    cap).
We can define  the spherical  cap  in terms of the so-called contact
angle (the angle between the normal to the sphere at the bottom of
the cap and the base plane). More precisely,  we use the following
notations  $\hat{x}=x/|x|$ and  $\hat{0}=e_1$,
$S(\hat{x},\gamma)=\{y\in \mathbb{S}_n: \langle y,\hat{x}\rangle
\geqslant \cos \gamma \}$ for  the polar cap with center  $\hat{x}$,
where    $\gamma$  is the spherical angle of it. In a similar way in
planar case  we define   $C(\hat{x},\gamma)=\{y\in \mathbb{S}_1:
\langle y,\hat{x}\rangle \geqslant \cos \gamma \}$.
\end{definition}
\medskip

If $f$ is  a function on  $\mathbb{S}^{n-1}$ which is constant on
$\partial\mathbb{S}^\varphi=\{t \in \mathbb{S}^{n-1}: t_n = \cos
\varphi \}$ for  $0\leqslant \varphi \leqslant \pi$  we say that $f$
depends only on $\varphi$. Let  $0\leqslant \varphi \leqslant \pi$;
the surface of  spherical (polar)  cap  $\mathbb{S}^\varphi$ of
radius $\varphi$   is
$$ A(\varphi)=\omega_{n-1}\int\limits_0^{\sin\varphi} \frac{r^{n-2}}{x_n}dr \,.$$
By change of variables $r=\sin  \theta,\, dr= \cos \theta\,
d\theta\,\,,   x_n=\cos \theta$ in the previous formula,   we find
\begin{equation}\label{fCup1a}
A(\varphi)=\omega_{n-1}\int\limits_0^\varphi \sin^{n-2}\theta
\,d\theta \,.
\end{equation}
See      also proof of  Theorem 2 and formula (11)  in Section V  in
L. Ahlfors  book \cite{Ahl$_3$}.

\medskip
\begin{proposition}\label{f.cup0}{\sl\,
If  $f$  is a  function on $\mathbb{S}^{n-1}$ which depends only
on $\varphi$,
then\\
$\int\limits_{\mathbb{S}^{n-1}}f d\sigma=  \int\limits_0^\pi
f(\varphi)\,A'(\varphi)\,d\varphi= \omega_{n-1}\int\limits_0^\pi
f(\varphi)\,\sin^{n-2} (\varphi)\,d\varphi$.}
\end{proposition}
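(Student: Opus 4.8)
The plan is to push the surface measure $\sigma$ forward under the polar-angle map and to recognize the resulting measure on $[0,\pi]$ as the Lebesgue--Stieltjes measure of $A$. First I would set $\Theta:\mathbb{S}^{n-1}\rightarrow[0,\pi]$, $\Theta(t)=\arccos(t_n)$, so that $\Theta^{-1}(\varphi)=\partial\mathbb{S}^\varphi=\{t\in\mathbb{S}^{n-1}:t_n=\cos\varphi\}$ and $\Theta^{-1}([0,\varphi])=\mathbb{S}^\varphi$ is exactly the polar cap of radius $\varphi$. By the very definition of the cap area encoded in~(\ref{fCup1a}) we have $\sigma(\mathbb{S}^\varphi)=A(\varphi)$ for every $\varphi\in[0,\pi]$; since $A(\varphi)=\omega_{n-1}\int_0^\varphi\sin^{n-2}\theta\,d\theta$ is continuous, strictly increasing on $[0,\pi]$, of class $C^1$, and $A(0)=0$, the image measure $\mu:=\Theta_{*}\sigma$ on $[0,\pi]$ coincides on half-open intervals with, hence equals, the Lebesgue--Stieltjes measure determined by $A$, and the fundamental theorem of calculus gives $d\mu=A'(\varphi)\,d\varphi=\omega_{n-1}\sin^{n-2}\varphi\,d\varphi$.

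Since $f$ depends only on $\varphi$, it is constant on each parallel $\partial\mathbb{S}^\varphi$, hence factors as $f=g\circ\Theta$ with $g$ measurable and $g(\varphi)$ equal to the common value of $f$ on $\partial\mathbb{S}^\varphi$ (this is the function written $f(\varphi)$ in the statement). The change-of-variables formula for an image measure then gives, for nonnegative measurable $f$ and then, by splitting into positive and negative parts, for $\sigma$-integrable $f$,
\[
\int_{\mathbb{S}^{n-1}}f\,d\sigma=\int_{\mathbb{S}^{n-1}}(g\circ\Theta)\,d\sigma=\int_{[0,\pi]}g\,d\mu=\int_0^\pi f(\varphi)\,A'(\varphi)\,d\varphi=\omega_{n-1}\int_0^\pi f(\varphi)\sin^{n-2}\varphi\,d\varphi,
\]
which is the assertion. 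To keep this self-contained I would first verify $\int_{\mathbb{S}^{n-1}}\chi_{\mathbb{S}^{\varphi_2}\setminus\mathbb{S}^{\varphi_1}}\,d\sigma=A(\varphi_2)-A(\varphi_1)$ for $0\leqslant\varphi_1\leqslant\varphi_2\leqslant\pi$ (immediate from $\sigma(\mathbb{S}^\varphi)=A(\varphi)$), extend it by linearity to $g\circ\Theta$ with $g$ simple in $\varphi$, and then pass to general $g$ by monotone convergence; this reproduces the displayed chain of equalities without explicitly invoking image-measure language.

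The only substantive step is the identification $\sigma(\mathbb{S}^\varphi)=A(\varphi)$, i.e. that the distribution function of the polar angle under $\sigma$ is $A$; in the present set-up this is essentially the content of~(\ref{fCup1a}), so the proof is short. As a cross-check one may instead apply the co-area formula to $\Theta$, whose spherical gradient has unit norm, which yields $\int_{\mathbb{S}^{n-1}}f\,d\sigma=\int_0^\pi f(\varphi)\,\mathcal{H}^{n-2}(\partial\mathbb{S}^\varphi)\,d\varphi$ with $\mathcal{H}^{n-2}(\partial\mathbb{S}^\varphi)=A'(\varphi)$; alternatively, writing $t=(\sin\varphi\,\omega,\cos\varphi)$ with $\omega\in\mathbb{S}^{n-2}$ exhibits the surface element as $\sin^{n-2}\varphi\,d\varphi$ times the area element of $\mathbb{S}^{n-2}$ and leads to the same formula. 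The hard part, if any, is purely bookkeeping: checking that the exceptional set $\{\varphi=0,\pi\}$, where $\Theta$ is not smooth, is $\sigma$-negligible and that $g$ inherits measurability from $f$.~$\Box$
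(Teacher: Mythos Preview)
Your proof is correct and follows essentially the same idea as the paper: both rest on the identity $\sigma(\mathbb{S}^\varphi)=A(\varphi)$ and build the integral formula from it, the paper via Riemann--Stieltjes sums $\sum f(\xi_k)\bigl(A(\varphi_k)-A(\varphi_{k-1})\bigr)$ converging simultaneously to both sides, and you via the equivalent pushforward/simple-function argument. Your version is more carefully stated (identifying $\Theta_*\sigma$ with the Lebesgue--Stieltjes measure of $A$ and passing to general $f$ by monotone convergence), but the underlying mechanism is the same.
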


\medskip
We only outline a proof. Let  $ 0=\varphi_0 <\varphi_1 < \varphi_2 <
\dots \varphi_n =\pi$, $ \varphi_{k-1} < \xi_k < \varphi_k$ and

$S_n=\sum\limits_{k=1}^n   f(\xi_k)
\big(A(\varphi_k)-A(\varphi_{k-1})\big)$. Then  $S_n\rightarrow
\int\limits_{\mathbb{S}^{n-1}}f d \sigma$  and  $S_n\rightarrow
\int\limits_0^\pi f(\varphi)\,A'(\varphi)\,d\varphi $  when
$n\rightarrow \infty$. Hence since $A'(\varphi)=\omega_{n-1}
\sin^{n-2}\varphi$ the proof follows.

\section{Proof of Theorem~\ref{thmloc1}}\label{SharHoldB}
 Before to proceed to the
proof we need a few definition and some elementary propererties  of
spatial harmonic functions.
\begin{definition}
Recall  if  $x,y\in \mathbb{R}^n$  by $|x-y|$ we denote  Eucledian distance between $x$ and $y$.

Further in this definition,  we  suppose  that  (i) $G$ is a domain  in   $\mathbb{R}^n$   and  $f:G \rightarrow
\mathbb{R}^m$.
\begin{enumerate}
\item  We say   $f$  is locally  H\"{o}lder  ($\alpha$ -H\"{o}lder) at
$x_0 \in G$  if

$Lf_\alpha (x_0)= \limsup\limits_{G\ni x\rightarrow x_0} |f(x)-
f(x_0)/|x-x_0|^\alpha < \infty$  when   $G\ni x \rightarrow x_0$.

If   $\alpha =1$ we say that  $f$   is locally   Lipschitz at $x_0$. We write $Lf (x_0)$  instead  $Lf_\alpha (x_0)$.
We can adapt the above definition for  $f: \mathbb{S}^{n-1}\rightarrow \mathbb{R}^m$.

We say that $f$  is H\"{o}lder continuous, when there are nonnegative real constants $C, 0 <\alpha \leq 1$, such that
$${\displaystyle |f(x)-f(y)|\leq C |x-y|^{\alpha}} $$
for all $x$ and $y$ in the domain  $G$  of $f$.  The number $\alpha$ is called the exponent of the H\"{o}lder condition  and we also say $f$ is   $\alpha$-H\"{o}lder continuous  and write  $f\in {\rm Lip}_\alpha(G,\mathbb{R}^m)$. By   $|f|_{C^{0, \alpha}}$  we denote the smallest constant for which the pervious inequality holds
and call H\"{o}lder norm of $f$ on $G$.

\item If $\alpha= 1$  in the previous inequality, then we say  that  the function satisfies a Lipschitz condition  or it is  Lipschitz continuous (shortly Lip) on $G$  with multiplicative constant $C$.
  If $m=n$  in (i)   and  $f$  is homeomorphism  and  both $f$ and $f^{-1}$ are Lipschitz    we say that $f$ is bi-Lipschitz  (shortly bi-Lip).

\item   If the function $f$ and its derivatives up to order $k\in \mathbb{N}$ are bounded on the closure of $G$  and  $\alpha$-H\"{o}lder continuous, then we say that  $f$ belongs to the H\"{o}lder space ${\displaystyle C^{k,\alpha}({\overline {G}})}$.

\item  Let  $\gamma$  be a closed rectifiable Jordan planar curve  of length $s_0$,   $G$ domain  enclosed by   $\gamma$ and
$s$ an  arc length parametar  on $\gamma$, and  $\gamma_0(s)$,  $s\in [0,s_0]$, arc length parametarization of  $\gamma$. We say that $G$ is  $C^{k,\alpha}$ domain if  $\gamma_0$
is  $C^{k,\alpha}$ on  $[0,s_0]$.
In the literature  planar domain $D$ is  called  Lyapunov  domain  if $D$   has  smooth $C^{1,\alpha}$- boundary  for some $0<\alpha<1$.
\end{enumerate}
\end{definition}

In the proof of next theorem we use  the representation of  harmonic functions (see  formula (\ref{RepHar1}) below)  by means  the  Poisson kernel for harmonic functions  on  the unit ball $\mathbb{B}^n$ which is  given by
$$P(x,\eta)=\frac{1-|x|^2}{\omega_{n-1} |x-\eta|^n}\,,$$
where $x\in\mathbb{B}^n$  and  $\eta\in\mathbb{S}^{n-1}$,   and
positive Borel measure $d\sigma$   on $\mathbb{S}^{n-1}$.
By   $d\sigma$ we denote
positive Borel measure on $\mathbb{S}^{n-1}$ invariant with respect  to orthogonal    
group $O(n)$  normalized such that   $\sigma(S^{n-1})=1$.

\medskip
First  recall the statement of  Theorem  \ref{thmloc1}.

\medskip
\begin{theorem}\label{thmloc0}{\sl\,
Suppose  that $0 < \alpha<1$,    $\,h\,$ is    a Euclidean harmonic
mapping from $\mathbb{B}^n$ which is
continuous on  $\overline{{\Bbb B}^{n}}$ , and \\
(h1)  Let $x_0\in \mathbb{S}^{n-1}$  and   $ |h(x)-h(x_0) |
\leqslant M |x-x_0|^\alpha$ for $x\in \mathbb{S}^{n-1}$.

Then there is a constant   $M_n$   such that

$(1-r)^{1-\alpha} |h'(r x_0)|\leqslant M_n$, $0\leqslant  r <1$. }
\end{theorem}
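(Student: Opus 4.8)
The plan is to work from the Poisson representation of the harmonic map. Since $h$ is continuous on $\overline{\mathbb{B}^n}$ and harmonic inside, it coincides there with the Poisson integral of its boundary values, cf.~(\ref{RepHar1}): $h(x)=\int_{\mathbb{S}^{n-1}}P(x,\eta)\,h(\eta)\,d\sigma(\eta)$ for $x\in\mathbb{B}^n$, with $P$ the Poisson kernel recalled above. Constants are harmonic, so $\int_{\mathbb{S}^{n-1}}P(x,\eta)\,d\sigma(\eta)$ is constant in $x$ and, differentiating under the integral sign (legitimate, since $P(\cdot,\eta)$ is $C^\infty$ on the open ball with locally uniformly bounded derivatives), $\int_{\mathbb{S}^{n-1}}\nabla_x P(x,\eta)\,d\sigma(\eta)\equiv 0$. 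Subtracting the constant $h(x_0)$ componentwise we get, for each coordinate $h_i$,
\[
\nabla_x h_i(x)=\int_{\mathbb{S}^{n-1}}\nabla_x P(x,\eta)\,\bigl(h_i(\eta)-h_i(x_0)\bigr)\,d\sigma(\eta),
\]
and since $|h_i(\eta)-h_i(x_0)|\le|h(\eta)-h(x_0)|\le M|\eta-x_0|^\alpha$ on $\mathbb{S}^{n-1}$, everything reduces to bounding $\int_{\mathbb{S}^{n-1}}|\nabla_x P(rx_0,\eta)|\,|\eta-x_0|^{\alpha}\,d\sigma(\eta)$ by a multiple of $(1-r)^{\alpha-1}$.

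Next I would record the elementary identity $|rx_0-\eta|^2=(1-r)^2+r\,|x_0-\eta|^2$ valid for $\eta\in\mathbb{S}^{n-1}$; combined with $a^2+b^2\ge\frac12(a+b)^2$ it gives $|rx_0-\eta|\ge\frac12\bigl((1-r)+|x_0-\eta|\bigr)$ once $r\ge\frac12$. The range $0\le r\le\frac12$ is disposed of separately and at once: there $\nabla h$ is bounded on $\overline{B(0,1/2)}$ (harmonic functions are locally Lipschitz, as recorded earlier) and $(1-r)^{1-\alpha}\le1$. A direct computation of $\nabla_x P$ together with $1-|x|^2\le2(1-|x|)$ gives $|\nabla_x P(x,\eta)|\le C\bigl(|x-\eta|^{-n}+(1-|x|)\,|x-\eta|^{-n-1}\bigr)$. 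Writing $\rho=1-r$, $\delta=|x_0-\eta|$, inserting the lower bound on $|rx_0-\eta|$, and using $\rho/(\rho+\delta)\le1$, the whole matter reduces to showing $I(\rho):=\int_{\mathbb{S}^{n-1}}\delta^{\alpha}(\rho+\delta)^{-n}\,d\sigma(\eta)\le C(n,\alpha)\,\rho^{\alpha-1}$.

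For $I(\rho)$ I would use a dyadic decomposition of $\mathbb{S}^{n-1}$ according to the size of $\delta=|x_0-\eta|$, with the spherical-cap estimate $\sigma(\{\eta:|x_0-\eta|<t\})\le C\,t^{n-1}$ that follows from (\ref{fCup1a}). On the cap $\{\delta\le\rho\}$ the integrand is $\le\rho^{\alpha-n}$ and the measure is $\le C\rho^{n-1}$, giving $\le C\rho^{\alpha-1}$; on the annulus $\{2^{j}\rho\le\delta<2^{j+1}\rho\}$ one gets $\le C\,2^{j(\alpha-1)}\rho^{\alpha-1}$; and the remaining region $\delta\gtrsim1$ contributes only $O(1)\le C\rho^{\alpha-1}$. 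Summing the annular contributions over $j\ge0$, the series $\sum_{j\ge0}2^{j(\alpha-1)}$ converges \emph{precisely because} $\alpha<1$, yielding $I(\rho)\le C(n,\alpha)\rho^{\alpha-1}$. Then $|\nabla h_i(rx_0)|\le C\,M\,(1-r)^{\alpha-1}$ for each $i$, and summing over $i$ gives $(1-r)^{1-\alpha}|h'(rx_0)|\le M_n$ with $M_n=M_n(n,\alpha,M)$.

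The step I expect to be the main obstacle is the last one: carrying the singular-integral estimate through with the correct exponents and seeing that $\alpha<1$ is exactly the threshold making the dyadic series converge — for $\alpha=1$ the identical computation produces the classical logarithmic factor $\log\frac{1}{1-r}$ rather than a bounded one. A secondary point requiring care is keeping the normalization of the Poisson kernel consistent with the normalized measure $d\sigma$ and tracking the dependence of the constants on $n$, $\alpha$, and the boundary Hölder constant $M$ throughout.
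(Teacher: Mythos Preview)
Your argument is correct and follows essentially the same route as the paper: Poisson representation, subtraction of the constant $h(x_0)$ via $\int\nabla_xP\,d\sigma=0$, the pointwise bound $|\nabla_xP(x,\eta)|\lesssim |x-\eta|^{-n}$, and separate treatment of $r\le 1/2$. The only difference is cosmetic: to bound $\int_{\mathbb{S}^{n-1}}|e_n-\eta|^{\alpha}|x-\eta|^{-n}\,d\sigma$ the paper passes to the polar angle via the spherical-cap area formula~(\ref{fCup1a}) and substitutes $\theta=(1-r)u$ to get a convergent one-dimensional integral (Proposition~\ref{prop2a}), whereas you use a dyadic decomposition in $\delta=|x_0-\eta|$; both are standard and give the same $(1-r)^{\alpha-1}$ with convergence precisely when $\alpha<1$.
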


\medskip
\begin{proof}
Let  $h_b$ denote  the restriction of $h$ on   $\mathbb{S}^{n-1}$.
Since   $h$ is harmonic on $\mathbb{B}^n$  and continuous on
$\overline{{\Bbb B}^{n}}$, then
\begin{equation}\label{RepHar1}
h(x)=\int\limits_{\mathbb{S}^{n-1}} P(x,\eta) h_b(\eta)
d\sigma(\eta)
\end{equation}
for every $x\in \mathbb{B}^n$. Set  $d:= 1  - |x|^2$.  By
computation $\partial_{x_k} P(x,t)= -(\frac{2 x_k}{|x-t|^n} + d
n\frac{x_k-t_k}{|x-t|^{n+2}})$. Hence  if  $d\leqslant |x-t|$,  then

(1)  $ |\partial_{x_k} P(x,t)|\leqslant c_1 \frac{1}{|x-t|^n}$.

Let  $x=r e_n$  and  $\theta$ the angle between  $t$  and $e_n$. Then
$s:= |x-t|^2= 1 -2r \cos \theta + r^2$ depends only on $\theta$ for
fixed  $x$.
Next  since  $\int\limits_{\mathbb{S}^{n-1}} \partial_k P(x,t)
h(e_n) d\sigma(t)=0$, we find
\begin{equation}\label{}
\partial_{x_k}h(x) = \int\limits_{\mathbb{S}^{n-1}} \partial_k P(x,t) \big(h(t) -h(e_n)\big)  d\sigma(t)\,.
\end{equation}
Hence by  (1) and the hypothesis  $(h1)$, we get
\begin{equation}\label{est.loc0}
|\partial_{x_k}h(x)| \leqslant c_2 \int\limits_{\mathbb{S}^{n-1}}
\frac{|e_n -t|^\alpha}{|x-t|^n} d\sigma(t)\,.
\end{equation}
Therefore  the   proof of Theorem  \ref{thmloc0} is reduced to the proof of the following proposition.
\end{proof}~$\Box$

\medskip
\begin{proposition}\label{prop2a}{\sl\, Suppose  that $0 <\alpha<1$  and $x=re_n$, $0<r<1$. Then
$$I_\alpha(r e_n)=: \int\limits_{\mathbb{S}^{n-1}} \frac{|e_n -t|^\alpha}{|x-t|^n}
d\sigma(t)\leqslant c\cdot \frac 1{(1-r)^{1-\alpha}},
$$}
where   $c=c(\alpha,n)$  is a positive constant which depends only on  $n$ and $\alpha$.
\end{proposition}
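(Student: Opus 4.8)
The plan is to use the rotational symmetry of the integrand to collapse $I_\alpha(re_n)$ to a single one-dimensional integral, and then estimate that integral by splitting its domain at the natural scale $1-r$.

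First I would observe that for $t\in\mathbb S^{n-1}$ both $|e_n-t|$ and $|re_n-t|$ depend only on the angle $\theta=\arccos\langle t,e_n\rangle$ between $t$ and $e_n$: namely $|e_n-t|^2=2-2\cos\theta=4\sin^2(\theta/2)$ and $|re_n-t|^2=1-2r\cos\theta+r^2=(1-r)^2+4r\sin^2(\theta/2)$. Hence the integrand $|e_n-t|^\alpha/|re_n-t|^n$ is a function of $\theta$ alone, and it is continuous on $\mathbb S^{n-1}$ because $r<1$ keeps the denominator bounded away from $0$. Applying Proposition~\ref{f.cup0} then gives, up to the fixed constant $\omega_{n-1}$,
$$I_\alpha(re_n)=\omega_{n-1}\int\limits_0^\pi\frac{(2\sin(\theta/2))^\alpha}{\big((1-r)^2+4r\sin^2(\theta/2)\big)^{n/2}}\,\sin^{n-2}\theta\,d\theta\,.$$

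Next I would dispose of the trivial range $r\leqslant 1/2$: there the denominator is $\geqslant(1-r)^n\geqslant 2^{-n}$ while the numerator is bounded, so $I_\alpha(re_n)\leqslant C_n$, and since $(1-r)^{1-\alpha}\geqslant 2^{\alpha-1}$ the asserted inequality holds at once. So I may assume $r\in[1/2,1)$ and set $\delta:=1-r\in(0,1/2]$. Using the elementary bounds $\frac{\theta}{\pi}\leqslant\sin(\theta/2)\leqslant\frac{\theta}{2}$ and $\sin\theta\leqslant\theta$ on $[0,\pi]$, the numerator is $\leqslant 2^\alpha(\theta/2)^\alpha\theta^{n-2}=\theta^{\,n-2+\alpha}$, and (since $4r\geqslant 2$) the denominator is $\geqslant\big((1-r)^2+\frac{2}{\pi^2}\theta^2\big)^{n/2}\geqslant c_n(\delta^2+\theta^2)^{n/2}$ with $c_n=(2/\pi^2)^{n/2}$. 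Therefore
$$I_\alpha(re_n)\leqslant C_n\int\limits_0^\pi\frac{\theta^{\,n-2+\alpha}}{(\delta^2+\theta^2)^{n/2}}\,d\theta\,.$$

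Finally I would split this last integral at $\theta=\delta$. On $[0,\delta]$ the denominator is $\geqslant\delta^n$, yielding a contribution $\leqslant\delta^{-n}\int_0^\delta\theta^{\,n-2+\alpha}\,d\theta=\frac{\delta^{\alpha-1}}{n-1+\alpha}$. On $[\delta,\pi]$ the denominator is $\geqslant\theta^n$, yielding a contribution $\leqslant\int_\delta^\pi\theta^{\alpha-2}\,d\theta$; since $\alpha<1$ we have $\alpha-2<-1$, so this equals $\frac{\delta^{\alpha-1}-\pi^{\alpha-1}}{1-\alpha}\leqslant\frac{\delta^{\alpha-1}}{1-\alpha}$. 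Adding the two contributions gives $I_\alpha(re_n)\leqslant c(\alpha,n)\,\delta^{\alpha-1}=c(\alpha,n)(1-r)^{\alpha-1}$, which is the claim. The two things to be careful about are the uniformity of all constants in $r$ — handled by isolating the cases $r\leqslant1/2$ and $r\geqslant1/2$ — and the role of $\alpha<1$: it is precisely what makes $\int_\delta^\pi\theta^{\alpha-2}\,d\theta$ comparable to $\delta^{\alpha-1}$ rather than producing a logarithmic factor, so the borderline $\alpha=1$ would give the weaker growth $|h'(rx_0)|\lesssim\log\frac{1}{1-r}$ instead.
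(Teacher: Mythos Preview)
Your proof is correct and follows essentially the same approach as the paper: both reduce to a one-dimensional integral via Proposition~\ref{f.cup0}, separate out the range $r\leqslant 1/2$, and identify $\delta=1-r$ as the governing scale. The only cosmetic difference is the final step: the paper extends the integral to $[0,\infty)$ and substitutes $\theta=(1-r)u$ to reduce to the universal convergent integral $J(\alpha)=\int_0^\infty u^{\alpha+n-2}(1+u^2)^{-n/2}\,du$, whereas you split at $\theta=\delta$ and bound each piece directly --- these are two standard, equivalent ways to handle the same integral, and both rely on $\alpha<1$ for exactly the reason you note.
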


\medskip Using similar  approach  if  $\omega$ is a majorant one
can prove

$$I_\omega(r e_n)=: \int\limits_{\mathbb{S}^{n-1}} \frac{\omega(|e_n -t|)}{|x-t|^n}
d\sigma(t)\leqslant c\cdot \frac {\omega(\delta_r)} {\delta_r}.
$$

\medskip
\begin{proof}
We use        spherical cups    $S^{\theta}$  defined  by  $t_n  >
\cos \theta$  and  integration with parts. Since for a fixed
$\theta\in [0,\pi]$, $|e_n -t|\leqslant \theta$  for  $t\in
S^{\theta}$, by  an application of    Proposition \ref{f.cup0} to
$f(t)=\frac{|e_n -t|^\alpha}{|x-t|^n}$, we get (see also Remark
\ref{Rm1} below)
\begin{eqnarray}\label{eqH4}
I_\alpha(r e_n) \leqslant c_3 \int\limits_0^\pi \frac
{|\theta|^{n-2}
|\theta|^{\alpha}} {((1-r)^2+\frac{4r}{\pi^2}\theta^2)^{n/2}}\,d\theta < \\
\label{eqH5} c_4\int\limits_0^{\infty}\frac {\theta^{\alpha +n-2}}
{\left((1-r)^2+\frac{4r}{\pi^2}\, \theta^2\right)^{n/2}}\,d\theta\,.
\end{eqnarray}
Next using  $(1+\frac{4r}{\pi^2} u^2)^{-1} \leqslant c_5
(1+u^2)^{-1}$  for  $\frac 12 \leqslant r <1$  and  the  change of
variable  $\theta=(1-r) u$, we find
\begin{equation}
I_\alpha(r e_n) \leqslant c_6 {(1-r)^{\alpha -1
}}\int\limits_0^{\infty}\frac{u^{\alpha +n-2}}{(1+u^2)^{n/2}}\,d
u\,.
\end{equation}
Denote by $J(\alpha)$   the last expression  on the right hand side  of previous formula.
Hence  since  $g(u)= \frac{u^{\alpha +n-2}}{(1+u^2)^{n/2}}\sim u^{\alpha-2}$ for   $u\rightarrow  + \infty$ and  by  hypothesis   $0 <\alpha<1$  and therefore   $\alpha-2 <-1$,  the integral $J(\alpha)$  converges  and
and therefore

(i)  $I_\alpha(r e_n) \leqslant c_7 (1-r)^{\alpha -1}$  for  $\frac
12 \leqslant r <1$.


$(1-r)^{1-\alpha} A(r)$ is continuous on  $[0,1/2]$ and attains a maximum $c_8$, that is

(ii)    $I_\alpha(r e_n) \leqslant c_9 (1-r)^{\alpha -1}$  for  $0
\leqslant r \leqslant  \frac 12 $, where  $c_9= c_3c_8 $.

Hence  from (i) and (ii)  with $c=\max \{c_7,c_9 \}$  the proof of Proposition    follows.
\end{proof}~$\Box$

\medskip
Combining Proposition~\ref{prop2a} and (\ref{est.loc0}) we get proof
of Theorem.

\medskip
\begin{remark}\label{Rm1}
It is convenient  to denote  expressions  by  $A(r)$  and
$B(r)$ that appear  on the right-hand side  in formula~(\ref{eqH4}) and
(\ref{eqH5})  without constants $c_3$  and $c_4$  respectively. Note
that  $A(0)$ is finite and that  $B(0)= +\infty$. In order to
estimate  $A(r)$  we use  the  change of variable  $\theta=(1-r) u$
and therefore the integral $A(r)$ can be  transformed  to integral
over   $[0,a(r)]$  with respect to $u$, where  $a(r)=\pi
(1-r)^{-1}$.  Since  $a(r)\rightarrow \infty$ if  $r\rightarrow  1$,
it is convenient to
estimate integral $A(r)$  by integral  $B(r)$  over interval  $[0,\infty)$.
\end{remark}

\medskip
\begin{remark}
Instead of  (ii)  we can based the proof of Theorem  on the
following inequality:
$$|\partial_{x_k}h(x)|  \leqslant c_7 \cdot \frac 1{(1-r)^{1-\alpha}}$$  for $\frac 12 \leqslant |x|<1 .$
Hence since on  $\mathbb{B}_{1/2}$  partial derivatives are bounded  readably  proof of   Theorem \ref{thmloc0}  follows.
Note that the above proof breaks down for  $\alpha=1$ because  $J(1)= \infty$.
Moreover,  for each  $n=2$, there is a Lipschitz continuous map  $f:\mathbb{S}^{n-1}\rightarrow \mathbb{R}^n$   such that  $u=P[f]$  is not Lipschitz continuous.
In planar case, consider $f=u+iv$  such that  $z f'=- \log (1-z)$. $u'_\theta$ is bounded while its harmonic conjugate $ru'_r$ is not bounded.
In spatial case, consider $U(x_1,x_2,... x_n)= u(x_1+ix_2,x_3,... x_n)$.
\end{remark}

\medskip
\subsection{Further results}\label{ssFr1}
Using an approach as  in~\cite{AKM$_1$}, we can prove further
results. Here we only  announce  the following results:

\medskip
\begin{theorem}{\sl\,
Suppose  that $f:\mathbb{S}^{n-1}\rightarrow \mathbb{R}^n$  is
locally Lipschitz {\rm (Lip-$1$)}   at $x_0 \in \mathbb{S}$,  $f\in
L^\infty(\mathbb{S}^{n-1})$   and $\,h\,=P[f]$ is a Euclidean
harmonic mapping from
$\mathbb{B}^n$.\\
Then
\begin{itemize}
\item[{\rm S4)}]  $$|h'(r x_0)T|\leqslant M $$

\noindent for every  $0\leqslant r <1$ and unit vector  $T$ which is
tangent on $\mathbb{S}^{n-1}_r$  at  $r x_0$,  where $M$ depends
only on $n$, $|f|_\infty$  and  $Lf (x_0)$.

If we suppose in addition that  $h$ is  K-quasiregular (shortly K-qr)  mapping  along  $[o,x_0)$,  then\\
\item[{\rm S5)}]   $$|h'(r x_0)|\leqslant K \,M$$
for every  $0\leqslant r <1$.
\end{itemize}}
\end{theorem}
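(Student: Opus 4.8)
The plan is to run the argument of Theorem~\ref{thmloc1} for the \emph{tangential} derivative of $h$, the point being that $D_TP(x,\cdot)$ decays one power faster than a generic first derivative of the Poisson kernel, and then, for (S5), to pass from the tangential bound to a bound on the whole matrix $h'$ by the dilatation inequality. Normalize so that $x_0=e_n$ and write $x=re_n$; a unit vector $T$ tangent to $\mathbb{S}^{n-1}_r$ at $x$ is then a unit vector with $T\perp x_0$. Starting from $h=P[f]$ and using $P[1]\equiv1$ (so $\int_{\mathbb{S}^{n-1}}D^x_TP(x,\eta)\,d\sigma(\eta)=0$), I would write
\[
h'(x)T=D_Th(x)=\int_{\mathbb{S}^{n-1}}D^x_TP(x,\eta)\,\bigl(f(\eta)-f(x_0)\bigr)\,d\sigma(\eta)\,.
\]
An elementary computation gives $\nabla_xP(x,\eta)=-\dfrac{2x}{\omega_{n-1}|x-\eta|^{n}}-\dfrac{n(1-|x|^2)(x-\eta)}{\omega_{n-1}|x-\eta|^{n+2}}$; dotting with $T$ the first term vanishes since $x\cdot T=0$, and $(x-\eta)\cdot T=-(\eta-x_0)\cdot T$, whence
\[
|D^x_TP(x,\eta)|\ \le\ \frac{n(1-|x|^2)\,|\eta-x_0|}{\omega_{n-1}|x-\eta|^{n+2}}\ \le\ \frac{C_n(1-r)}{|x-\eta|^{\,n+1}}\,,
\]
using $1-|x|^2\le2(1-r)$ and $|\eta-x_0|\le2|x-\eta|$. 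The extra factor $(1-r)/|x-\eta|\le1$, absent for a generic direction (where one only has $|\nabla_xP|\le C_n|x-\eta|^{-n}$), is what drives everything.

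Next I would fix $\rho\in(0,1)$ with $|f(\eta)-f(x_0)|\le L|\eta-x_0|$ for $|\eta-x_0|<\rho$, where $L:=Lf(x_0)+1$, and treat $r$ bounded away from $1$ and $r$ near $1$ separately. For $r$ away from $1$ I would invoke the standard interior gradient estimate $|h'(re_n)|\le C_n(1-r)^{-1}\sup_{\mathbb{B}^n}|h|\le C_n(1-r)^{-1}|f|_\infty$, bounded on that range. For $1-r$ small, split $\mathbb{S}^{n-1}=A_1\cup A_2$, $A_1=\{|\eta-x_0|<\rho\}$. On $A_2$ one has $|f(\eta)-f(x_0)|\le2|f|_\infty$ and $|x-\eta|\ge\rho/2$, so that contribution is $\le C(n,\rho)|f|_\infty(1-r)$. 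On $A_1$, parametrizing $\eta$ by the polar angle $\theta=\angle(e_n,\eta)$ and using Proposition~\ref{f.cup0} ($d\sigma\asymp\theta^{n-2}d\theta$ for radial integrands), with $|\eta-x_0|=2\sin(\theta/2)\le\theta$, $|f(\eta)-f(x_0)|\le L\theta$ and $|x-\eta|^2\ge(1-r)^2+\tfrac{4r}{\pi^2}\theta^2$, the contribution is
\[
\lesssim\ L(1-r)\int_0^{\rho}\frac{\theta^{\,n-1}\,d\theta}{\bigl((1-r)^2+\tfrac{4r}{\pi^2}\theta^2\bigr)^{(n+1)/2}}\ =\ L\int_0^{\rho/(1-r)}\frac{u^{\,n-1}\,du}{\bigl(1+\tfrac{4r}{\pi^2}u^2\bigr)^{(n+1)/2}}
\]
after the substitution $\theta=(1-r)u$; for $r\ge\tfrac12$ the last integral is $\le C_n\int_0^\infty u^{\,n-1}(1+u^2)^{-(n+1)/2}\,du<\infty$, the integrand being $O(u^{-2})$ at infinity. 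This is exactly the $\alpha=1$ borderline of Proposition~\ref{prop2a}: the analogue of the integral $J(\alpha)$ there now converges, precisely because the tangential derivative supplied the missing power. Putting the pieces together yields (S4), with the constant depending on $n$, $|f|_\infty$, $Lf(x_0)$ and the radius $\rho$ realising the local Lipschitz bound (part of the hypothesis).

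For (S5): by (S4), $|h'(re_n)v|\le M|v|$ for every $v$ in the hyperplane $x_0^\perp$, i.e. the restriction $h'(re_n)|_{x_0^\perp}$ has operator norm $\le M$. By interlacing of singular values under restriction of the domain to a codimension-one subspace, the second singular value of $h'(re_n)$ satisfies $\lambda_2\le\|h'(re_n)|_{x_0^\perp}\|\le M$, so $l(h'(re_n))=\lambda_n\le\lambda_2\le M$. Since $h$ is $K$-quasiregular along $[0,x_0)$, the inner dilatation satisfies $K_I(re_n,h)=|J(re_n,h)|/l(h'(re_n))^n=\prod_{i=1}^n(\lambda_i/\lambda_n)\ge\lambda_1/\lambda_n$ and $K_I(re_n,h)\le K$, hence $\|h'(re_n)\|=\lambda_1\le K\lambda_n\le KM$, which is (S5).

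The hard part will be the first step: confirming that the tangential derivative of the Poisson kernel genuinely gains a power and that, as a consequence, the critical integral converges at the borderline $\alpha=1$ where the proof of Theorem~\ref{thmloc1} breaks down — concretely, that the exponents after the scaling $\theta=(1-r)u$ produce an integrable $O(u^{-2})$ tail. Once that is secured, dealing with the one-point Lipschitz hypothesis through the $A_1/A_2$ splitting together with the $(1-r)$-decay of the far contribution is routine, and (S5) is elementary linear algebra.
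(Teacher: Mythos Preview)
The paper does not give a proof of this theorem: it is stated in the ``Further results'' subsection with the remark that it can be proved ``using an approach as in~[AKM$_1$]'' and is merely announced. So there is no argument in the paper to compare against, and the relevant question is simply whether your proposal is correct. It is.

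Your proof of (S4) is the natural adaptation of the paper's proof of Theorem~\ref{thmloc1}: the key computation $\nabla_xP(x,\eta)\cdot T=-\dfrac{n(1-|x|^2)\,(\eta-x_0)\cdot T}{\omega_{n-1}|x-\eta|^{n+2}}$ for $T\perp x$ is correct, and together with $|\eta-x_0|\le 2|x-\eta|$ and $1-|x|^2\le 2(1-r)$ it yields $|D_TP(x,\eta)|\le C_n(1-r)|x-\eta|^{-(n+1)}$, exactly one power better than the generic bound used in Theorem~\ref{thmloc1}. After the substitution $\theta=(1-r)u$ the tail is $O(u^{-2})$, so the analogue of $J(\alpha)$ converges at the borderline $\alpha=1$, which is the whole point. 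The $A_1/A_2$ splitting and the interior gradient estimate for $r$ bounded away from $1$ are routine. One small remark: as you correctly flag, the bound you obtain depends on the radius $\rho$ at which the local Lipschitz condition is realised, not only on $Lf(x_0)$ and $|f|_\infty$; the paper's formulation is slightly loose on this point, but this is not a defect of your argument.

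Your proof of (S5) is also correct. The step $\lambda_2(h'(re_n))\le \|h'(re_n)|_{x_0^\perp}\|$ is indeed Cauchy interlacing (equivalently, the Courant--Fischer min--max $\lambda_2=\min_{\dim V=n-1}\max_{|v|=1,\,v\in V}|Av|$ with $V=x_0^\perp$), whence $l(h')=\lambda_n\le\lambda_2\le M$. From $K$-quasiregularity along the radius one gets $\lambda_1/\lambda_n\le K$ (directly if $K$ refers to the linear dilatation; via $\prod_i(\lambda_i/\lambda_n)\ge \lambda_1/\lambda_n$ if $K$ bounds $K_I$; with $K$ replaced by $K^{1/(n-1)}$ if $K$ bounds $K_O$), hence $\|h'(re_n)\|=\lambda_1\le K\lambda_n\le KM$.
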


We can extend  our results to   class of moduli functions which
include $\omega(\delta)=\delta^{\alpha}~(0< \alpha \leqslant 1)$, so
our result generalizes earlier results on H\"{o}lder continuity (see
\cite{NO}) and Lipschitz continuity (see~\cite{AKM$_1$}).

In addition,  concerning further research  we suggest some possibility.
Suppose that domains $D$ and  $\Omega$ are bounded domains in
$\mathbb{R}^n$  and its boundaries belong to class  $C^{k,\alpha},
0\leqslant \alpha \leqslant 1,k\geqslant 2$ (more generally  $C^2$).
Suppose further that $g$ and $g'$ are  $C^1$  metric on
$\overline{D}$  and $\overline{\Omega}$  respectively.  Using inner
estimate (cf. Theorem 6.14 \cite{GT}[10]) we can prove

{\bf Theorem MM} (Theorem 6.9 \cite{M$_2$}). {\it  If $u:D\rightarrow
\Omega$ a qc {\rm($g,g'$)}-harmonic map {\rm(}or satisfies {\rm($h3$))}, then $u$ is
Lipschitz on $D$.}

We discussed this result at Workshop on Harmonic Mappings and Hyperbolic Metrics,  Chennai,  India, Dec. 10-19, 2009, and in  \cite{M$_2$}, where a proof is outlined.
For more details see \cite{mm.Spring2021}.

We now present a few open questions.

Using {\bf Theorem MM} or   {\bf Theorem K} we can prove.

{\bf Theorem B}.   {\it Let $G\subset \mathbb{R}^n$  be $C^2$ domain. If $f: {\Bbb B}^n\xrightarrow{onto} G$  harmonic homeomorphism  and there is a $r_0\in (0,1)$ such that  $(h4)$:   $Q^{+}$ is bounded  on ring  $A(r_0,1)$, then $f$ is Lipschitz  on  ${\Bbb B}^n$.}
\bques
If we suppose instead of {\rm($h4$)} only {\rm($h5$)}:  $Q^{+}$ is bounded  on $\mathbb{S}^{n-1}$, whether  $f$ is Lipschitz  on  ${\Bbb B}^n$?

What is right version of  Theorem  1.1  and   4.1   if in addition it is supposed that     $f$ is  harmonic?
\eques


\section{On Bi-Lipschitz  qc maps}\label{biLip}

Recall  that the condition (\ref{eq11B}) provides sufficient conditions for H\"{o}lder and Lipschitz continuity.
In this section we  show  that  in some situation if Beltrami   coefficient is H\"{o}lder continuous that the map is bi-Lip.
In order to discuss the subject we first need some preliminaries.
\begin{definition}
\begin{enumerate}
\item
Let $f$ be  a  complex valued  function defined an open set planar set $V$.
We use notation  $z=x+iy$ for complex  numbers and  for complex partial derivatives  $f_z:= (f_x -if_y)/2$  and   $f_{\overline{z}}:= (f_x + if_y)/2$,  where $f_x$  and  $f_y$   are partial derivatives with respect
coordinates $x$ and $y$.  In the literature  frequently  notation $\partial f$ and  $\overline{\partial}f$  are used instead  $f_z$ and $f_{\overline{z}}$  respectively.

In this section let $\Omega$  denote   a planar domain.
\item     An equation
\begin{equation}\label{belt.eq}
\overline{\partial}f  =  \mu \, \partial f ,
\end{equation}
where  $\mu$  is a  complex valued measurable function defined a.e. on $\Omega$ and  $||\mu||_\infty  <  1$  is essential supremum with respect to $L^\infty$-norm,  is
called $\mu$-{\it Beltrami equation}  on $\Omega$.

\item   If a  homeomorphism  $f: \Omega \xrightarrow{onto}\Omega_*\subset \mathbb{C}$ satisfies
\begin{itemize}
\item[{\rm (i)}] $f$  is  $ACL$ on  $\Omega$, and
\item[{\rm (ii)}]   $ |f_{\overline{z}} | \leqslant
k\, |f_z|$  almost everywhere in  $\Omega$, where  $k =
\frac{K-1}{K +1}\in [0,1)$,\\
we say that $f$ is a  $K$- quasiconformal  (shortly qc); more precisely  $K$-qc in analytic sense.
\end{itemize}
\end{enumerate}
\end{definition}
The last item 3. of the  definition is equivalent to requirement that:\\
(A)  $f$ is  homeomorphism   and it has locally
integrable distributional derivatives which satisfy  (ii).

\medskip
\begin{theorem}{\bf(Existence theorem)}{\sl\,
Let $\mu$ be a measurable function in a domain $\Omega$  with  $
||\mu||_\infty < 1$.  Then there is a qc mapping of $\Omega$ whose
complex dilatation agrees with $\mu$ a.e.}
\end{theorem}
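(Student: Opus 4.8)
The plan is to reduce to the case $\Omega=\mathbb{C}$ and then carry out the Ahlfors--Bers construction. First I would extend $\mu$ to all of $\mathbb{C}$ by setting $\mu\equiv 0$ on $\mathbb{C}\setminus\Omega$, which keeps $\|\mu\|_\infty=k<1$. It then suffices to produce a quasiconformal homeomorphism $f:\mathbb{C}\xrightarrow{onto}\mathbb{C}$ with $\overline{\partial}f=\mu\,\partial f$ a.e.\ on $\mathbb{C}$, since its restriction to $\Omega$ is a qc map of $\Omega$ whose complex dilatation agrees with $\mu$ a.e.\ on $\Omega$. (If one prefers, one may first cut $\mu$ off outside a large disk containing a prescribed relatively compact piece of $\Omega$ and pass to an exhaustion, but this refinement is not needed for the bare existence statement.)

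For the global problem I would use the two classical singular integral operators of the plane: the Cauchy (Pompeiu) transform $P$, characterised by $\overline{\partial}(Pg)=g$, and the Beurling transform $T=\partial P$, which on $L^2(\mathbb{C})$ is the Fourier multiplier with symbol $\overline{\zeta}/\zeta$ and is therefore an isometry, and which is bounded on $L^p(\mathbb{C})$ for every $p\in(1,\infty)$ with $\|T\|_p\to 1$ as $p\to 2$ (Calder\'on--Zygmund theory). Looking for a solution in the normalised form $f(z)=z+Pg(z)$ forces $\overline{\partial}f=g$ and $\partial f=1+Tg$, so the Beltrami equation becomes the fixed point equation $g=\mu+\mu Tg$, i.e.\ $(I-\mu T)g=\mu$. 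Since $\mu$ has compact support and is bounded, multiplication by $\mu$ is bounded on every $L^p$, and $\|\mu T\|_{L^p\to L^p}\le k\,\|T\|_p$; choosing $p\ge 2$ close enough to $2$ that $k\,\|T\|_p<1$, the operator $I-\mu T$ is invertible on $L^p$ and the Neumann series $g=\sum_{m\ge 0}(\mu T)^m\mu$ converges in $L^p\cap L^2$. This yields $g\in L^p(\mathbb{C})$ with $p>2$, hence $Pg$ is continuous, H\"older of exponent $1-2/p$, and $Pg(z)\to 0$ as $z\to\infty$; thus $f=z+Pg$ is a continuous map with $f(z)-z$ bounded, $\overline{\partial}f=\mu\,\partial f$ a.e., and $f\in W^{1,p}_{\rm loc}\subset ACL$.

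It then remains to upgrade $f$ to a genuine quasiconformal homeomorphism. The key points are: (i) $f\in ACL$ with locally $L^p$ derivatives, so $f$ is differentiable a.e.; (ii) $J(z,f)=|\partial f|^2-|\overline{\partial}f|^2=|\partial f|^2(1-|\mu|^2)\ge 0$ a.e., and $\partial f\ne 0$ a.e.\ (this can be read off from the series representation, or from the fact that $\log\partial f$ lies in a suitable Sobolev space, hence $\partial f$ cannot vanish on a positive-measure set); (iii) $f$ is sense-preserving, discrete and open, and since $f(z)-z$ is bounded a degree/argument-principle computation shows $f$ is a proper map of $\mathbb{C}$ onto $\mathbb{C}$ of degree one, hence injective, hence a homeomorphism. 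Combined with (i)--(ii) and item~3 of the definition of quasiconformality above, $f$ is $K$-qc with $K=(1+k)/(1-k)$, and $\mu_f=\mu$ a.e. I expect the main obstacle to be exactly this last step: the functional-analytic fixed point argument only delivers $f$ a priori as an $L^p$-Sobolev map, and passing from that to injectivity and to the full $ACL$/a.e.-differentiability regularity requires the $L^p$-mapping theory of the Beurling transform together with a topological degree argument.

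An alternative, more elementary route worth noting is the approximation method: mollify $\mu$ to obtain smooth $\mu_\varepsilon$ with $\|\mu_\varepsilon\|_\infty\le k$, solve the Beltrami equation for $\mu_\varepsilon$ (classical in the smooth case, e.g.\ by applying the Riemann mapping theorem to the conformal structure $|dz+\mu_\varepsilon\,d\overline{z}|^2$), normalise the solutions $f_\varepsilon$ by fixing, say, the images of $0$ and $1$, and extract a locally uniformly convergent subsequence using the equicontinuity and normal-family properties of normalised $K$-qc maps; the limit $f$ is $K$-qc, and a lower-semicontinuity argument for the complex dilatation under locally uniform convergence gives $\mu_f=\mu$ a.e. I would present the functional-analytic construction as the main proof and mention the approximation argument as a remark.
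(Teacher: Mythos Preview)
The paper does not prove this statement; it is quoted as the classical Measurable Riemann Mapping Theorem (Morrey, Bojarski, Ahlfors--Bers) and used as background for Section~\ref{biLip}. So there is nothing in the paper to compare your argument against.

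That said, your outline is the standard Ahlfors--Bers construction and is correct in spirit. One point to tighten: after extending $\mu$ by zero to $\mathbb{C}\setminus\Omega$ you write ``since $\mu$ has compact support'', but this is only automatic when $\Omega$ is bounded. For unbounded $\Omega$ (in particular $\Omega=\mathbb{C}$) the right-hand side $\mu$ need not lie in $L^p(\mathbb{C})$, so the Neumann series $g=\sum_{m\ge 0}(\mu T)^m\mu$ is not immediately available. Your parenthetical remark about cutting off and exhausting is exactly the standard fix (alternatively one solves separately for $\mu\cdot\chi_{\mathbb{B}^2}$ and for the reflected coefficient outside and composes); I would promote that remark from a parenthesis to an actual step, since it \emph{is} needed for the existence statement in full generality. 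With that adjustment your proposal is a faithful rendering of the classical proof.
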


\medskip
In this setting we say that  $f$ is solution of  Beltrami equation
for $\mu$. The complex dilatation at $z_{0}$ is
\begin{equation}
\mu _{f} = \frac{f_{\overline{z}}}{f_{z}}.
\end{equation}

\medskip
Frequently  the notation  ${\rm Belt}(f)$   is also  used instead of  $\mu _{f}$.

\medskip
\begin{theorem}\label{Stoilowqc}{\sl\,
Let $f$ and $g$ be qc map of a domain $\Omega$ whose complex
dilatations agree a.e. in $\Omega$. Then $f\circ g^{-1}$ is a
conformal mapping.}
\end{theorem}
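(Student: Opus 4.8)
The plan is to set $h:=f\circ g^{-1}$, which is a homeomorphism of $g(\Omega)$ onto $f(\Omega)$, and to prove that $h$ is $1$-quasiconformal; since a $1$-quasiconformal homeomorphism is conformal (a Weyl-type lemma: an $ACL$ homeomorphism satisfying the Cauchy--Riemann equations a.e. is holomorphic, hence conformal), this yields the assertion.

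First I would note that $h$ is itself quasiconformal: the inverse of a qc map is qc, the composition of two qc maps is qc, and then all of $f$, $g$, $g^{-1}$, $h$ are $ACL$, differentiable a.e., with positive Jacobian a.e. (so $g_z\neq 0$ and $h_w\neq 0$ a.e.), and they satisfy Lusin's condition (N). Writing $w=g(z)$ and using $f=h\circ g$, the Wirtinger chain rule holds a.e. in $\Omega$:
$$f_z=(h_w\circ g)\,g_z+(h_{\bar w}\circ g)\,\overline{g_{\bar z}},\qquad f_{\bar z}=(h_w\circ g)\,g_{\bar z}+(h_{\bar w}\circ g)\,\overline{g_z}\,.$$
Dividing these identities by $(h_w\circ g)\,g_z$ and abbreviating $\mu_f=f_{\bar z}/f_z$, $\mu_g=g_{\bar z}/g_z$, $\mu_h=h_{\bar w}/h_w$, and $\tau=\overline{g_z}/g_z$ (so $|\tau|=1$), I obtain the composition formula for complex dilatations,
$$\mu_f=\frac{\mu_g+\tau\,(\mu_h\circ g)}{1+\tau\,\overline{\mu_g}\,(\mu_h\circ g)}\qquad\text{a.e. in }\Omega\,.$$

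Now I would feed in the hypothesis $\mu_f=\mu_g$ a.e. Clearing denominators and cancelling $\tau$, the identity collapses to $(\mu_h\circ g)\,(|\mu_g|^2-1)=0$ a.e.; since $\|\mu_g\|_\infty<1$, the factor $|\mu_g|^2-1$ is bounded away from $0$, so $\mu_h\circ g=0$ a.e. in $\Omega$. Because $g$ satisfies condition (N) and is a bijection onto $g(\Omega)$, the set where $\mu_h\neq 0$ is the $g$-image of a null set, hence null; thus $\mu_h=0$ a.e. in $g(\Omega)$, i.e. $h_{\bar w}=0$ a.e. Therefore $h$ is a quasiconformal homeomorphism with vanishing complex dilatation, that is, $1$-quasiconformal, and by the quoted Weyl-type lemma $h$ is holomorphic; being a homeomorphism, it is conformal.

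The main obstacle is the a.e. bookkeeping behind the chain-rule step: one must invoke that qc maps and their inverses are differentiable a.e. with nonvanishing Jacobian and satisfy Lusin's condition (N), so that the Wirtinger identities above are valid a.e. and so that the implication ``$\mu_h\circ g=0$ a.e. in $\Omega\ \Rightarrow\ \mu_h=0$ a.e. in $g(\Omega)$'' is legitimate. Once these standard facts of planar qc theory are in place, the remainder is pure algebra together with the classical $1$-qc $\Rightarrow$ conformal theorem.
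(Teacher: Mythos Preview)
The paper does not prove this theorem; it is stated in Section~\ref{biLip} as a standard background fact from planar quasiconformal theory (immediately after the Existence theorem, both without proof), and is then used to analyse examples. Your argument is the standard textbook proof and is correct: the composition formula for Beltrami coefficients, the algebraic reduction $(\mu_h\circ g)(|\mu_g|^2-1)=0$, the transfer via Lusin's condition (N), and the Weyl-type lemma ($1$-qc $\Rightarrow$ conformal) are all in order. The only small wording point is that in your last paragraph you need condition (N) for $g$ (so that $g$ maps the null set $\{z:\mu_h(g(z))\neq 0\}$ to a null set), which is exactly what you invoke in the body of the proof; the phrasing in the final paragraph is consistent with this.
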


\medskip
In order to get a  feeling  of the subject  we  first consider some examples.

Let  $f_0$   be a branch  of  $\sqrt{z}$. Then  $\mu_{f_0}=0$   and  $f_0$  has  a singularity at $0$ and $\mu_g=0$  a.e. on $\mathbb{C}$. Next, by Theorem \ref{Stoilowqc},
$g$ is a M\"{o}bius transformation $A= f_0\circ g^{-1}$ is a  conformal mapping. Thus  $f_0=A\circ g$ is  a  conformal mapping. More precisely  if $G$ is a simple connected domain
which does not contain $0$,
$A\circ g$  is  a  conformal mapping on $G$.

The following known  example shows that  qc with continuous Beltrami   coefficient are not  $C^1$  in general.

\begin{example}\cite{AIM, BGR}\label{Ex1}
Consider  $f(z)=-z \ln|z|^2$   for   $|z|\leqslant r_0=e^{-2}$. Then
$f: B(0,r_0) \rightarrow B(0,4r_0)$  and

$$f_{\bar z}=-\frac{z}{\overline{z}},     f_z=   -1 - \log|z|^2, \mu_f=  \frac{z}{\bar z (1 + \log|z|^2)}  .$$

Hence  it  is qc with continuous Beltrami   coefficient, and $f_{\bar z}$  and  $f_z$ are discontinuous at $0$, and therefore
yet  $f$ is not $C^1$.
\end{example}

Note that    in planar case  we frequently use notation   $\mathbb{S}$    instead  of   $\mathbb{S}^1$  and     $\mathbb{U}$    instead  of   $\mathbb{B}^2$.

We can modify this example to show that there is  $f\in QC(\mathbb{U})$  such that  $\mu_f$  is continuous on $\mathbb{U}$, but  $f_{\bar z}$  and  $f_z$ are discontinuous at
some point $z_0\in \mathbb{U}$.
We will show that if in addition the second dilatation  $\nu_f$  is anti holomorphic on $\mathbb{U}$  and
$f$  is  $C^{1}$  up to the boundary, then $f$  is biLipschitz   and $\mu_f$   continuous  up to the boundary.
\begin{example}\label{Ex2}
Consider   $f_0(z)= \frac{z}{\log|z|^2}$. We check that  $(\log|z|^2)_z=1/z$,  $p= \frac{1}{\log|z|^2}- \frac{z}{(\log|z|^2)^2} /z = \frac{1}{\log|z|^2} A(z) $, where  $A(z)= 1-\frac{1}{\log|z|^2}$.  Next $q= -\frac{z}{(\log|z|^2)^2} 1/\overline{z}=- \frac{z}{\overline{z}}\frac{1}{(\log|z|^2)^2}$    and therefore  $\mu_{f_0}=-\frac{z}{\overline{z}}\frac{1}{\log|z|^2} B(z)$, where $B=1/A$.
For $r_0$  small enough  $f_0$ is qc on $B(0,r_0)$,  $p(0)=q(0)=\mu_f(0)=0$, $\mu_{f_0}$ and $\nu_{f_0}$ are   continuous   $f_0$ is $C^1$, but there is no finite $(f_0^{-1})_x $ at $0$.  Next if   $B=B(z_0,s_0)$  is an arbitrary planar disk
using  the mapping  $f(z)= f_0(\lambda (z-z_0))$  with $\lambda s_0=r_0 $,  we conclude
that
there is  a qc $C^1$ map $f$  on $B$ such that   $\mu_f$ and $\nu_f$ are   continuous on $B$, but $f^{-1}$  has no  finite derivatives at  $w_0=f(z_0)$.
But note that  if  both  $\mu_f$ and $\nu_f$ are    $\alpha$ H\"{o}lder continuous on $B$, then  $f$ and  $f^{-1}$ are  $C^{1,\alpha}$.
\end{example}

The following example shows that   the Beltrami coefficient $\mu_f$  of a qc $f$ is uniformly  $\alpha$ -H\"{o}lder   (and therefore $f$ is  $C^{1,\alpha}$ up to the boundary) but  it does not imply  in general  that $f^{-1}$    has  continuous extension.
\begin{example}\label{Ex3} Let  $0<\alpha< \beta<1$, $\gamma=\beta-\alpha$  and   $0<k<1$.
Solve equation  $f_z=(1-z)^\alpha$  and  $f_{\overline{z}}=k (1-\overline{z})^\alpha$.  Then  check that   (i):  $f$ is  $C^{1,\alpha}$  up to the boundary of the unit disk, but    $\mu_f$  is discontinuous at $1$  if $\alpha= \beta$  and $\mu_f$  is  $\gamma$ -H\"{o}lder  on  unit disk $\overline{\mathbb{U}}$  if $\alpha< \beta$.
We can write   $f=g\circ T$, where
$T(z)=1-z$,   and   $g(w)= \frac{w^{\alpha+1}}{\alpha+1} + k\frac{\overline{w}^{\beta+1}}{\beta+1}$.

Now consider $g$ on  $B=B(0,1)$. We are going to show  that $g$ has corresponding properties from which (i) follows.
Check that  $\mu_g=k\frac{\overline{w}^{\beta}}{w^{\alpha}}$ is
$\gamma$ -H\"{o}lder and   $|\mu_g|\leqslant k$  on the closed disk
$\overline{B}$.
Let  $\phi$  be  conformal mapping of  $G=g(B)$  onto  $B$  with  $\phi(0)=0$
and set    $h=\phi \circ g$.
Thus     $h$ is a qc mapping  which maps $B$ onto itself   such that  $\mu_h=\mu_g$  and $h(0)=0$.
In addition,   $\mu_h=\mu_g$  is  $\gamma$ -H\"{o}lder  on $\overline{B}$, but  (ii): partial derivatives  of $g^{-1}$     do not have continuous extension to $0$.
At this  point  it seems  natural    to check whether  partial derivatives of  $h^{-1}$  have  continuous extension to $h(0)=0$; we leave it  to the reader and note  that Theorem \ref{ThKal} below  shows that it is  the case.\\
Warning:   Note  here  that   $h(B)=B$  is smooth domain  and
$G=g(B)$  is not smooth (precisely  only  at a point $0$). Therefore  there is an essential difference between $g$ and $h$:  $g$  does not satisfy  the hypotheses of  Theorem \ref{ThKal} below  and $h$ does it.
\end{example}

Note further  that hypothesis  that  $\mu$ is a compactly supported
function in H\"{o}lder spaces completely  changes the situation.
Namely,  there is  a classical result that goes back to Schauder
which  asserts that  $f$   is of class $C^{1,\epsilon}$ provided $\mu$ is a compactly
supported function in  ${\rm Lip}_\varepsilon(\mathbb{C},\mathbb{C})$, stated here as (see, for example,  Theorem  2.10  and 2.12,   Ch II,  \$ 5,  p.93 in Vekua's book \cite{Vekua} and
\cite[Chapter~15]{AIM}):

{\bf Theorem S}.    If  $\mu$ is a complex valued compactly
supported  $\epsilon$- H\"{o}lder  continuous  function  on $\mathbb{C}$, $0<\epsilon<1$,  with  $|\mu|_\infty <1 $,   than   principal solution $f$  of $\mu$-Beltrami equation  is of class $C^{1,\epsilon}$.

In order  to discuss  some version  of  Kellogg   and  Warschawski theorem  for a class of quasiconformal maps  we first need some definition and results.

Recall  in the literature  planar domain $D$ is  called  Lyapunov  domain  if $D$   has  smooth $C^{1,\alpha}$- boundary  for some $0<\alpha<1$. We first  recall  the  classical   result   of   Kellogg   and  Warschawski  related to Riemann conformal mapping.



{\bf Kellogg's theorem}.   Let $\gamma$ be a Jordan curve. By the
Riemann mapping theorem there exists a Riemann conformal mapping of
the unit disk onto the Jordan domain $G = {\rm int} \gamma$. By
Caratheodory's theorem it has a continuous extension to the
boundary. Moreover, if $\gamma \in C^{n,\alpha}$, $n\in
\mathbb{N},\,\, 0\leqslant \alpha < 1$, then the Riemann conformal
mapping has a $C^{n,\alpha}$  extension to the boundary (this result is known as Kellogg's theorem).

In \cite{Ka$_2$}   Kalaj     gives   some   extensions   of   classical   results   of   Kellogg   and  Warschawski to a class of quasiconformal (q.c.) mappings. Among the other results the author  states   the following:
\begin{theorem}\label{ThKal}  Suppose that  ($H_1$):   $f$ is   a q.c. mapping  between two planar domains  $G$ and $G'$  with smooth $C^{1,\alpha}$ boundaries.
Then the following conditions are
equivalent:

{\rm (A)}  $f$  together with its inverse mapping $f^{-1}$,  is  $C^{1,\alpha}$  up to the boundary.

{\rm (B)}  the Beltrami coefficient $\mu_f$  is uniformly  $\alpha$ H\"{o}lder continuous {\rm ($0<\alpha <1$)}.
\end{theorem}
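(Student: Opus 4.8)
\medskip
The plan is to reduce everything to the case $G=G'=\mathbb{U}$ by pre- and post-composing $f$ with Riemann maps (whose boundary regularity is governed by Kellogg's theorem), to settle the easy implication $(A)\Rightarrow(B)$ by a direct Jacobian estimate, and to obtain $(B)\Rightarrow(A)$ by extending the Beltrami coefficient to a compactly supported H\"older function on $\mathbb{C}$, solving the corresponding Beltrami equation by \textbf{Theorem S}, and comparing $f$ with that principal solution through Theorem~\ref{Stoilowqc}.

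I would first dispose of $(A)\Rightarrow(B)$. If $f\in C^{1,\alpha}(\overline G)$ and $f^{-1}\in C^{1,\alpha}(\overline{G'})$, then differentiating $f^{-1}\circ f={\rm id}$ shows that $Df(z)$ is invertible for every $z\in\overline G$; hence $J(z,f)=|f_z|^2-|f_{\overline z}|^2$ is continuous, nonnegative (since $f$ is sense-preserving) and nowhere zero, so it is bounded below by some $c_0>0$ on the compact set $\overline G$. Consequently $|f_z|^2\geqslant c_0$ throughout $\overline G$, and $\mu_f=f_{\overline z}/f_z$ is a quotient of uniformly $\alpha$-H\"older functions whose denominator is bounded away from $0$, hence itself uniformly $\alpha$-H\"older. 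Note that smoothness of the boundary plays no role in this direction.

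For $(B)\Rightarrow(A)$, pick Riemann maps $\Phi_G:G\to\mathbb{U}$ and $\Phi_{G'}:G'\to\mathbb{U}$. Since $\partial G,\partial G'\in C^{1,\alpha}$, Kellogg's theorem gives that $\Phi_G,\Phi_{G'}$ and their inverses are $C^{1,\alpha}$ up to the boundary with nowhere-vanishing derivative on $\overline{\mathbb{U}}$. Then $\tilde f:=\Phi_{G'}\circ f\circ\Phi_G^{-1}$ is a quasiconformal self-homeomorphism of $\overline{\mathbb{U}}$ with $\mu_{\tilde f}=(\mu_f\circ\Phi_G^{-1})\cdot\overline{(\Phi_G^{-1})'}/(\Phi_G^{-1})'$, which is again uniformly $\alpha$-H\"older on $\overline{\mathbb{U}}$; moreover $f,f^{-1}$ are $C^{1,\alpha}$ up to the boundary if and only if $\tilde f,\tilde f^{-1}$ are. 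So assume $G=G'=\mathbb{U}$ and set $\mu:=\mu_f\in C^{0,\alpha}(\overline{\mathbb{U}})$, $\|\mu\|_\infty=:k<1$. Extend $\mu$ to a compactly supported $\tilde\mu\in{\rm Lip}_\alpha(\mathbb{C},\mathbb{C})$ with $\|\tilde\mu\|_\infty<1$ (componentwise McShane extension, multiplication by a smooth cut-off that equals $1$ near $\overline{\mathbb{U}}$, and — if necessary — composition with the $1$-Lipschitz radial projection onto a disc $\overline{B(0,k')}$, $k<k'<1$, which fixes $\mu$ on $\overline{\mathbb{U}}$). By \textbf{Theorem S}, the principal solution $g$ of the $\tilde\mu$-Beltrami equation belongs to $C^{1,\alpha}(\mathbb{C})$; since it is a global homeomorphism and, as is classical for principal solutions, $g_z$ is nowhere zero, the Jacobian $J(\cdot,g)=|g_z|^2(1-|\tilde\mu|^2)$ is continuous and positive, so $g|_{\overline{\mathbb{U}}}$ is a $C^{1,\alpha}$ diffeomorphism onto a Jordan domain $g(\mathbb{U})$ with $C^{1,\alpha}$ boundary, and $g^{-1}$ is $C^{1,\alpha}$ up to $\partial g(\mathbb{U})$.

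Let $\Phi:g(\mathbb{U})\to\mathbb{U}$ be a Riemann map; Kellogg's theorem gives $\Phi,\Phi^{-1}\in C^{1,\alpha}$ up to the boundary. The maps $\Phi\circ g$ and $f$ are quasiconformal self-maps of $\mathbb{U}$ with the same complex dilatation $\mu$ on $\mathbb{U}$ (post-composing $g$ with the conformal $\Phi$ does not alter it), so by Theorem~\ref{Stoilowqc} the map $A:=f\circ(\Phi\circ g)^{-1}$ is a conformal automorphism of $\mathbb{U}$, i.e.\ a M\"obius transformation, and hence real-analytic on $\overline{\mathbb{U}}$. Therefore $f=A\circ\Phi\circ g$ and $f^{-1}=g^{-1}\circ\Phi^{-1}\circ A^{-1}$ are compositions of maps that are $C^{1,\alpha}$ up to the boundary, which gives $(A)$. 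The main obstacle is the implication $(B)\Rightarrow(A)$; inside it the two delicate points are the H\"older-preserving, norm-controlled extension of $\mu$ to all of $\mathbb{C}$ and the nonvanishing of $g_z$ on $\overline{\mathbb{U}}$ (which is what allows $g(\mathbb{U})$ to be recognised as a $C^{1,\alpha}$ Jordan domain and the inverse function theorem to be used in the H\"older category, and which rests on the fine structure of the principal solution behind \textbf{Theorem S}). Everything else is bookkeeping of compositions of $C^{1,\alpha}$ maps together with one application each of Kellogg's theorem, \textbf{Theorem S}, and Theorem~\ref{Stoilowqc}.
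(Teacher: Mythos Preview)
Your argument is essentially correct and follows the same skeleton as the paper's proof of the extended Theorem~\ref{ThMSS}: reduce to the disk via Kellogg, extend $\mu$ to a compactly supported H\"older coefficient, invoke Theorem~S for $C^{1,\alpha}$-regularity of the principal solution, and factor $f$ through that solution and a conformal map. The direction $(A)\Rightarrow(B)$ is handled identically.

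There are two differences worth flagging. First, your extension of $\mu$ (McShane plus cutoff plus radial projection) is perfectly fine, but the paper instead reflects across the unit circle, $\mu(z):=\mu_f(1/\bar z)$, before cutting off; this automatically keeps $\|\mu\|_\infty$ unchanged and avoids the projection step. Second, and more importantly, the crux of $(B)\Rightarrow(A)$ is precisely what you call the ``delicate point'': that $g_z$ is nowhere zero on $\overline{\mathbb{U}}$. You write this off as ``classical for principal solutions,'' but Theorem~S as stated gives only $g\in C^{1,\alpha}$, not $g_z\ne0$; a continuous function nonzero a.e.\ can still vanish somewhere. The paper deals with this by invoking Theorem~MOV (Mateu--Orobitg--Verdera), which gives that the principal solution is bi-Lipschitz on $\mathbb{C}$, hence $|g_z|-|g_{\bar z}|$ is bounded below on $\overline{\mathbb{U}}$. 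Your route can be made rigorous more cheaply via Theorem~\ref{ThReg} (Lehto--Virtanen, Theorem~7.1, p.~232): H\"older continuity of $\mu$ at a point implies the integral growth condition~(\ref{regCon}) there, and hence regularity ($J_g\ne0$) at every point of $\mathbb{C}$; the paper notes this alternative in Remark~\ref{rm1}. Either way, you should cite one of these results explicitly rather than leave the nonvanishing of $g_z$ as an unreferenced assertion.

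One further structural difference: the paper first shows $(B)\Rightarrow(C)$ (the factorisation $f=\phi\circ f_0$ with $f_0$ bi-Lipschitz and $\phi$ conformal between Lyapunov domains), then deduces that $f$ is bi-Lipschitz, and from this concludes that $\nu_f=\mu_f\,\overline{f_z}/f_z$ and hence $\mu_{f^{-1}}=-\nu_f\circ f^{-1}$ are H\"older up to the boundary, which yields $(A)$ for $f^{-1}$ via a second application of the forward argument. You instead obtain $C^{1,\alpha}$-regularity of $f^{-1}$ directly from the inverse function theorem applied to the composite $A\circ\Phi\circ g$. Both routes work once $g_z\ne0$ is secured; yours is slightly shorter, the paper's makes the bi-Lipschitz conclusion (and the role of the second dilatation $\nu_f$) more visible.
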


It our impression   that this result can be related  with some resent results.  For example,   we derive  a  small extension of this result (see Theorem \ref{ThMSS} below)  and  we also can get from this result
Theorem 1.3  \cite{APE}.


\medskip

It is also interesting that Theorem  \ref{ThKal} is related  to   a result  of
Mateu, Orobitg and Verdera  (Theorem MOV  below) proved  in
\cite{MOV}:

{\bf Theorem MOV}.
{\it Principal solution of Beltrami equation with  H\"{o}lder continuous Beltrami  coefficient supported on a  Lyapunov  domain is bi-Lipschitz.}

Using  this  result    we can    fill  a small gap  in  original  proof in~\cite{Ka$_2$}  and  prove   (A) iff (B).
Now we are ready  to prove  the following:
\begin{theorem}\label{ThMSS}{\sl\,
Let  $f$ be   a q.c. mapping  between two planar domains  $G$ and
$G'$  with smooth $C^{1,\alpha}$, $0<\alpha <1$, boundaries. Then
the following conditions are equivalent:

{\rm (A)}  $f$  together with its inverse mapping $f^{-1}$,  is  $C^{1,\alpha}$  up to the boundary.

{\rm (B)}  the Beltrami coefficient $\mu_f$  is uniformly  $\alpha$ H\"{o}lder continuous {\rm ($0<\alpha <1$)}.

{\rm (C)}    $f=\phi\circ f_0$,  where   $\phi$ is conformal mapping
from  Lyapunov $f_0(G)$  onto $G'$  and $f_0$  is bi-Lipschitz.}
\end{theorem}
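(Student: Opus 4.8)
The plan is to prove the cycle $(B)\Rightarrow(C)\Rightarrow(A)$ and to obtain $(A)\Leftrightarrow(B)$ directly from Theorem~\ref{ThKal}; in the latter equivalence the only delicate point is the gap in the original argument of \cite{Ka$_2$}, which is repaired by Theorem MOV exactly in the way indicated above. So the genuinely new content is the factorization $(B)\Rightarrow(C)$ together with the return trip $(C)\Rightarrow(A)$, and I describe these two steps.

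For $(B)\Rightarrow(C)$: assume $\mu_f$ is uniformly $\alpha$-H\"older on $G$ with $\|\mu_f\|_\infty=k<1$. First I would enlarge $G$ slightly to a bounded Lyapunov domain $G_1$ with $\overline G\subset G_1$, extend $\mu_f$ to an $\alpha$-H\"older function on $\overline{G_1}$ by a linear (Whitney-type) H\"older extension, and multiply by a $C^\infty$ cut-off $\chi$ with $0\leqslant\chi\leqslant 1$ that equals $1$ on $\overline G$ and is supported in $G_1$. Since $\chi$ is Lipschitz and bounded by $1$, the resulting $\mu$ is $\alpha$-H\"older on all of ${\Bbb C}$, supported in the Lyapunov domain $G_1$, satisfies $\|\mu\|_\infty\leqslant k<1$, and coincides with $\mu_f$ on $G$. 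Let $f_0$ be the principal solution of the $\mu$-Beltrami equation. By Theorem~S, $f_0$ is of class $C^{1,\alpha}$, and, being quasiconformal and $C^1$, it has nowhere vanishing Jacobian, hence is a global $C^{1,\alpha}$-diffeomorphism; by Theorem MOV it is moreover bi-Lipschitz. Consequently $f_0$ carries the $C^{1,\alpha}$ boundary charts of $\partial G$ onto $C^{1,\alpha}$ boundary charts, so $f_0(G)$ is again a Lyapunov domain. Finally, since $\mu_f=\mu_{f_0}$ a.e.\ in $G$, Theorem~\ref{Stoilowqc} shows that $\phi:=f\circ f_0^{-1}$ is conformal on $f_0(G)$, and $\phi$ maps $f_0(G)$ onto $G'$; this is precisely $(C)$.

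For $(C)\Rightarrow(A)$: with $f=\phi\circ f_0$ as in $(C)$, both $f_0(G)$ and $G'$ have $C^{1,\alpha}$ boundary, so by the Kellogg--Warschawski theorem the conformal map $\phi$ and its inverse extend $C^{1,\alpha}$ up to the closure with nowhere vanishing derivative. Since $f_0$, being the principal solution of a compactly supported H\"older Beltrami equation, is $C^{1,\alpha}$ up to the boundary by Theorem~S (and so is $f_0^{-1}$), the compositions $f=\phi\circ f_0$ and $f^{-1}=f_0^{-1}\circ\phi^{-1}$ are $C^{1,\alpha}$ up to the boundary, which is $(A)$. One may equivalently read off $(B)$ at once, because $\mu_f=\mu_{f_0}$ is the restriction to $G$ of the globally $\alpha$-H\"older function $\mu$.

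The main obstacle I expect is precisely the extension/cut-off construction together with the verification that $f_0$ takes the Lyapunov domain $G$ onto a Lyapunov domain: one must preserve $\|\mu\|_\infty<1$ while simultaneously arranging $\alpha$-H\"older regularity and compact support inside a Lyapunov domain, and then one needs the nonvanishing of the Jacobian of the $C^{1,\alpha}$ principal solution, so that a $C^{1,\alpha}$ boundary chart of $\partial G$ is pushed forward to a $C^{1,\alpha}$ boundary chart of $\partial f_0(G)$. Once these points are settled, the remainder is bookkeeping with Theorems~S, MOV, \ref{Stoilowqc} and Kellogg--Warschawski.
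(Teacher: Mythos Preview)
Your proposal is correct and follows essentially the same route as the paper: establish $(B)\Rightarrow(C)$ by extending $\mu_f$ to a compactly supported H\"older coefficient, taking the principal solution $f_0$, and invoking Theorem~S together with Theorem~MOV; then pass to $(A)$ via the factorization $f=\phi\circ f_0$ and Kellogg--Warschawski, with $(A)\Leftrightarrow(B)$ coming from Theorem~\ref{ThKal}.

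Two minor differences are worth recording. First, instead of a Whitney extension (where, as you correctly flag, keeping $\|\mu\|_\infty<1$ needs a separate argument), the paper first uses Kellogg's theorem to reduce to $G=\mathbb{B}^2$ and then extends $\mu_f$ across $\mathbb{S}^1$ by the reflection $\mu(z)=\mu_f(1/\bar z)$ before cutting off; this makes the bound $|\mu|\leqslant k$ automatic. Second, for the passage from $(C)$ to $(A)$ the paper does not argue directly that $f_0^{-1}$ is $C^{1,\alpha}$. It instead shows that $f$ is bi-Lipschitz and that $p:=f_z$ is $\alpha$-H\"older with $|p|\geqslant m_0>0$, deduces that the second dilatation $\nu_f=\mu_f\,p/\overline{p}$ is $\alpha$-H\"older, and then uses $\mu_{f^{-1}}=-\nu_f\circ f^{-1}$ together with the bi-Lipschitz property of $f$ to conclude that $\mu_{f^{-1}}$ is $\alpha$-H\"older, whence $(A)$. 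Your direct route via $f^{-1}=f_0^{-1}\circ\phi^{-1}$ is a legitimate shortcut, once you note that a $C^{1,\alpha}$ diffeomorphism whose Jacobian is bounded away from zero has a $C^{1,\alpha}$ inverse.
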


\medskip
\begin{proof} Suppose (B). We  first  prove that (B) implies (C).  Using  Kellogg's theorem
without loss  of  generality  we can reduce the proof to  the case
$G=\mathbb{B}^2$. In this setting for given  $r_0>1$   there is
H\"{o}lder continuous  $\mu_0$ supported on $B=B(0,r_0)$, $r_0>1$,
such that   $\mu_0=\mu_f$ on  $\mathbb{B}^2$. Namely extend
$\mu_f$ to  $\mu$   by reflection $\mu(z)= \mu_f(Jz)$, where
$Jz=1/\overline{z}$. Next let  $\varphi \in C^2_0(B)$,  $\varphi =1$
on $\overline{{\Bbb B}^{2}}$  and  set  $\mu_0=\varphi \mu$.
If  $f_0$ is   principal solution of $\mu_0$-Beltrami equation, then
we have  $f=\phi\circ f_0$, where   $\phi$ is conformal mapping from
$G_0:=f_0(\mathbb{B}^2)$  onto $G'$. Since  $\mu_0$  is H\"{o}lder
continuous on $\overline{{\Bbb B}^{2}}$, by  Theorem S we conclude
that  $p_0:=(f_0)_z$  and   $q_0:=(f_0)_{\overline{z}}$ are
H\"{o}lder continuous on $\overline{{\Bbb B}^{2}}$  and by Theorem
MOV  that  $f_0$  is bi-Lipschitz.   Hence  there are $0<l_0 <L_0$
such that  $l_0 \leqslant  |p_0|- |q_0| $   and  $|p_0|+ |q_0|
\leqslant L_0$ on $\overline{{\Bbb B}^{2}}$. Next  by abusing of
notation write $f_0'(t)$ instead of  $(f_0)_{b}'(t)$,  where
$(f_0)_{b}(t)=f_0(e^{it})$, $0\leqslant t \leqslant  2 \pi$. If
$\gamma (t)=f_0(e^{it})$, $0\leqslant t \leqslant  2 \pi$ and $s$ an
arc length parametar  on $\gamma$, then
$\gamma'(s)=f_0'(t)/|f_0'(t)|$, where $f_0'(t)=(f_0)_{b}'(t)$. Since
$f_0'(t)= i( p_0 e^{it}- q_0 e^{-it}) $,   $l_0 \leqslant |f_0'(t)|$
and therefore  $\gamma'(s)$ is H\"{o}lder continuous  on $[0,s_0]$,
where  $s_0$   is length of curve $\gamma$. Therefore  we have
proved (C).

Now we prove  that  (C) implies (A). By  Kellogg's theorem $\phi$
and $\phi^{-1}$ have a continuous extension to  $\overline{G_0}$ and
$\overline{G'}$ respectively and therefore  $\phi$   is
bi-Lipschitz. Hence  from (C) it follows that  $f$ is  bi-Lipschitz.

Recall that we suppose that $G=\mathbb{B}^2$.
Next $p:=f_z$  is   $\alpha$- H\"{o}lder  on $\overline{{\Bbb B}^{2}}$  and  there is $m_0 >0$
such that  $|p|\geqslant m_0 $  on $\overline{{\Bbb B}^{2}}$   and   since
$$1/p(z_1) -  1/p(z_2)
= \frac{p(z_2)- p(z_1)}{p(z_1)p(z_2)},$$ we get
$$|1/p(z_1) -
1/p(z_2)|\leqslant C |z_2-z_1|^\alpha/m^2_0$$  and therefore  $1/p$
is $\alpha$- H\"{o}lder  on $\overline{{\Bbb B}^{2}}$. Hence also the second dilatation
$\nu_f=\mu_f \frac{p}{\overline{p}}$  is  $\alpha$- H\"{o}lder  up
to the boundary  of  $G=\mathbb{B}^2$.
Finally, since  $f$ is bi-Lipschitz   and
$\mu_{f^{-1}}= -\nu_f \circ f^{-1}$, we conclude that  $\mu_{f^{-1}}$ is    $\alpha$- H\"{o}lder  up to the boundary.  Hence (A) follows.
\end{proof}~$\Box$
For further  discussion we first need  the following
definition: if $f:G\rightarrow G'$  is differentiable  at  $z_0$
and  $J_f(z_0) \neq 0$  we say  that $f$  is regular at $z_0$.
\begin{remark}\label{rm1}

After writing final version of this manuscript,  Kalaj    turned our attention on the following results.
\begin{theorem}  [Theorem 7.1 \cite{LV-73}, p. 232]\label{ThReg}
Let $G$ and $G'$ be domains in $\mathbb{C}$  and  $w:G\rightarrow G'$  a qc  with  complex dilatation  $\chi$, where  $|\chi(z)|\leq k< 1 $  a.e. in $G$. If there is $\chi_0$
such  that
\be \label{regCon}
\int\limits_{B(z_0,r_0)} \frac{|\chi(z)- \chi_0|}{|z-z_0|^2} dA< + \infty
\ee
for some $r_0 >0$, then  $w$ is regular at $z_0$  and $\chi(z_0)=\chi_0$.
\end{theorem}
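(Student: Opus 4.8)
The plan is to reduce the assertion to the Teichm\"uller--Wittich--Belinskii theorem (conformality of a quasiconformal map at a point of ``negligible'' dilatation) and then sketch the modulus argument behind it. First I would normalise: post-composing $w$ with the translation $\zeta\mapsto\zeta-w(z_0)$ and pre-composing with $z\mapsto z+z_0$, we may assume $z_0=0$ and $w(0)=0$; since $w$ is injective this forces $w(z)\neq0$ on a punctured disc $0<|z|<r_1$. Next let $L(\zeta)=\zeta+\chi_0\bar\zeta$, an $\mathbb R$-linear orientation-preserving homeomorphism (as $|\chi_0|\le k<1$) with constant complex dilatation $\chi_0$, and set $\widetilde w=w\circ L^{-1}$. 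The composition rule for complex dilatations gives
\[
\mu_{\widetilde w}(L(z))=\frac{\chi(z)-\chi_0}{1-\overline{\chi_0}\,\chi(z)}\,,
\]
so $|\mu_{\widetilde w}(L(z))|$ is comparable to $|\chi(z)-\chi_0|$ (the denominator lies between $1-|\chi_0|$ and $1+|\chi_0|$), while $L$ is bi-Lipschitz with constant Jacobian $1-|\chi_0|^2$; hence $\int_{L(B(0,r_0))}|\mu_{\widetilde w}(\zeta)|\,|\zeta|^{-2}\,dA(\zeta)<\infty$. Moreover $L$ is a linear diffeomorphism, so $\widetilde w$ is regular at $0$ iff $w$ is, and when both are differentiable at $0$ their classical pointwise complex dilatations are related by the same M\"obius formula; thus $\mu_{\widetilde w}(0)=0$ iff $w_{\bar z}(z_0)/w_z(z_0)=\chi_0$. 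It therefore suffices to prove: \emph{if $g$ is quasiconformal near $0$, $g(0)=0$, and $\int_{B(0,r_1)}|\mu_g(z)|\,|z|^{-2}\,dA(z)<\infty$, then $\lim_{z\to0}g(z)/z$ exists, finite and nonzero} --- i.e.\ $g$ is conformal (regular with dilatation $0$) at $0$.

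For this core statement I would work with moduli of ring domains. Writing $A(\rho_1,\rho_2)=\{\rho_1<|z|<\rho_2\}$, the key ingredient is the length--area (Gr\"otzsch-type) distortion estimate
\[
\Bigl|\operatorname{mod}g(A(\rho_1,\rho_2))-\tfrac1{2\pi}\log(\rho_2/\rho_1)\Bigr|\le C(k)\!\!\int_{A(\rho_1,\rho_2)}\!\!\frac{|\mu_g(z)|}{|z|^2}\,dA(z)=:\varepsilon(\rho_1,\rho_2)\,,\qquad 0<\rho_1<\rho_2\le r_1 .
\]
By absolute continuity of the integral, $\varepsilon(\rho_1,\rho_2)\to0$ as $\rho_2\to0$. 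On the other hand, with $m(\rho)=\min_{|z|=\rho}|g(z)|$ and $M(\rho)=\max_{|z|=\rho}|g(z)|$, the ring $g(A(\rho_1,\rho_2))$ separates the continuum $g(\{|z|=\rho_1\})$ --- which meets both circles $|\zeta|=m(\rho_1)$ and $|\zeta|=M(\rho_1)$ --- from $g(\{|z|=\rho_2\})$, so Teichm\"uller's and Gr\"otzsch's monotonicity bounds sandwich $\operatorname{mod}g(A(\rho_1,\rho_2))$ between expressions in $\log(m(\rho_2)/M(\rho_1))$ and $\log(M(\rho_2)/m(\rho_1))$. Combining this with the displayed estimate and letting first $\rho_1\to0$ with $\rho_2$ fixed small, then $\rho_2\to0$, yields $\log(M(\rho)/m(\rho))\to0$ (images of small circles are asymptotically round) and that $m(\rho)/\rho$ and $M(\rho)/\rho$ tend to a common finite limit $c>0$.

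It remains to upgrade $|g(z)|/|z|\to c$ to $g(z)/z\to c$, i.e.\ to control the argument. Here I would take the single-valued branch $\sigma(z)=\log(g(z)/z)$ on $0<|z|<r_1$, which exists because $g$, being an orientation-preserving homeomorphism fixing $0$, makes $g(\{|z|=\rho\})$ wind once around $0$, so $g(z)/z$ has trivial monodromy about the puncture. From $g_{\bar z}=\mu_g g_z$ one computes $\sigma_{\bar z}=\mu_g(\sigma_z+1/z)$, an inhomogeneous Beltrami equation; solving it via the Cauchy transform and the Beurling--Ahlfors operator, and using the already established boundedness of $\operatorname{Re}\sigma$ (equivalently, running the modulus argument on eccentric rings bounded by a small circle and a circle through a fixed point), one gets that $\sigma$ extends continuously to $0$. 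Hence $g(z)=z\,e^{\sigma(0)}(1+o(1))$, so $g$ is regular at $0$ with complex dilatation $0$; undoing the affine normalisation gives that $w$ is regular at $z_0$ with $w_{\bar z}(z_0)/w_z(z_0)=\chi_0$, as claimed.

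The main obstacle is the quantitative modulus-distortion inequality in the second paragraph: that the conformal modulus of a quasiconformal image of an annulus deviates from the Euclidean one by at most $C(k)\int|\mu_g|\,|z|^{-2}\,dA$. This is the substantive content of the Teichm\"uller--Wittich--Belinskii theorem and requires a careful length--area computation handling a dilatation that is merely weightedly small (not uniformly small) near $0$; everything else is comparatively routine once this is in hand.
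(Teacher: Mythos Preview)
The paper does not give a proof of this theorem. It appears inside a remark and is simply \emph{cited} as Theorem~7.1 of Lehto--Virtanen \cite{LV-73}, p.~232, then used as a black box to deduce Proposition~\ref{PrMOV} and to observe that it yields short proofs of Theorem~MOV and Theorem~ASP. So there is no ``paper's own proof'' to compare against.

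On the merits, your strategy is the standard one and coincides with how Lehto--Virtanen proceed: the affine change $L(\zeta)=\zeta+\chi_0\bar\zeta$ together with the composition law for complex dilatations correctly reduces the claim to the case $\chi_0=0$, which is precisely the Teichm\"uller--Wittich--Belinskii theorem. Your outline of that theorem is along the right lines for the radial part --- the ring-modulus estimate does give $|g(z)|/|z|\to c\in(0,\infty)$ --- but the control of $\arg(g(z)/z)$ is where the substance lies, and the passage ``solving it via the Cauchy transform and the Beurling--Ahlfors operator \dots one gets that $\sigma$ extends continuously to $0$'' is a placeholder rather than an argument: you have not explained why the singular inhomogeneity $\mu_g/z$ in $\sigma_{\bar z}=\mu_g(\sigma_z+1/z)$ produces a bounded, indeed continuous, contribution near $0$ under only the hypothesis $\int|\mu_g|\,|z|^{-2}\,dA<\infty$. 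In the classical treatment the angular limit is obtained by a second extremal-length argument (for families of arcs rather than closed curves), not via integral operators. In short: the reduction to TWB is correct and complete; the TWB sketch identifies the right pieces but, as you yourself flag in the last paragraph, the quantitative step is the whole difficulty and is not actually carried out here.
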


If   $\chi$  satisfies  the condition   (\ref{regCon})  we say that   $\chi$  satisfies   the   integral growth  condition  at $z_0$, and   if
$\chi$  satisfies   the   integral growth  condition at every point  of $G$  we say that  $\chi$  satisfies   the   integral growth  condition  on $G$.

It is interesting that this result infer   simple proof  of a few results  including,   Theorem MOV,and Astala,  Prats, and  Saksman  Theorem 1.3  \cite{APE} stated here as

{\bf Theorem ASP}.  Let   $0 < s< 1$, let  $G$
 be a simply connected, bounded  $C^{1,s}$-domain and let  $g:G\rightarrow G$
be a $\mu$-quasiconformal mapping, with $supp (\mu) \subset \overline{G}$
and $\mu \in C^s(G)$. Then $g\in C^{1,s}(G)$.

Further from Theorem LK  we can also infer the following:
\begin{proposition}\label{PrMOV}
If  $f:\mathbb{C}\rightarrow \mathbb{C}$   is a $C^1$  qc  with  complex dilatation  $\chi$, where  $|\chi(z)|\leq k< 1 $  a.e. in $\mathbb{C}$,   $f(\infty)=\infty$, and
$\chi$  satisfies   the   integral growth  condition  at  $\mathbb{C}$,
then $f$ is Bi-Lip  on every compact  subset  of  $\mathbb{C}$.

If in addition  $supp \chi$  is bounded set then    $f$ is Bi-Lip   on  $\mathbb{C}$.
\end{proposition}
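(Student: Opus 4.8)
The plan is to reduce the statement to a soft fact about $C^1$ homeomorphisms, with Theorem~\ref{ThReg} as the single nontrivial input. First I would observe that the integral growth condition, assumed at \emph{every} point $z_0\in\mathbb{C}$, lets us invoke Theorem~\ref{ThReg} at each $z_0$: it gives that $f$ is regular at $z_0$, so $J_f(z_0)\neq 0$. Since $f$ satisfies the Beltrami equation with $|\chi|\leqslant k<1$, we have $J_f=|f_z|^2(1-|\chi|^2)\geqslant 0$ everywhere, hence in fact $J_f(z)>0$ for all $z\in\mathbb{C}$. Consequently $Df$ is continuous on $\mathbb{C}$ and nowhere singular, and — as $f$ is already a homeomorphism of $\mathbb{C}$ onto $\mathbb{C}$ — the inverse function theorem makes $f$ a $C^1$-diffeomorphism of $\mathbb{C}$ onto $\mathbb{C}$ with $Df^{-1}(f(z))=(Df(z))^{-1}$.

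Next I would prove bi-Lipschitz behaviour on a fixed compact set $F$. Choosing compact convex sets $K\supset F$ and $\widetilde K\supset f(F)$ (possible since $f$ is onto $\mathbb{C}$), the continuous function $\|Df\|$ is bounded on $K$, so the mean value theorem on the convex set $K$ gives $|f(z)-f(w)|\leqslant L|z-w|$ for $z,w\in F$; likewise $\|(Df)^{-1}\|$ is bounded on the compact set $f^{-1}(\widetilde K)$, so $f^{-1}$ is Lipschitz on $\widetilde K$, whence $|z-w|\leqslant M|f(z)-f(w)|$ for $z,w\in F$. Together these give that $f$ is bi-Lipschitz on $F$, which is the first assertion.

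For the second assertion, assume $\operatorname{supp}\chi\subset\overline{B(0,R)}$, so that $\chi\equiv 0$ and hence $f$ is conformal on $\{|z|>R\}$. Since the homeomorphic extension of $f$ to $\widehat{\mathbb{C}}$ fixes $\infty$, the map $g(w):=1/f(1/w)$ is holomorphic and injective on a punctured neighbourhood of $0$ with $g(0)=0$, so $c:=g'(0)\neq 0$; writing $w=1/z$ one gets $f(z)=z/c+O(1)$ and $f'(z)\to 1/c$ as $z\to\infty$. Thus $\|Df(z)\|=|f'(z)|$ stays bounded and $J_f(z)=|f'(z)|^2$ stays bounded away from $0$ for $|z|$ large, while on the remaining large disk $\|Df\|$ is continuous and $J_f$ is continuous and positive, hence bounded above and below there. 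In two real dimensions $\|(Df)^{-1}\|=\|Df\|/J_f$, so both $\|Df\|$ and $\|Df^{-1}\|$ are bounded on all of $\mathbb{C}$, and since $\mathbb{C}$ is convex the mean value theorem yields that $f$ and $f^{-1}$ are globally Lipschitz, i.e. $f$ is bi-Lipschitz on $\mathbb{C}$.

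The only genuinely nontrivial ingredient is Theorem~\ref{ThReg}, which converts the pointwise integral growth condition into pointwise non-vanishing of $J_f$; everything afterwards is the standard passage from a $C^1$-diffeomorphism to local — hence compact — bi-Lipschitz control. The step I expect to require the most care is the last one: one must check that the Lipschitz constants of $f$ and $f^{-1}$ do not deteriorate as $z\to\infty$, which is precisely where conformality outside a compact set together with $f(\infty)=\infty$ is used. Without the normalization $f(\infty)=\infty$ this control would be lost, so that hypothesis is essential for the global conclusion.
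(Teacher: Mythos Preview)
Your proposal is correct and matches the paper's indicated route: the paper does not spell out a proof but simply says the proposition ``can be inferred'' from Theorem~\ref{ThReg}, and your argument does precisely that---Theorem~\ref{ThReg} forces $J_f\neq 0$ everywhere, after which the $C^1$ hypothesis reduces everything to elementary diffeomorphism reasoning, with conformality near $\infty$ handling the global case. Nothing further is needed.
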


We can consider Proposition \ref{PrMOV} as a generalization of  Theorem MOV.

\end{remark}

{\it Acknowledge .}  We are indebted to professor Kalaj   who turned our attention on Theorem \ref{ThReg} and  an anonymous referee for careful reading the  manuscript and useful suggestion which improved exposition.

\medskip

\medskip
{\bf \noindent Miodrag Mateljevic} \\ University of Belgrade,
Faculty
of Mathematics \\
16 Studentski trg, P.O. Box 550\\
11001 Belgrade, SERBIA\\
miodrag@matf.bg.ac.rs

\medskip
{\bf \noindent Ruslan Salimov} \\
Institute of Mathematics of the NAS of
Ukraine,  \\
3, Tereschenkivska st., \\ 01024 Kiev-4,  UKRAINE,
\\
ruslan.salimov1@gmail.com

\medskip
{\bf \noindent Evgeny Sevost'yanov} \\
{\bf 1.} Zhytomyr Ivan Franko State University,  \\
40 Bol'shaya Berdichevskaya Str., \\ 10 008  Zhytomyr, UKRAINE \\
{\bf 2.} Institute of Applied Mathematics and Mechanics\\
of NAS of Ukraine, \\
1 Dobrovol'skogo Str., \\ 84 100 Slavyansk,  UKRAINE\\
esevostyanov2009@gmail.com


\begin{thebibliography}{99}

\bibitem[ARS]{ARS} {\sc Afanasieva, E., V.~Ryazanov. and R.~Salimov:}
{\it On mappings in the Orlicz-Sobolev classes on Riemannian
manifolds}. - Journal of Mathematical Sciences 181:1, 2012, 1--17.


\bibitem[Ahl$_1$]{Ahl$_1$} {\sc Ahlfors, L.:} {\it On quasiconformal mappings.} - J. Analyse Math., 3
1954, 1--58.

\bibitem[Ahl$_2$]{Ahl$_2$} {\sc Ahlfors, L.:} {\it M\"{o}bius transformation in several
dimensions.} - School of mathematics, University of Minnesota, 1981.

\bibitem[Ahl$_3$]{Ahl$_3$} {\sc Ahlfors, L. V.:} {\it Lectures on Quasiconformal mappings
with additional chapters by Earle and Kra,} Univ. Lectures Series
(Providence , R.I.),  Shishikura, Hubbard, 2006.

\bibitem[AM$_1$]{AM$_1$}  {\sc Arsenovi\'c, M. and  V.~Manojlovi\'c:} {\it On
the modulus of continuity of harmonic quasiregular mappings on the
unit ball in $\mathbb{R}^n.$} - Filomat 23(3), 2009, 199-202.


\bibitem[AKM$_1$]{AKM$_1$} {\sc Arsenovi\' c, M., V.~Koji\'c and M.~Mateljevi\' c :} {\it On Lipschitz continuity of
harmonic quasiregular maps on the unit ball in $R^n.$} - Ann. Acad.
Sci. Fenn. 33, 2008, 315-318.

\bibitem[AKM$_2$]{AKM$_2$} {\sc Arsenovi\' c, M.,  V.~Koji\'c  and M.~Mateljevi\' c :} {\it Lipschitz-type spaces and
harmonic mappings in the space.} - Ann. Acad. Sci. Fenn. 35:2, 2010, 379-387.
\bibitem[AM]{am20}  {\sc Arsenovi\' c M.  and  M. Mateljevi\' c}, {\it On Ahlfors-Beurling Operator}, Ukr. Math. Bull    18 (2021), No 3, 292-302.

\bibitem[AIM]{AIM} {\sc Astala, K., T.~Iwaniec, and  G.J.~Martin:} {\it Elliptic partial
differential equations and quasiconformal mappings in the plane.} -
Princeton University Press, Princeton and Oxford, 2009.
\bibitem[APE]{APE}  Astala K.,  M. Prats,  E. Saksman,    {\it  Global smoothness of quasiconformal mappings in the Triebel-Lizorkin scale},  Submitted on 23 Jan 2019, arXiv:1901.07844v1
\bibitem[BGR]{BGR} {\sc  Bojarski, B.,  V.~Gutlyanskii,  O.~Martio, and  V.~Ryazanov:}
{\it Infinitesimal geometry of quasiconformal and bi-Lipschitz
mappings in the plane.} EMS Tracts in Mathematics, vol. 19. -
European Mathematical Society (EMS), Z\"{u}rich, 2013.


\bibitem[BM]{BM} {\sc Bo\v zin,  V.,  M. Mateljevi\'c :}
{\it Quasiconformal and HQC Mappings Between Lyapunov Jordan
Domains.} Accepted: 2018-12-31, pp. 23, DOI Number: 10.2422/2036-2145.201708$_-$013, Ann. Sc. Norm. Super. Pisa Cl. Sci.(5) Vol. XXI (2020),107-132

\bibitem[Fe]{Fe} {\sc Federer, H.:} {\it  Geometric Measure Theory.}
- Springer, Berlin etc., 1969.

\bibitem[Cal]{Cal} {\sc Calderon A.P.:} {\it On the differentiability of absolutely
continuous functions}. - Riv. Math. Univ. Parma 2, 1951, 203--213.

\bibitem[Cr]{Cr} {\sc Cristea, M.:} {\it  Open discrete mappings having local $ACL^n$
inverses.} - Complex Variables and Elliptic Equations 55:1--3, 2010,
61--90.

\bibitem[Onof]{Onof2018}  {\sc  D'Onofrio  L.:}     {\it Differentiability versus approximate differentiability},
Electronic Journal of Differential Equations, Conference 25 (2018), pp. 77–85.ISSN: 1072-6691. URL: http://ejde.math.txstate.edu or http://ejde.math.unt.edu

\bibitem[GT]{GT} {\sc Gilbarg, D.,  N.~Trudinger:}
{\it Elliptic partial Differential  Equation of Second Order.} --
Springer Verlag, Berlin, Heidelberg, 2001.

\bibitem[GG]{GG} {\sc Gutlyanski\u{i}~V.Ya. and ~Golberg A.:}
{\it  On Lipschitz continuity of quasiconformal mappings in space.} - J.
d'Anal. Math.~109, 2009, 233--251.


\bibitem[HK]{HK} {\sc Hencl, S. and P.~Koskela:}
{\it Lectures on mappings of finite distortion.} - Lecture Notes in
Mathematics 2096, Springer, 2014.

\bibitem[IM]{IM} {\sc  Iwaniec~T. and ~Martin G.:}
Geometrical Function Theory and Non-Linear Analysis. - Claren\-don
Press, Oxford, 2001.

\bibitem[IS$_1$]{IS$_1$} {\sc Il'yutko~D.P. and E.A.~Sevost'yanov:}  {\it Open discrete
mappings with unbounded coefficient of quasi-conformality on
Riemannian manifolds.} - Sbornik Mathematics 207:4, 2016, 537--580.

\bibitem[IS$_2$]{IS$_2$} {\sc Il'yutko~D.P. and E.A.~Sevost'yanov:} {\it Boundary behaviour
of open discrete mappings on Riemannian manifolds.} - Sbornik
Mathematics 209:5, 2018, 605--651.


\bibitem[Ka$_1$]{Ka$_1$} {\sc  Kalaj, D.:}  {\it Quasiconformal harmonic mapping
between Jordan domains.} -  Math. Z. 260:2, 2008, 237-252.

\bibitem[Ka$_2$]{Ka$_2$} {\sc Kalaj, D.:} {\it  On Kellogg's theorem for quasiconformal mappings.} - Glasgow
Mathematical Journal 54:3, 2012, 599-603.

\bibitem[Ka$_3$]{Ka$_3$} {\sc Kalaj, D.:} {\it  A priori estimate of gradient of a solution to certain
differential inequality and quasiregular mappings.} - Journal
d'Anal. Math. 119:1, 2013, 63-88.

\bibitem[Ka$_4$]{CorrGlasg}
 D. Kalaj,   {\it Corrigendum  to the  paper David Kalaj: On Kellogg's theorem for quasiconformal mappings}, Glasgow Mathematical Journal /Volume 54 /Issue 03/ September 2012, pp 599-603,
 Glasgow Math. J.: 2020. doi:10.1017/S001708952000062

\bibitem[KM]{KM} {\sc Kalaj, D. and M.~Mateljevi\'c:} {\it
Inner estimate and quasiconformal harmonic maps between smooth
domains.} - Journal d'Anal. Math. 100, 2006, 117-132.

\bibitem[KR]{KR} {\sc Kovtonyuk, D. and V.~Ryazanov:} {\it  New modulus estimates
in Orlicz-Sobolev classes.} - Annals of the University of Bucharest
(mathematical series) LXIII: 5, 2014, 131--135.

\bibitem[KRSS]{KRSS} {\sc Kovtonyuk, D., V.~Ryazanov, R.~Salimov and E.~Sevost'yanov:} {\it  Toward the theory
of Orlicz-Sobolev classes.} - St. Petersburg Math. J. 25:6, 2014,
929--963.

\bibitem[LV]{LV-73} {\sc Lehto, O. and K.~Virtanen:} {\it  Quasiconformal Mappings
in the Plane.} -  Springer: New York etc., 1973.

\bibitem[LSS]{LSS} {\sc Lomako, T.,  R.~Salimov  and E.~Sevostyanov :}
On equicontinuity of solutions to the Beltrami equations. -  Annals
of the University of Bucharest (mathematical series) LIX:2, 2010,
263--274.

\bibitem[MV]{MV}
{\sc Mateljevic, M. and M. Vuorinen:} {\it On harmonic
quasiconformal quasi-isometries.} - J. Inequal., 2010:1, 2010, 1-19.

\bibitem[M$_1$]{M$_1$} {\sc Mateljevi\' c,  M.:}
{\it Distortion of quasiregular mappings and equivalent norms on
Lipschitz-type spaces.} -- Abstract and Applied Analysis 2014, 2013,
1-20.

\bibitem[M$_2$]{M$_2$} {\sc Mateljevi\'c, M.:}
{\it The Lower Bound for the Modulus of the Derivatives and Jacobian
of Harmonic Injective Mappings.} -  Filomat 29:2, 2015, 221-244.

\bibitem[M$_3$]{M$_3$} {\sc  Mateljevi\' c,  M.:}
{\it Invertible harmonic and Harmonic quasiconformal mappings.} -
Filomat 29:9, 2015, 1953-1967.
\bibitem[M$_4$]{mm.Spring2021} {\sc  Mateljevi\'c, M.:},    {\it  Boundary behaviour of  partial derivatives for  solutions to    certain Laplacian-gradient inequalities   and  spatial qc maps},
accepted for publication in Springer Proceedings in Mathematics \&
Statistics (ISSN: 2194-1009, https://www.springer.com/series/10533 ), vol.357, Operator Theory and Harmonic Analysis.
Vol. 1: New General Trends and Advances of the Theory 2021, 393-418
\bibitem[MRV]{MRV} {\sc Martio, O.,  S.~Rickman and  J.~V\"{a}is\"{a}l\"{a}:}
{\it  Distortion and singularities of quasiregular mappings.} - Ann.
Acad. Sci. Fenn. Ser. A1 465, 1970, 1--13.

\bibitem[Mar]{Mar} {\sc Martio, O.:} {\it On harmonic quasiconformal mappings}, Ann.
Acad. Sci. Fenn., Ser. A I 425 (1968), 3-10.


\bibitem[MRSY$_1$]{MRSY$_1$} {\sc Martio, O., V.~Ryazanov, U.~Srebro, and
E.~Yakubov:} {\it  Moduli in Modern Mapping Theory}. -- Springer Science +
Business Media, LLC: New York, 2009.

\bibitem[MRSY$_2$]{MRSY$_2$} {\sc Martio, O., V.~Ryazanov, U.~Srebro, and
E.~Yakubov:} {\it  On $Q$-ho\-me\-o\-mor\-phisms.} - Ann. Acad. Sci.
Fenn. Math. 30:1, 2005, 49--69.

\bibitem[MOV]{MOV} {\sc Mateu, J.,  J.~Orobitg, and  J.~Verdera:}
{\it Extra cancellation of even Calder\'{o}n-Zygmund operators and
quasiconformal mappings.} --  J.  Math.  Pures Appl.~91: 4, 2009,
402-431.

\bibitem[Ma]{Ma} {\sc Maz'ya, V.:} {\it Sobolev classes.} - Springer, Berlin--New York, 1985.

\bibitem[M]{M} {\sc Mori, A.:} {\it  On an absolute constant in the theory of quasiconformal mappings.} - J. Math.
Soc. Japan 8, 1956, 156--166.


\bibitem[NO]{NO} {\sc Nodler, C.A. and  D.M.~Oberlin:} {\it Moduli of continuity and a Hardy-Littlewood
theorem.} - Lecture Notes in Math. 1351, p.~265-272,
Springer-Verlag, Berlin etc., 1988.


\bibitem[RSS]{RSS} {\sc Ryazanov, V., R.~Salimov and E.~Sevost'yanov:} {\it  On
the H\"{o}lder property of mappings in domains and on boundaries}. -
Ukr. Mat. Visnyk 16:3, 2019, 383-402 (in Russian).

\bibitem[RSY$_1$]{RSY$_1$} {\sc Ryazanov, V., U. Srebro and E. Yakubov:} {\it Plane
mappings with dilatation dominated by func\-tions of bounded mean
oscillation.} - Sib. Adv. in Math. 11:2, 2001, 94--130.

\bibitem[RSY$_2$]{RSY$_2$} {\sc Ryazanov, V., U. Srebro and E. Yakubov:} {\it On ring solutions of Beltrami
equations.} - J. d'Anal. Math. 96, 2005, 117--150.


\bibitem[RS]{RS} {\sc Ryazanov, V. and E.~Sevost'yanov:} {\it  Toward the theory of ring
$Q$-homeomorphisms.}  - Israel J. Math. 168, 2008, 101--118.

\bibitem[Re]{Re} {\sc Reshetnyak, Yu.G.:}
Space mappings with bounded distortion. - Transl. Math. Monographs 73, AMS, 1989.

\bibitem[Sa]{Sa} {\sc Saks, S.:} {\it Theory of the Integral.} - Dover, New York, 1964.

\bibitem[Sal]{Sal} {\sc Salimov, R.R.:} {\it On regular homeomorphisms in the
plane.} - Annales Academi{\ae} Scientiarum Fennic{\ae} Mathematica 35, 2010, 285-–289.


\bibitem[Sev$_1$]{Sev$_1$} {\sc Sevost'yanov, E.A.:} {\it On the boundary behavior of
open discrete mappings with unbounded characteristic.} - Ukrainian
Math. J. 64:6, 2012, 979--984.

\bibitem[Sev$_2$]{Sev$_2$} {\sc Sevost'yanov, E.A.:}
{\it Towards a theory of removable singularities for maps with unbounded
characteristic of quasi-conformity.} - Izv. Math. 74:1, 2010,
151--165.

\bibitem[Sev$_3$]{Sev$_3$} {\sc  Sevost'yanov, E.:}
{\it Investigation of space mappings by geometric method.} -  Kiev:
Naukova Dumka, 2014, (in Russian).

\bibitem[Sm]{Sm} {\sc Smolovaya, E.S.:}  {\it Boundary behavior of ring $Q$-homeomorphisms in metric
spaces.} - Ukr. Mat. Zh., 62:5, 2010, 682--689 (in Russian); English
transl. in Ukr. Math. Journ., 62:5, 2010, 785--793.

\bibitem[Suv]{Suv} {\sc Suvorov, G.D.:}
{\it Generalized principle of length and area in mapping theory.} -
Naukova Dumka: Kiev, 1985.

\bibitem[Va$_1$]{Va$_1$} {\sc V\"{a}is\"{a}l\"{a}, J.:}
{\it Two new characterizations for quasiconformality.} - Ann. Acad. Sci.
Fenn. Ser. A 1 Math. 362, 1965, 1--12.

\bibitem[Va$_2$]{Va$_2$} {\sc V\"{a}is\"{a}l\"{a}, J.:}
{\it  Lectures on $n$-dimensional quasiconformal mappings.} - Lecture Notes
in Math. 229, Springer-Verlag, Berlin etc., 1971.

\bibitem[Vec]{Vekua} {\sc  Vekua I.N.:}  {\it Generalized analytic functions}.(Russian).2nd ed., revised, Nauka, Moskow, 1988, 509 p;
I. N. Vekua,  Generalized Analytic Functions Paperback – April 10, 2014,
Ian Sneddon (Translator).

\bibitem[Zy]{Zy} {\sc Zygmund,  A.:} {\it Trigonometrical Series,} $2^{\rm {nd}}$ ed., Vol. I,
II. - Cambridge university Press, New York, 1959.




\end{thebibliography}
\end{document}